\newcounter{Aidanscounter}
\newcommand{\saveenumi}{\setcounter{Aidanscounter}{\value{enumi}}}
\newcommand{\restoreenumi}{\setcounter{enumi}{\value{Aidanscounter}}}
\newcommand{\id}{\operatorname{id}}
\newcommand{\dom}{\operatorname{dom}}
\newcommand{\cod}{\operatorname{cod}}
\newcommand{\Obj}{\operatorname{Obj}}
\newcommand{\Hom}{\operatorname{Hom}}
\newcommand{\lsp}{\operatorname{span}}
\newcommand{\End}{\operatorname{End}}
\newcommand{\Aut}{\operatorname{Aut}}
\newcommand{\field}[1]{\mathbb{#1}}
\newcommand{\CC}{\field{C}}
\newcommand{\NN}{\field{N}}
\newcommand{\TT}{\field{T}}
\newcommand{\ZZ}{\field{Z}}
\newcommand{\Cc}{\mathcal{C}}
\newcommand{\Ee}{\mathcal{E}}
\newcommand{\Gg}{\mathcal{G}}
\newcommand{\Hh}{\mathcal{H}}
\newcommand{\Kk}{\mathcal{K}}
\newcommand{\Ll}{\mathcal{L}}
\newcommand{\Mm}{\mathcal{M}}
\newcommand{\Oo}{\mathcal{O}}
\newcommand{\Uu}{\mathcal{U}}
\newcommand{\Tt}{\mathcal{T}}
\newcommand{\corresp}[3]{%
    \ensuremath{\sideset{_{#1}}{_{#3}}{\operatorname{\mbox{$#2$}}}}
}%
\newcommand{\LCHS}{\mathcal{T}}
\newcommand{\Lmin}{\Lambda^{\operatorname{min}}}
\newcounter{mycounter}
\def\clsp{\overline{\operatorname{span}}}
\newtheorem{thm}{Theorem}[subsection]
\newtheorem{cor}[thm]{Corollary}
\newtheorem{lem}[thm]{Lemma}
\newtheorem{prop}[thm]{Proposition}
\theoremstyle{definition}
\newtheorem{dfn}[thm]{Definition}
\theoremstyle{remark}
\newtheorem{rmk}[thm]{Remark}
\newtheorem{example}[thm]{Example}
\newtheorem{examples}[thm]{Examples}
\newtheorem*{examples*}{Examples}
\newtheorem{ntn}[thm]{Notation}
\numberwithin{equation}{subsection}
\title[Graphs of $C^*$-correspondences]{Graphs of $C^*$-correspondences and Fell bundles}
\author{Valentin Deaconu}
\address{Valentin Deaconu, Alex Kumjian\\ Department of Mathematics (084)\\ University
of Nevada\\ Reno NV 89557-0084\\ USA} \email{vdeaconu, alex@unr.edu}
\author{Alex Kumjian}
\author{David Pask}
\address{David Pask, Aidan Sims\\ School of Mathematics and
Applied Statistics  \\
University of Wollongong\\
NSW  2522\\
AUSTRALIA} \email{dpask, asims@uow.edu.au}
\author{Aidan Sims}
\keywords{$C^*$-algebra; Graph algebra; $k$-graph; $C^*$-correspondence; Groupoid; Fell bundle; Product system; Cuntz-Pimsner algebra.}
\subjclass{Primary 46L05.}
\thanks{This research was supported by the Australian Research Council, the Fields Institute, and  BIRS}
\date{30 December 2008; revised 17 February 2009.}
\begin{document}
\begin{abstract}
We define the notion of a $\Lambda$-system of
$C^*$-correspondences associated to a higher-rank graph
$\Lambda$. Roughly speaking, such a system assigns to each
vertex of $\Lambda$ a $C^*$-algebra, and to each path in
$\Lambda$ a $C^*$-correspondence 
in  a way which carries compositions of paths to balanced tensor
products of $C^*$-correspondences. Under some simplifying
assumptions, we use Fowler's technology of Cuntz-Pimsner
algebras for product systems of $C^*$-correspondences to
associate a $C^*$-algebra to each $\Lambda$-system. We then
construct a Fell bundle over the 
path groupoid $\Gg_\Lambda$ and show that the $C^*$-algebra of the
$\Lambda$-system coincides with the reduced cross-sectional
algebra of the Fell bundle. We conclude by discussing several
examples of our construction arising in the literature.
%
\end{abstract}

\maketitle

\section{Introduction}

The Cuntz-Krieger algebras introduced in  \cite{CK1980} in 1980
were considered the $C^*$-analogues of type III factors, and
formed a bridge between symbolic dynamics and operator
algebras. These algebras have since been generalised in various
ways including discrete graph algebras \cite{KPRR},
Cuntz-Pimsner algebras \cite{Pim}, topological graph algebras
\cite{Katsura2004}, and higher rank graph algebras \cite{KP}.
Between them, these generalisations include large classes of
classifiable $C^*$-algebras, like
AF-algebras~\cite{Drinen2000},
A${\TT}$-algebras~\cite{PRRS2006}, crossed products by
${\ZZ}^k$~\cite{FPS, KP2003}, and Kirchberg algebras~\cite{RS,
Spielberg2007}.

In the current paper, we will be interested in a further
generalisation of this theory based on structures which we call
$\Lambda$-systems of $C^*$-correspondences, and on the
relationship between this construction and Fell bundles over
groupoids. Our construction contains elements of both
higher-rank graph $C^*$-algebras and of Cuntz-Pimsner algebras,
so we must digress a little to discuss both before describing
our results.

Building on the theory of graph $C^*$-algebras introduced in
\cite{EW, KPRR}, higher-rank graphs, or $k$-graphs, and their
$C^*$-algebras were developed in \cite{KP} to provide a
graph-based model for the Cuntz-Krieger algebras of Robertson
and Steger \cite{RS}. They have subsequently attracted
widespread research interest (see, for example,
\cite{pp_DY2008, Evans, FMY, Hopenwasser2005, KP2006,
pp_PP2008, PZ2005, SZ2008}).

A $k$-graph is a kind of $k$-dimensional graph, which one
visualises as a collection $\Lambda^0$ of vertices together
with $k$ collections of edges $\Lambda^{e_1}, . . . ,
\Lambda^{e_k}$ which we think of as lying in $k$ different
dimensions. As an aid to visualisation, we distinguish the
different types of edges using $k$ different colours. The
higher-dimensional nature of a $k$-graph is encoded by the
factorisation property which implies  that each path
consisting of two edges of different colours can be
re-factorised in a unique way with the order of the colours
reversed. When $k=1$, we obtain an ordinary directed graph, and
the definition of the higher-rank graph $C^*$-algebra
given by Kumjian and Pask then reduces to that of the
 graph $C^*$-algebra \cite{KPRR}. However, for $k
\ge 2$, there are many $k$-graph algebras which do not arise as
graph algebras. For example, the original work of Robertson and
Steger on higher-rank Cuntz-Krieger algebras describes numerous
$2$-graphs $\Lambda$ for which $C^*(\Lambda)$ is a Kirchberg
algebra and $K_1(C^*(\Lambda))$ contains torsion.

Cuntz-Pimsner algebras, originally introduced by Pimsner in
\cite{Pim} and since studied by many authors (see, for example,
\cite{pp_AA2005, BR2006, Carlsen2008, De2, FMR2003, FR1999, Katsura2004a, pp_KL2007, MS1998b, Sch}) are a common
generalisation of graph $C^*$-algebras and of crossed products
by $\ZZ$. Let $B$ be a $C^*$-algebra and let  $X$ be a right Hilbert $B$-module.
If a second $C^*$-algebra $A$ acts by adjointable operators on $X$, we refer to $X$ as a
$C^*$-correspondence from $A$ to $B$. The prototype arises from
a $C^*$-homomorphism $\phi : A \to B$; there is then a
$C^*$-correspondence ${_{\phi}B}$ from $A$ to $B$ which is
equal to $B$ as a Banach space, has inner product
$\langle x,y \rangle_B := x^*y$ for $x,y \in
{_\phi B}$, and has operations given by $a
\cdot x \cdot b = \phi(a)xb$ for $a \in A$, $x \in {_\phi B}$,
and $b \in B$. A $C^*$-correspondence from $A$ to itself is called
a $C^*$-correspondence over $A$. Pimsner's
construction associates to each $C^*$-correspondence over $A$ a
$C^*$-algebra $\Oo_X$ which (under mild hypotheses) contains an
isomorphic copy $i_A(A)$ of $A$ and an isometric copy $i_X(X)$
of $X$, and in which the operations in $X$ are implemented
$C^*$-algebraically.

In this paper we seek to combine elements of these two
constructions. Our  model is the situation of $\Gamma$-systems
of $k$-morphs introduced in \cite{KPS2} to unify various
constructions of higher rank graphs. A $k$-morph is the
analogue at the level of $k$-graphs of a $C^*$-correspondence
between $C^*$-algebras. For a precise definition, see Example \ref{ex:systems}(ii).

One of the main results of \cite{KPS2} is that there is a
category $\Mm$ whose objects are $k$-graphs and whose morphisms
are isomorphism classes of $k$-morphs, and there is a functor
$\big(\Lambda \mapsto C^*(\Lambda),  [W] \mapsto [\Hh(W)]\big)$
from $\Mm$ to the category $\Cc$ whose objects are
$C^*$-algebras and whose morphisms are isomorphism classes of
$C^*$-correspondences. Given an $\ell$-graph $\Gamma$, a
$\Gamma$-system of $k$-morphs is a collection $\{\Lambda_v :
v\in\Gamma^0\}$ of $k$-graphs connected by $k$-morphs
$\{W_\gamma:\gamma\in\Gamma\}$ satisfying appropriate
compatibility conditions. This gives rise to a family indexed
by $\gamma \in \Gamma$ of
$C^*(\Lambda_{r(\gamma)})$--$C^*(\Lambda_{s(\gamma)})$
$C^*$-correspondences $\Hh(W_\gamma)$. The $\Gamma$-system also
gives rise to a $(k+l)$-graph $\Sigma$ called the
$\Gamma$-bundle for the system, and this $\Sigma$ encodes all
the information in the $\Gamma$-system. The $C^*$-algebra
$C^*(\Sigma)$ contains isomorphic and mutually orthogonal
copies of the $C^*(\Lambda_v)$ and isometric copies of the
$\Hh(W_\gamma)$, so it has the flavour of a Cuntz-Pimsner
algebra for the system of correspondences arising from the
$\Gamma$-system. Indeed, when $\Gamma$ is the graph consisting
of just one vertex $v$ and one loop $e$, Theorem~6.8
of~\cite{KPS2} shows that $C^*(\Sigma) \cong \Oo_{\Hh(W_e)}$.

In this work, we generalize this idea, defining a system  $(A,X,\chi)$  of
$C^*$-correspondences over a $k$-graph $\Lambda$,
by associating a $C^*$-algebra $A_v$ to each vertex
$v\in\Lambda^0$, a $A_{r(\lambda)}$--$A_{s(\lambda)}$
$C^*$-correspondence $X_\lambda$ to each  $\lambda\in\Lambda$,
and a compatibility isomorphism $\chi_{\lambda,\mu} : X_\lambda
\otimes_{A_{s(\lambda)}} X_\mu \to X_{\lambda\mu}$ to each
composable pair $\lambda,\mu$ of paths. The case $k=1$ is the easiest,
since we may always take each $X_\lambda$ to be
$X_{\lambda_1}\otimes \cdots \otimes X_{\lambda_n}$, where
$\lambda=\lambda_1\cdots\lambda_n$ with $d(\lambda_i)=1$,
and  the compatibility
isomorphisms $\chi_{\lambda,\mu}$ to be the canonical ones. Under a number of
simplifying assumptions (see Definition~\ref{dfn:regular}), we
define a $C^*$-algebra $C^*(A,X,\chi)$ by first constructing
from $(A,X,\chi)$ a product system $Y$ of
$C^*$-correspondences over $\NN^k$, and then tapping into Fowler's theory of
Cuntz-Pimsner algebras for such product systems \cite{F99}.
Under the same simplifying hypotheses, we then construct a Fell
bundle $E_X$ over the graph groupoid $\Gg_\Lambda$ developed in
\cite{KP} such that $C^*_r(\Gg_\Lambda, E_X)$ and
$C^*(A,X,\chi)$ are isomorphic. Our main results are: a version
of the gauge-invariant uniqueness theorem for $C^*(A,X,\chi)$
(Theorem~\ref{thm:giut}); the construction of the Fell bundle
$E_X$ itself (Theorem~\ref{thm:fellout}); and the isomorphism
between $C^*(A,X,\chi)$ and the reduced cross-sectional algebra
$C^*_r(\Gg_\Lambda, E_X)$ (Theorem~\ref{thm:isomorphism}).

The relationship between between product systems of
$C^*$-correspondences and $k$-graphs was previously
investigated in \cite{RS2005}. There the authors show that
given a $k$-graph $\Lambda$ there is a product system
$Y_\Lambda$ over $\NN^k$ whose coefficient algebra is
$C_0(\Lambda^0)$ and whose fibre over $n$ is the completion of
$C_c(\Lambda^n)$ under an appropriate $C_0(\Lambda^0)$-valued
inner product (the $\Lambda^n$ are given the discrete
topology). This relates to $\Lambda$-systems as follows. Let
$X$ be the $\Lambda$-system with $X_\lambda = \CC$ for all
$\lambda$ and with the $\chi_{\lambda,\mu}$ determined by
multiplication. Then the product system $Y$ described in the
preceding paragraph is precisely the $Y_\Lambda$ arising in
\cite{RS2005}; in particular, it follows from Theorem~4.2 of
\cite{RS2005} that our $\Oo_Y$ coincides with $C^*(\Lambda)$ under
our regularity hypotheses, which in this situation just boil
down to the requirement that $\Lambda$ is row-finite and has no
sources.

Our construction is also consistent with the example of
$\Gamma$-systems of $k$-morphs. Given an $\ell$-graph $\Gamma$
and a $\Gamma$-system $W$ of $k$-morphs in the sense of
\cite{KPS2}, there is a $\Gamma$-system $X = (A_v , X_\gamma,
\chi )$ of $C^*$-correspondences, where $A_v = C^*(\Lambda_v)$
and $X_\gamma$ is equal to the $C^*$-correspondence
$\Hh(W_\gamma)$ constructed in \cite[Proposition~6.4]{KPS2}.
Moreover, $C^*(A,X,\chi) \cong C^* (\Sigma )$ where $\Sigma$ is
the $\Gamma$-bundle described above associated to the
$\Gamma$-system of $k$-morphs $W_\gamma$.

Our construction is quite general, and includes a number of
diverse situations studied by a variety of authors in recent
papers. We will briefly discuss here how our work relates to
four such situations; we present the details of these examples
as well as a number of others in Section~\ref{sec:examples}.

The first situation covered by our construction which we
mention here is Cuntz's study of twisted tensor products
\cite{Cu}. Cuntz considers a $C^*$-algebra $A\times_{\mathcal
U}{\mathcal O}_n$, where $A$ is a $C^*$-algebra, $\Oo_n$ is the
Cuntz algebra, and  $\Uu=(U_1,...,U_n)$ is a family of unitaries
implementing automorphisms $\alpha_i$ of $A$. This defines a
system of $C^*$-correspondences over the $1$-graph $B_n$ with
one vertex $v$ and $n$ loop-edges $e_1, \dots, e_n$ based at
$v$, whose $C^*$-algebra $C^*(B_n)$ is canonically isomorphic
to $\Oo_n$. The $C^*$-algebra which we associate to this $B_n$-system
coincides with Cuntz's twisted tensor product.

Our construction also generalises the situation considered in
Section~5.3 of \cite{PWY}. There Pinzari, Watatani and Yonetani
study KMS states on a $C^*$-algebra constructed from a family
of compatible $C^*$-correspondences $X_{i,j}$, one for each non-zero
entry in  a finite
$\{0,1\}$-matrix
$\Sigma=(\sigma_{i,j})\in M_n(\{0,1\})$. The $C^*$-algebra
they associate to these data is the Cuntz-Pimsner algebra of
$\bigoplus_{\sigma_{i,j} = 1} X_{i,j}$. Let $\Lambda$ be the $1$-graph
with a vertex $v_i$ for
each $1 \le i \le n$ and an edge $e_{i,j}$ from $v_j$ to $v_i$
if and only if $\sigma_{i,j} = 1$. Then
$X_{e_{i,j}} := X_{i,j}$ determines a $\Lambda$-system
 of $C^*$-correspondences $(A,X,\chi)$. By construction, our $C^*(A,X,\chi)$
 coincides with the
Cuntz-Pimsner algebra studied by Pinzari-Watatani-Yonetani.

A third situation related to our work is that of Ionescu,
Ionescu-Watatani, and Quigg \cite{Ionescu2006, Ion, pp_IW2004,
Q} on $C^*$-algebras associated to Mauldin-Williams graphs. In
the setting studied by Ionescu \cite{Ion}, a Mauldin-Williams graph
consists of a directed graph $\Lambda$,
compact metric spaces $T_v$ associated to the vertices of $\Lambda$, and strict
contractions $\varphi_e : T_{s(e)} \to T_{r(e)}$ associated to the edges.
Let $A_v := C(T_v)$ for each vertex $v$. For each edge $e$,
 the induced homomorphism $\varphi^*_e : C(T_{r(e)}) \to
C(T_{s(e)})$ determines a $C^*$-correspondence $X_e :=
{_{\varphi^*_e} A_{s(e)}}$ from $A_{r(e)}$ to $A_{s(e)}$. These
correspondences determine a $\Lambda$-system $(A, X, \chi)$.
Quigg \cite{Q} considers a more general situation.

A fourth connection between our construction and the literature
arises in Katsura's realisation of  the Kirchberg algebras using
topological graph $C^*$-algebras. In \cite[section
3]{Katsura2008}, Katsura uses a family of topological graphs
which are fibered over discrete graphs to construct  all nonunital
Kirchberg algebras. We can interpret his construction in terms
of systems of $C^*$-correspondences over $1$-graphs, where each
vertex algebra is $C({\TT})$ and each $C^*$-correspondence is
constructed using two covering maps (see also \cite{De1}). 

\smallskip

Our paper is structured as follows. In
Section~\ref{sec:prelims} we collect basic facts about
$k$-graphs, $C^*$-correspondences, product systems and Fell
bundles, and we establish notation. In
Section~\ref{sec:systems} we define $\Lambda$-systems $(A,X,\chi)$ of
$C^*$-correspondences  and, for systems satisfying
a number of simplifying hypotheses which we refer to
collectively as regularity (see below), the associated
$C^*$-algebras $C^*(A,X,\chi)$. In Section~\ref{sec:theBundle}
we construct from each regular $\Lambda$-system $(A, X, \chi)$
a Fell bundle $E_X$ over the graph groupoid $\Gg_\Lambda$ and
prove that there is an isomorphism $C^*_r(\Gg_\Lambda,
E_X)\cong C^*(A,X,\chi)$. Section~\ref{sec:examples} is devoted
to a discussion of the examples outlined above amongst others.

As already mentioned, for all the $C^*$-algebraic results, we
restrict our attention to the \emph{regular} $\Lambda$-systems
such that the $k$-graph $\Lambda$ is row-finite and has no
sources (that is, for any degree in $\NN^k$ and any vertex $v$
of $\Lambda$, the set of paths with range $v$ and degree $n$ is
finite and nonempty), that the $C^*$-correspondences $X_\gamma$
are full and nondegenerate, and that left action of
 each $A_{r(\gamma)}$ on  $X_\gamma$ is implemented by an
injective homomorphism into the 
compacts. One good
reason for this is that if $\Lambda = T_k$ is the $k$-graph
with one vertex and one path of each degree (that is, $\Lambda$
is isomorphic as a category to $\NN^k$), then $\Lambda$-systems
are precisely the product systems of Hilbert bimodules over
$\NN^k$ considered in \cite{F99, SY}; in
particular, since this special case is not yet well understood,
it seems bootless to worry overmuch about non-regular
$\Lambda$-systems at this juncture.

\medskip

\subsection*{Acknowledgements} The authors wish to express
their gratitude for the financial support as well as the
stimulating atmosphere of the Fields Institute and of the Banff
International Research Station; this work was initiated during
our time at the Fields Institute as participants in the special
session entitled \emph{Structure theory for operator algebras}
in November 2007, and continued in Banff, during the {\em
C*-Algebras Associated to Discrete and Dynamical Systems}
workshop in January 2008. Much of the work was done when the
authors gathered at the University of Wollongong in
August---September 2008: VD and AK would like to thank DP and
AS for their warm hospitality and generous support during this
period.

\section{Preliminaries}\label{sec:prelims}

\subsection{Higher-rank graphs}\label{sec:hrgs}

We will adopt the conventions of \cite{KP, PQR} for $k$-graphs.
Given a nonnegative integer $k$, a \emph{$k$-graph} is a
nonempty countable small category $\Lambda$ equipped with a
functor $d :\Lambda \to \NN^k$ satisfying the
\emph{factorisation property}: for all $\lambda \in \Lambda$
and $m,n \in \NN^k$ such that $d( \lambda )=m+n$ there exist
unique $\mu ,\nu \in \Lambda$ such that $d(\mu)=m$, $d(\nu)=n$,
and $\lambda=\mu \nu$. When $d(\lambda )=n$ we say $\lambda$
has \emph{degree} $n$. We will use $d$ to
denote the degree functor in every $k$-graph in this paper; the
domain of $d$ is always clear from context.

For $k \ge 1$, the standard generators of $\NN^k$ are denoted
$e_1, \dots, e_k$, and for $n \in \NN^k$ and $1 \le i \le k$ we
write $n_i$ for the $i^{\rm th}$ coordinate of $n$.

Given a $k$-graph $\Lambda$, for $n \in \NN^k$, we write $\Lambda^n$ for $d^{-1}(n)$.
The \emph{vertices}
of $\Lambda$ are the elements of $\Lambda^0$. The factorisation
property implies that $o \mapsto \id_o$ is a bijection from the
objects of $\Lambda$ to $\Lambda^0$. We will frequently use
this bijection to silently identify $\Obj(\Lambda)$ with
$\Lambda^0$. The domain and codomain maps in the category
$\Lambda$ therefore become maps $s,r : \Lambda \to \Lambda^0$.
More precisely, for $\alpha \in\Lambda$, the \emph{source}
$s(\alpha)$ is the identity morphism associated with the object
$\dom(\alpha)$ and similarly, $r(\alpha) = \id_{\cod(\alpha)}$.

Note that a $0$-graph is then a countable category whose only
morphisms are the identity morphisms; we think of a $0$-graph as a
collection of isolated vertices.

For $u,v\in\Lambda^0$ and $E \subset \Lambda$, we write $u E$
for $E \cap r^{-1}(u)$ and $E v$ for $E \cap s^{-1}(v)$.
We say that $\Lambda$ is \emph{row-finite} and has \emph{no
sources} if $v\Lambda^n$ is finite and nonempty for all $v \in
\Lambda^0$ and $n \in \NN^k$.

Given $\mu, \nu\in\Lambda$, we say $\lambda$ is a {\em common extension}
of $\mu$ and $\nu$ if $\lambda=\mu\alpha=\nu\beta$ for some $\alpha, \beta\in\Lambda$.
This forces $d(\lambda)\ge d(\mu)\vee d(\nu)$. We say $\lambda$ is {\em minimal} if
$d(\lambda)=d(\mu)\vee d(\nu)$. We define
\[
\Lmin(\mu,\nu)=\{(\alpha, \beta): \mu\alpha=\nu\beta
\; \text{is a minimal common extension of $\mu$ and $\nu$}\}.
\]

Two important examples of $k$-graphs are the following. (1)
For $k \ge 1$ let $\Omega_k$ be the small
category with objects $\mbox{Obj} \,( \Omega_k) = \NN^k$, and
morphisms
$\Omega_k = \{ (m,n) \in \NN^k \times \NN^k : m \le n \}$;
the range and source maps are given by $r ( m , n ) = m$, $s ( m , n ) = n$.
Define $d : \Omega_k\rightarrow \NN^k$  by $d ( m , n ) = n - m$.
Then $\Omega_k$ is a $k$-graph.
(2) Let $T = T_k$ be the semigroup $\NN^k$ viewed as a small
category. If $d : T \rightarrow \NN^k$ is the identity map, then
$( T , d )$ is also a $k$-graph.


\subsection{The path groupoid of a higher rank graph}\label{sec:pathgpd}

In this section we summarise the construction and properties of
the path groupoid associated to a row-finite higher-rank graph
with no sources. For further details, see \cite{KP, PRY2001}.

By a \emph{$k$-graph morphism} from a $k$-graph $\Lambda$ to a
$k$-graph $\Gamma$, we mean a functor $f : \Lambda \to \Gamma$
which respects the degree maps.

\begin{dfn} \label{infpathdef}
Let $\Lambda$ be a row-finite $k$-graph with no sources.  We define the infinite path space of $\Lambda$ by
\[
\Lambda^\infty ~=~ \{ x : \Omega_k  \rightarrow \Lambda  : x \;
\mbox{is a $k$-graph morphism} \}.
\]
\noindent
For each $p \in {\NN}^k$ define $\sigma^p : \Lambda^\infty
\rightarrow \Lambda^\infty$ by $\sigma^p (x) (m,n) = x(m+p,n+p)$
for all $x \in \Lambda^\infty$ and $(m,n) \in \Omega_k$.
\end{dfn}
\noindent We define  $r:\Lambda^\infty\to \Lambda^0$ by $r(x)=x(0,0)$.
Our identification of vertices and objects in $k$-graphs identifies
$x(0,0)$ with $x(0)$, and each $x(n,n)$ with $x(n)$. For $v\in \Lambda^0$, set
\[
v\Lambda^\infty:=\{x\in \Lambda^\infty: r(x)=v\}.
\]
For $\lambda\in\Lambda$ and $z\in s(\lambda)\Lambda^\infty$, there is a unique $x\in\Lambda^\infty$ such that
$x(0,d(\lambda))=\lambda$ and $\sigma^{d(\lambda)}(x)=z$. We denote this infinite path $x$ by
$\lambda z$.
The cylinder sets
\[
Z(\lambda):=\{x\in \Lambda^\infty : x(0,d(\lambda))=\lambda\},
\]
where $\lambda\in \Lambda$ are a basis of compact open sets for a
Hausdorff topology on $\Lambda^\infty$. Note that $v\Lambda^\infty$ is just $Z(v)$.

\begin{dfn} \label{gpdef}
Let $\Lambda$ be a row-finite $k$-graph with no sources. Let
\[
{\mathcal G}_{\Lambda}: =  \{ ( x , n , y ) \in
\Lambda^\infty \times {\ZZ}^k \times \Lambda^\infty :
\sigma^\ell (x) = \sigma^m( y) , n = \ell - m\}.
\]
\noindent Define the range and source maps  $r, s : {\mathcal
G}_\Lambda \rightarrow \Lambda^\infty$ by $r (x , n , y ) = x$,
$s ( x , n , y ) = y$. For  $( x, n, y )$, $( y, \ell , z ) \in
{\mathcal G}_\Lambda$ set $( x , n , y ) (y, \ell , z ): = ( x ,
n + \ell , z )$,  and $( x, n, y )^{-1}: =  ( y , -n , x )$. Then
${\mathcal G}_\Lambda$ is a groupoid called the path groupoid of
$\Lambda$.
\end{dfn}

For $\lambda, \mu\in\Lambda$ with $s(\lambda)=s(\mu)$, we define
\[
Z(\lambda,\mu)=\{(\lambda z, d(\lambda)-d(\mu), \mu z)\in \Gg_\Lambda : z\in s(\lambda)\Lambda^\infty\}.
\]
The family $\Uu_\Lambda:=\{Z(\lambda,\mu):s(\lambda)=s(\mu)\}$ is a basis of compact open bisections
for a
topology under which
$\Gg_\Lambda$ becomes a Hausdorff  \' etale groupoid.
Moreover, for every $g = (x, n, y) \in {\mathcal G}_\Lambda$, the collection of sets
$Z(\lambda,\mu)$ such that there is $z \in \Lambda^\infty$ with $r(z)  = s(\lambda)$
satisfying $x = \lambda z$, $y = \mu z$ and $n = d(\lambda) - d(\mu)$, constitutes
a neighborhood basis for $g$.

Fix $\lambda_1, \lambda_2,\mu_1,\mu_2 \in \Lambda$ with
$s(\lambda_i) = s(\mu_i)$. Suppose that $(x,n,y) \in
Z(\lambda_1,\mu_1) \cap Z(\lambda_2,\mu_2)$. Then $x=\lambda_1z_1=\lambda_2z_2$ and $y=\mu_1z_1=\mu_2z_2$ for some $z_1,z_2\in \Lambda^\infty$.
The factorisation property forces $z_1=\alpha z$ and $z_2=\beta z$ for some
$(\alpha, \beta)\in \Lambda^{min}(\lambda_1,\lambda_2)$. We then have
\begin{equation}\label{eq:path factorisation}
\mu_1\alpha z=\mu_1z_1=y=\mu_2z_2=\mu_2\beta z
\end{equation}
so that $\mu_1\alpha=\mu_2\beta$
is a common extension of $\mu_1$ and $\mu_2$. Moreover that $(x,n,y)\in Z(\lambda_1,\mu_1)\cap Z(\lambda_2,\mu_2)$ forces
$d(\lambda_1) - d(\mu_1) = n = d(\lambda_2) - d(\mu_2)$, so
\begin{align*}
d(\alpha)
    &=(d(\lambda_1)\vee d(\lambda_2))-d(\lambda_1)=
    ((d(\mu_1)+n)\vee (d(\mu_2)+n))-(d(\mu_1)+n)=\\
    &=(d(\mu_1)\vee d(\mu_2))-d(\mu_1),
    \end{align*}
and similarly
\[d(\beta)=(d(\mu_1)\vee d(\mu_2))-d(\mu_2).
\]
In particular $d(\mu_1 \alpha)=d(\mu_1)\vee d(\mu_2)=d(\mu_2\beta)$, and combined with
\eqref{eq:path factorisation} this forces $(\alpha,\beta)\in\Lambda^{min}(\mu_1,\mu_2)$.
 It is easy to check that for any
$(\alpha,\beta) \in \Lmin(\lambda_1, \lambda_2) \cap
\Lmin(\mu_1,\mu_2)$ and any $z \in s(\alpha)\Lambda^\infty$, we
have $(\lambda_1\alpha z, d(\lambda_1) - d(\mu_1), \mu_1\alpha
z) \in Z(\lambda_1,\mu_1) \cap Z(\lambda_2,\mu_2)$. It is
likewise easy to check that if $(\alpha,\beta),
(\alpha',\beta')$ are distinct elements of
$\Lmin(\lambda_1,\lambda_2) \cap \Lmin(\mu_1,\mu_2)$, then
$\Lmin(\lambda_1\alpha, \lambda_2\alpha') = \emptyset$. We
conclude that
\begin{equation}\label{eq:cylinder intersection}
\begin{split}
Z(\lambda_1,\mu_1) \cap Z(\lambda_2,\mu_2)
    &= \bigsqcup_{(\alpha,\beta) \in \Lmin(\lambda_1,\lambda_2) \cap \Lmin(\mu_1,\mu_2)} Z(\lambda_1\alpha, \mu_1\alpha) \\
    &= \bigsqcup_{(\alpha,\beta) \in \Lmin(\lambda_1,\lambda_2) \cap \Lmin(\mu_1,\mu_2)} Z(\lambda_2\beta, \mu_2\beta)
\end{split}
\end{equation}

Let $p := (d(\lambda_1) \vee d(\lambda_2)) - d(\lambda_1)$.
Since
\[
Z(\lambda_1,\mu_1) = \bigsqcup_{\alpha \in s(\lambda_1)\Lambda^p} Z(\lambda_1\alpha,\mu_1\alpha),
\]
we also have
\begin{equation}\label{eq:cylinder setdifference}
\begin{split}
Z(\lambda_1,\mu_1) \setminus Z(\lambda_2,\mu_2)
    = \bigsqcup \{Z(\lambda_1\alpha, \mu_1\alpha) : {}& \alpha \in s(\lambda_1)\Lambda^p,\;
        \nexists \beta \text{ such that } \\
       & (\alpha,\beta) \in \Lmin(\lambda_1,\lambda_2) \cap \Lmin(\mu_1,\mu_2)\}.
\end{split}
\end{equation}

\begin{lem}\label{lem:topology decomp}
Any finite union $\bigcup^m_{i=1} Z(\lambda_i, \mu_i)$ of
elements of $\Uu_\Lambda$ can be expressed as a finite disjoint union
$\bigsqcup_{j=1}^n Z(\sigma_j, \tau_j)$ of elements of $\Uu_\Lambda$ in
such a way that  each $(\sigma_j,\tau_j)$ has the form
 $(\sigma_j,\tau_j)=(\lambda_{i(j)}\nu_j,\mu_{i(j)}\nu_j)$
for some $i(j)\le n$ and $\nu_j\in s(\lambda_{i(j)})\Lambda^0$.
\end{lem}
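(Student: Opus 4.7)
The plan is to proceed by induction on $m$. The base case $m=1$ is immediate: take $n=1$, $(\sigma_1,\tau_1)=(\lambda_1,\mu_1)$, and $\nu_1=s(\lambda_1)$.

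For the inductive step, suppose the result holds for any union of fewer than $m$ basic cylinders. Given $\bigcup_{i=1}^{m} Z(\lambda_i,\mu_i)$, I would split it as
\[
\Big(\bigcup_{i=1}^{m-1} Z(\lambda_i,\mu_i)\Big) \;\sqcup\; \Big(Z(\lambda_m,\mu_m) \setminus \bigcup_{i=1}^{m-1} Z(\lambda_i,\mu_i)\Big).
\]
The inductive hypothesis supplies a disjoint decomposition of the required form for the first summand, so it remains to express the second summand as a finite disjoint union of cylinders each of the form $Z(\lambda_m\nu,\mu_m\nu)$ with $r(\nu)=s(\lambda_m)$.

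For this second summand I would iterate \eqref{eq:cylinder setdifference}. Write
\[
Z(\lambda_m,\mu_m) \setminus \bigcup_{i=1}^{m-1} Z(\lambda_i,\mu_i) = \Big(\cdots\big(Z(\lambda_m,\mu_m)\setminus Z(\lambda_1,\mu_1)\big)\cdots\Big) \setminus Z(\lambda_{m-1},\mu_{m-1}).
\]
A single application of \eqref{eq:cylinder setdifference} expresses $Z(\lambda_m,\mu_m) \setminus Z(\lambda_1,\mu_1)$ as a finite disjoint union of cylinders $Z(\lambda_m\alpha_1,\mu_m\alpha_1)$. Applying \eqref{eq:cylinder setdifference} again to each such piece, now with second pair $(\lambda_2,\mu_2)$, produces a finite disjoint union of cylinders of the form $Z(\lambda_m\alpha_1\alpha_2,\mu_m\alpha_1\alpha_2)$; and so on. The crucial observation making the induction go through is that \eqref{eq:cylinder setdifference} extends only the \emph{first} pair of its two arguments, and does so by paths $\alpha$ starting at the source of that first pair; consequently the form $Z(\lambda_m\nu,\mu_m\nu)$ is preserved under each successive difference. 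After $m-1$ such iterations one arrives at the desired disjoint decomposition with $i(j)=m$ for each piece coming from this summand.

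The content of the lemma is essentially packaged in \eqref{eq:cylinder setdifference}, which is already in hand; the rest is straightforward inductive bookkeeping. I expect no genuine obstacle, only the mild care required to confirm that the pieces produced by successive differencing remain pairwise disjoint and retain the prescribed shape.
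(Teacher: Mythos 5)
Your proof is correct and follows essentially the same route as the paper: both disjointify the union by successively removing earlier cylinders and then reduce to the identity \eqref{eq:cylinder setdifference}, which does all the real work. The only cosmetic difference is that the paper writes the disjointification as $\bigsqcup_i\bigcap_{j<i}\big(Z(\lambda_i,\mu_i)\setminus Z(\lambda_j,\mu_j)\big)$ and so also invokes \eqref{eq:cylinder intersection}, whereas your iterated-differencing induction needs only \eqref{eq:cylinder setdifference}.
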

\begin{proof}
Fix a finite collection of pairs $\{(\lambda_i,\mu_i) : i =
1,\dots,n\}$ such that each $s(\lambda_i) = s(\mu_i)$. Then we
may write $\bigcup^n_{i=1} Z(\lambda_i, \mu_i)$ as the disjoint
union
\[
\bigcup^n_{i=1} Z(\lambda_i, \mu_i)
    = \bigsqcup^n_{i=1} \Big(\bigcap^{i-1}_{j=1} (Z(\lambda_i, \mu_i) \setminus Z(\lambda_j,\mu_j))\Big).
\]
Since intersection distributes over unions, it therefore
suffices to show that given basis sets $Z(\lambda, \mu)$ and
$Z(\sigma, \tau)$ in $\Uu_\Lambda$, the intersection $Z(\lambda, \mu)
\cap Z(\sigma, \tau)$ and the relative complement $Z(\lambda,
\mu) \setminus Z(\sigma, \tau)$ can both be written as finite
disjoint unions of elements of $\Uu_\Lambda$ of the desired form. These statements follow
from \eqref{eq:cylinder intersection}~and~\eqref{eq:cylinder setdifference}.
\end{proof}


\subsection{\texorpdfstring{$C^*$}{C*}-algebras associated to higher-rank
graphs}\label{sec:intro C*}

Given a row-finite $k$-graph $\Lambda$ with no sources,  a
Cuntz-Krieger $\Lambda$-family is a collection $\{t_\lambda :
\lambda \in \Lambda \}$ of partial isometries satisfying the
Cuntz-Krieger relations:
\begin{itemize}
\item $\{t_v : v \in \Lambda^0\}$ is a collection of
    mutually orthogonal projections;
\item $t_\lambda t_\mu = t_{\lambda \mu}$ whenever
    $s(\lambda) = r(\mu)$;
\item $t^*_\lambda t_\lambda = t_{s(\lambda)}$ for all
    $\lambda \in \Lambda$; and
\item $t_v = \sum_{\lambda \in v \Lambda^n} t_\lambda
    t^*_\lambda$ for all $v \in \Lambda^0$ and $n \in
    \NN^k$.
\end{itemize}
In~\cite{RS2005},  a family satisfying only the first three of
these relations is called a Toeplitz-Cuntz-Krieger
$\Lambda$-family. The $k$-graph $C^*$-algebra $C^*(\Lambda)$ is
the universal $C^*$-algebra generated by a Cuntz-Krieger
$\Lambda$-family $\{s_\lambda : \lambda \in \Lambda \}$. That
is, for every Cuntz-Krieger $\Lambda$-family $\{t_\lambda :
\lambda \in \Lambda \}$ there is a homomorphism $\pi_t$ of
$C^*( \Lambda )$ satisfying $\pi_t(s_\lambda) = t_\lambda$ for
all $\lambda \in \Lambda$.

By \cite[Theorem 3.15]{RSY1}, the generators
$s_\lambda$ of  $C^*(\Lambda)$ are
all nonzero.

If $\Lambda$ is a $0$-graph, then it trivially has no sources,
and the last three Cuntz-Krieger relations follow from the
first one. So $C^*(\Lambda)$ is the universal $C^*$-algebra
generated by mutually orthogonal projections $\{s_v : v \in
\Lambda^0\}$; that is $C^*(\Lambda) \cong c_0(\Lambda^0)$.

Let $\Lambda$ be a $k$-graph. A standard argument (see, for example,
\cite[Proposition 2.1]{Raeburn CBMS}) using the universal property
shows that there is a strongly continuous
action $\gamma$ of $\TT^k$ on $C^*(\Lambda)$, called the
\emph{gauge action}, such that $\gamma_z(s_\lambda) =
z^{d(\lambda)} s_\lambda$ for all $z \in \TT^k$ and $\lambda
\in \Lambda$.


\subsection{\texorpdfstring{$C^*$}{C*}-correspondences}\label{sec:correspondences}

We define Hilbert modules following \cite{Lan} and \cite[\S
II.7]{Black}. Let $B$ be a $C^*$-algebra and let $\Hh$ be a
right $B$-module. Then a {\em $B$-valued inner product} on
$\Hh$ is a function $\langle \cdot, \cdot\rangle_B : \Hh \times
\Hh \to B$ satisfying the following conditions for all $\xi,
\eta, \zeta \in \Hh$, $b \in B$ and $\alpha, \beta \in \CC$:
\begin{itemize}
\item $\langle \xi, \alpha\eta + \beta\zeta\rangle_B =
    \alpha\langle \xi, \eta\rangle_B + \beta\langle \xi,
    \zeta\rangle_B$,
\item $\langle \xi, \eta b\rangle_B = \langle \xi,
    \eta\rangle_B\cdot b$,
\item $\langle \xi, \eta\rangle_B = \langle \eta,
    \xi\rangle_B^*$,
\item  $\langle \xi, \xi\rangle_B \ge 0$,  and $\langle \xi,
    \xi\rangle_B = 0$ if and only if $\xi = 0$.
\end{itemize}
If $\Hh$ is complete with respect to the norm  $\| \xi
\|^2: =\| \langle \xi, \xi\rangle_B\|$, then $\Hh$ is said to be a
(right-) {\em Hilbert $B$-module}.  If the range of the inner
product is not contained in any proper ideal in $B$, $\Hh$ is
said to be {\em full}. Note that $B$ may be endowed with the
structure of a full Hilbert $B$-module by taking $\langle \xi,
\eta\rangle_B = \xi^*\eta$ for all $\xi, \eta \in B$.
Let $\Hh_1, \Hh_2$  be Hilbert $B$-modules;
a map $T: \Hh_1 \to \Hh_2$ is an {\em adjointable operator} if there is a
map $T^* : \Hh_2 \to \Hh_1$ such that $\langle T\xi, \eta\rangle_B
= \langle \xi, T^*\eta\rangle_B$ for all $\xi, \eta \in \Hh$.
Such an operator is necessarily linear and bounded. We denote the
collection of all  adjointable operators by $\Ll(\Hh_1, \Hh_2)$.
For $\xi_i \in \Hh_i$ there is a  rank-one adjointable operator
$\theta_{\xi_2, \xi_1} : \Hh_1 \to \Hh_2$ defined by
$\theta_{\xi_2, \xi_1}(\eta) = \xi_2\cdot \langle \xi_1, \eta\rangle_B$.
Note that $\theta_{\xi_2, \xi_1}^* = \theta_{\xi_1, \xi_2}$.
The closure of the span of such operators in $\Ll(\Hh_1, \Hh_2)$ is
denoted $\Kk(\Hh_1, \Hh_2)$ (the space of compact operators).
For a Hilbert $B$-module $\Hh$, both $\Kk(\Hh) = \Kk(\Hh, \Hh)$ and
$\Ll(\Hh) = \Ll(\Hh, \Hh)$  are $C^*$-algebras.   Moreover, $\Kk(\Hh)$ is
an essential ideal in $\Ll(\Hh)$, and  $\Ll(\Hh)$ may be
identified with the multiplier algebra of $\Kk(\Hh)$.
There is a canonical identification  $\Hh \equiv  \Kk(B, \Hh)$
which identifies $\xi \in \Hh$ with the operator $b \mapsto \xi b$.
With this identification we have the factorization
$\theta_{\xi_2, \xi_1} = \xi_2\xi_1^*$. Set $\Hh^*:=\Kk(\Hh,B)$.
We may regard $\Hh^*$ as a left-Hilbert $B$-module.

Let $A$ and $B$ be $C^*$-algebras; then a $C^*$-correspondence
from $A$ to $B$ or more briefly an $A$--$B$
$C^*$-correspondence is a Hilbert $B$-module $\Hh$ together
with a $*$-homomorphism $\phi : A \to \Ll(\Hh)$. We often suppress $\phi$, writing $a
\cdot \xi$ for $\phi(a)\xi$.
A homomorphism $\phi: A \to B$ becomes a homomorphism from
$A$ to $\Kk(B)=B$ when $B$ is regarded as a Hilbert $B$ module as above, and therefore gives
 $B$  the
structure of an $A$--$B$ $C^*$-correspondence
 denoted  ${}_\phi B$.
So it is natural to think of an $A$--$B$ $C^*$-correspondence
as a generalised homomorphism from $A$ to $B$.

A $C^*$-correspondence $\Hh$ is said to be {\em nondegenerate} if
$\overline{\text{span}}\,\{ \phi(a)\xi : a \in A,\; \xi \in \Hh \}=
\Hh$ (some authors have also called such $C^*$-correspondences
\emph{essential}). The above $C^*$-correspondence $_\phi B$ is nondegenerate
if and only if $\phi:A\to B$ is {\em approximately unital}, that is, $\phi$ maps  approximate
units into approximate units (note that by \cite[Theorem II. 7.3.9] {Black}
this condition implies that $\phi$ extends to a unital
map between the multiplier algebras).

As discussed in \cite{Black,EKQR, Landsman:bicategories, Sch},
there is a category $\Cc$ such that $\Obj(\Cc)$ is the class of
$C^*$-algebras, and $\Hom_{\Cc}(A, B)$ consists of all isomorphism
classes of $A$--$B$ $C^*$-correspondences (with identity
morphisms $[A]$). Composition
\[
\Hom_{\Cc}(B, C) \times \Hom_{\Cc}(A, B) \to
\Hom_{\Cc}(A, C)
\]
is defined by $([\Hh_1],[\Hh_2]) \mapsto [\Hh_2 \otimes_B
\Hh_1]$ where $\Hh_2 \otimes_B \Hh_1$ denotes the  balanced tensor
product of $C^*$-correspondences. This
$\Hh_2 \otimes_B \Hh_1$ is called the \emph{internal tensor
product} of $\Hh_2$ and $\Hh_1$ by Blackadar and the
\emph{interior tensor product} by Lance (see
\cite[II.7.4.1]{Black} and \cite[Prop.~4.5]{Lan} and the
following discussion).

Observe that any full right-Hilbert $B$-module $\Hh$ is
a nondegenerate $\Kk(\Hh)$--$B$ $C^*$-correspondence ($\phi$ is the inclusion map);
this is the basic example of an imprimitivity
 bimodule between  $\Kk(\Hh)$ and $B$.
 In this case, $\Kk(\Hh)$ and $B$
are said to be Morita-Rieffel equivalent.
Moreover,  $\Hh^*$ may also be viewed as a $B$--$\Kk(\Hh)$
imprimitivity bimodule.
Given two Hilbert $B$-modules $\Hh_1, \Hh_2$, there is
a natural isomorphism $\Kk(\Hh_2,\Hh_1)\to \Hh_1\otimes_B\Hh_2^*$
such that  $\theta_{\xi_1,\xi_2}\mapsto \xi_1\otimes \xi_2^*$.


\subsection{Representations of
\texorpdfstring{$C^*$}{C*}-correspondences}\label{sec:Pimsner}

Let $\Hh$ be an $A$--$A$ $C^*$-correspondence. Recall from
\cite{Katsura2004a, Pim},  that a representation of $\Hh$ in a $C^*$-algebra $B$
is a pair $(t, \pi)$ where $\pi : A \to B$ is a homomorphism,
$t : \Hh \to B$ is linear, and such that for all $a \in A$ and
$\xi, \eta \in \Hh$, we have $t(a\cdot \xi) = \pi(a)t(\xi)$,
$t(\xi \cdot a) = t(\xi)\pi(a)$, and $\pi(\langle \xi, \eta
\rangle_A) = t(\xi)^* t(\eta)$.

Given a $C^*$-correspondence $\Hh$ over $A$ and a
representation $(t, \pi)$ of $\Hh$ in $B$, there is a
homomorphism $t^{(1)} : \Kk(\Hh) \to B$ satisfying
$t^{(1)}(\theta_{\xi,\eta}) = t(\xi) t(\eta)^*$ for all
$\xi,\eta \in \Hh$ (Pimsner denotes this homomorphism
$\pi^{(1)}$ in \cite{Pim}). The pair $(t, \pi)$ is said to be
\emph{Cuntz-Pimsner covariant} if $t^{(1)} (\phi(a)) = \pi(a)$
for all $a\in \phi^{-1}(\Kk(\Hh))\cap(\ker\phi)^\perp$ (see
\cite[Definition 3.4]{Katsura2004a}).

In the cases of interest later in this paper, $\phi$ is injective and $\phi(A)\subset
\Kk(\Hh)$, so $t^{(1)} \circ \phi$ is a homomorphism from $A$
to $B$. Then the pair $(t, \pi)$ is Cuntz-Pimsner covariant if
$t^{(1)} \circ\phi = \pi$.

Given a $C^*$-correspondence $\Hh$ over $A$, there is a
Cuntz-Pimsner covariant representation $(j_\Hh, j_A)$ in a $C^*$-algebra $\Oo_\Hh$
which is universal in the sense that given another
Cuntz-Pimsner covariant representation $(t, \pi)$ of $\Hh$ in $B$ there is a unique
homomorphism $t \times \pi : \Oo_\Hh \to B$ satisfying $(t
\times \pi) \circ j_\Hh = t$ and $(t \times \pi) \circ j_A =
\pi$. Both $j_\Hh$ and $j_A$ are isometric. Moreover, $\Oo_\Hh$
is unique up to canonical isomorphism.
There is a strongly continuous gauge action
$\gamma:\TT\to \Aut(\Oo_\Hh)$
such that $\gamma_z(j_\Hh(\xi))=zj_\Hh(\xi)$ for $\xi\in\Hh$ and
$\gamma_z(j_A(a))=j_A(a)$ for $a\in A$.


\subsection{Product systems and representations}\label{sec:prodsys}
Let $(P, \cdot)$ be a discrete semigroup with identity $e$ and let $A$ be a $C^*$-algebra.
A {\em product system} of $A$--$A$  $C^*$-correspondences over $P$
is a semigroup $Y=\bigsqcup_{p\in P}Y_p$ such that
\begin{itemize}
\item for each $p\in P$, $Y_p\subset Y$ is a $A$--$A$ $C^*$-correspondence
with inner product $\langle\cdot,\cdot\rangle^p_A$;
\item the identity fiber $Y_e$ is the $A$--$A$ $C^*$-correspondence $_{id}A$;
\item for $p,q\in P\setminus\{e\}$ there is an isomorphism
$\Theta_{p,q}:Y_p\otimes_A Y_q\to Y_{pq}$ satisfying $\Theta_{p,q}(x\otimes_A y)=xy$
for all $x\in Y_p$ and $y\in Y_q$;
\item multiplication in $Y$ by elements of $Y_e=A$ implements the right and left actions of $A$ on each $Y_p$.
\end{itemize}
The homomorphism of $A$ into $\Ll(X_p)$ implementing the
left action on $Y_p$ is denoted  $\phi_p$. The product
system $Y$ is said to be {\em nondegenerate} if each $Y_p$ is a
nondegenerate correspondence. 

Let $B$ be a $C^*$-algebra, and let $Y$ be a nondegenerate product
system such that each $\phi_p$ is an injection into $\Kk(X_p)$.
 A map $\psi:Y\to B$ is called a representation
of $Y$ if, writing $\psi_p$ for $\psi\mid_{Y_p}$, we have
\begin{itemize}
\item each $(\psi_p,\psi_e)$ is a representation of $Y_p$; and
\item $\psi_p(x)\psi_q(y)=\psi_{pq}(xy)$ for all $p,q\in P, x\in Y_p, y\in Y_q$.
\end{itemize}
There is a $C^*$-algebra $\Tt_Y$ (called the Toeplitz algebra)
and a representation $i_Y:Y\to \Tt_Y$ which is universal in the following sense:
$\Tt_Y$ is generated by $i_Y(Y)$ and
for any representation $\psi :Y\to B$ there is a homomorphism $\psi_*:\Tt_Y\to B$ such that $\psi_*\circ i_Y=\psi$.
The representation $i_Y$ is isometric and unique up to isomorphism.

For each $p\in P$ we write
$\psi^{(p)}$ for the homomorphism $(\psi_p)^{(1)}$ discussed in the preceeding section.
 The
representation $\psi$ is {\em Cuntz-Pimsner covariant} if each $(\psi_p, \psi_e)$ is Cuntz-Pimsner covariant, that is if  $\psi^{(p)}\circ\phi_p=\psi_e$ for all $p\in P$.

There is a $C^*$-algebra $\Oo_Y$ and a Cuntz-Pimsner covariant
representation $j_Y:Y\to \Oo_Y$ which is universal in the
following sense: for any Cuntz-Pimsner covariant representation
$\psi :Y\to B$ there is a unique homomorphism $\psi_*:\Oo_Y\to
B$ such that $\psi_*\circ j_Y=\psi$. The pair $(\Oo_Y, j_Y)$ is
unique up to canonical isomorphism and $j_Y$ is isometric. For more details about
product systems, see \cite{F99}.

For $P=\NN^k$, universality allows us to define  strongly continuous gauge actions
$\gamma:\TT^k\to \Aut(\Oo_Y)$ and $\tilde{\gamma}:\TT^k\to \Aut(\Tt_Y)$
such that $\gamma_z(j_Y(y))=z^nj_Y(y)$ and $\tilde{\gamma}_z(i_Y(y))=z^ni_Y(y)$ for $y\in Y_n$.
If $Y$ is nondegenerate and each $\phi_n$ is an injection into $\Kk(Y_n)$, the fixed point algebra $\Oo_Y^\gamma$ is isomorphic to
the inductive limit \[
 \varinjlim_{n \in \NN^k}  \Kk(Y_n).
 \]

We will be mostly interested in nondegenerate product systems over
$P=\NN^k$ (under addition) such that each $\phi_n$ is an injection into $\Kk(Y_n)$.


\subsection{Fell bundles over groupoids} \label{sec:fellgroupoid}

Fell bundles over groupoids were introduced
in~\cite{Yamagami1990} and subsequently studied in~\cite{K1}.
The notion of Fell bundle over a  locally compact groupoid generalizes
both the notion of  $C^*$-algebraic bundle over a  group (see
~\cite[\S 11]{Fell77}) and that of  $C^*$-algebra bundle over a space.
Actions of groupoids on $C^*$-algebra bundles yield Fell bundles
but not all Fell bundles arise in this way.

We assume familiarity with groupoids; we direct the reader to
\cite{Renault1980} or \cite{Paterson1999} for the necessary
background or to \cite{KP} for details on groupoids associated
to higher-rank graphs.

\begin{dfn}[{\cite[Definition~2.1]{K1}}]\label{dfn:Fell bundle}
Let $\Gg$ be a locally compact Hausdorff groupoid and let
$\pi : E \to \Gg$ be a Banach bundle.
Define
\[
E^{(2)} = \{(e_1, e_2) \in E \times E : (\pi(e_1), \pi(e_2))  \in
\Gg^{(2)} \} .
\]
A {\em multiplication} on $E$ is a continuous map $(e_1, e_2)
\mapsto e_1e_2$ from $E^{(2)}$ to $E$ which satisfies:
\begin{enumerate}\renewcommand{\theenumi}{\roman{enumi}}
\item  $\pi(e_1e_2) = \pi(e_1)\pi(e_2)$  for all  $(e_1,
    e_2) \in E^{(2)}$
\item  the induced map $E_{g_1} \times  E_{g_2} \to
    E_{g_1g_2}$ is bilinear for all  $(g_1, g_2) \in
         \Gg^{(2)}$
\item $(e_1e_2)e_3 = e_1(e_2e_3)$  whenever the
    multiplication is defined
\item  $\|e_1e_2\| \le \|e_1\|\,\|e_2\|$ for all $(e_1,
    e_2) \in E^{(2)}$.\saveenumi
\end{enumerate}
An {\em involution} on $E$ is a continuous map $e \mapsto e^*$
from $E$ to $E$ which satisfies:
\begin{enumerate}
\renewcommand{\theenumi}{\roman{enumi}}\restoreenumi
\item  $\pi(e^*) = \pi(e)^{-1}$  for all $e \in E$
\item the restriction of the involution to $E_g$ is conjugate
    linear for all $g \in \Gg$
\item $ e^{**} = e$ for all $ e \in E$.\saveenumi
\end{enumerate}
Finally, the bundle $E$ together with the structure maps is
said to be a {\em Fell bundle} if in addition the following
conditions hold:
\begin{enumerate}\renewcommand{\theenumi}{\roman{enumi}}\restoreenumi
\item $ (e_1e_2)^* = e_2^*e_1^* $ for all $(e_1, e_2) \in
    E^{(2)}$
\item $ \|e^*e\| = \|e\|^2$  for all  $e \in E$
\item for each $e\in E$, $e^*e$ is positive as an element of $E_{s(\pi(e))}$ (which
 is a $C^*$-algebra by (i)-(ix)).
\end{enumerate}
The Fell bundle $E$ is said to be \emph{saturated} if
$E_{g_1}\cdot E_{g_2}$ is total in $E_{g_1g_2}$ for all $(g_1,
g_2) \in \Gg^{(2)}$.
\end{dfn}

\begin{rmk} It follows that $\| e^*\|=\| e\|$ for all $e\in E$.

Note that $E_g$ is a
right-Hilbert $E_{s(g)}$-module with right action implemented by
 multiplication and inner product given by
\[
\langle e_1,e_2\rangle_{E_{s(g)}}=e^*_1e_2.
\]
\noindent
Similarly, $E_g$ can be regarded as a left Hilbert $E_{r(g)}$-module.
Observe that $E$ is saturated if and only if $E_g$ is full as a right Hilbert module for all $g$.
Moreover, $E$ is saturated if and only if
 $E_g$ is an
$E_{r(g)}$--$E_{s(g)}$ imprimitivity bimodule for all $g$.
In this case, if $s(g_1)=x=r(g_2)$, there is an
isomorphism $E_{g_1}\otimes_{E_x}E_{g_2}\cong E_{g_1g_2}$
such that $e_1\otimes e_2\mapsto e_1e_2$.
\end{rmk}

\noindent We denote by $E^{(0)}$ the restriction of the bundle
$E$ to $\Gg^{(0)}$, and observe that $E^{(0)}$ is a
$C^*$-bundle. We denote the corresponding $C^*$-algebra of
sections vanishing at infinity by $C_0 ( \Gg^{(0)}, E^{(0)} )$.

Given a Fell bundle $E$ over an \'{e}tale groupoid $\Gg$ we
construct the $C^*$-algebra $C_r^* ( \Gg , E  )$ as a
completion of $C_c ( \Gg , E )$ in the following way. First we
use the groupoid structure of $\Gg$ to define a product and
involution on $C_c ( \Gg , E )$:
\[
(f_1 f_2)( g ) = \sum_{g = g_1 g_2} f_1 ( g_1 ) f_2 ( g_2 ), \qquad f^* (g) = f ( g^{-1} )^* .
\]

\noindent Then regard $C_c ( \Gg , E  )$
as a pre-Hilbert right $C_0 ( \Gg^{(0)}, E^{(0)} )$-module under pointwise operations,
and form the completion $L^2 ( \Gg , E )$. Left multiplication by
elements of $C_c ( \Gg , E )$ induces an embedding into  $\Ll ( L^2 ( \Gg , E ) )$.
We define $C_r^* (
\Gg , E )$ to be the completion of the image of $C_c ( \Gg , E
)$ in the operator norm. For more details see \cite[\S 3]{K1}.


\section{\texorpdfstring{$\Lambda$}{Lambda}-systems of \texorpdfstring{$C^*$}{C*}-correspondences and representations}\label{sec:systems}

\subsection{$\Lambda$-systems of $C^*$-correspondences}\label{sec:lambda-sys}
Given a $k$-graph $\Lambda$, we define a $\Lambda$-system of
$C^*$-correspondences  by associating a $C^*$-algebra
to each vertex, and a $C^*$-correspondence to each path, as
follows.

\begin{dfn}\label{dfn:Lambda system}
Let $\Lambda$ be a $k$-graph.  Fix
\begin{itemize}
\item for each vertex $v \in \Lambda^0$ a $C^*$-algebra
    $A_v$;
\item for each $\lambda \in \Lambda$ an
    $A_{r(\lambda)}$--$A_{s(\lambda)}$ $C^*$-correspondence
    $X_\lambda$; and
\item for each composable pair $\alpha,\beta$ in $\Lambda$
    a map of  $A_{r(\alpha)}$--$A_{s(\beta)}$ $C^*$-correspondences:\\
    $\chi_{\alpha,\beta} : X_{\alpha}
    \otimes_{A_{s(\alpha)}} X_{\beta} \to X_{\alpha\beta}$
    such that if $\alpha \not\in \Lambda^0$, then
    $\chi_{\alpha,\beta}$ is an isomorphism.
\end{itemize}
Suppose that the $A_v$, the $X_\lambda$ and the
$\chi_{\alpha,\beta}$ have the following properties:
\begin{enumerate}
\item\label{it:Xid} for each $v \in \Lambda^0$, $X_v =
    {}_{id}A_v$ (the identity correspondence
    over $A_v$);
\item\label{it:sys theta} for each $\lambda \in \Lambda$,
    the  maps $\chi_{r(\lambda),\lambda}$ and
    $\chi_{\lambda, s(\lambda)}$ are given by
    \[
    \chi_{r(\lambda),\lambda}(a \otimes_{A_{r(\lambda)}} x) = \phi_\lambda(a)x \qquad\text{and}\qquad
    \chi_{\lambda, s(\lambda)}(x \otimes_{A_{s(\lambda)}} a) = x \cdot a.
    \]
\item\label{it:sys assoc} for each composable triple
    $\alpha,\beta,\gamma \in \Lambda$, the following
    diagram commutes.
\[\begin{CD}
X_{\alpha} \otimes_{A_{s(\alpha)}} X_{\beta} \otimes_{A_{s(\beta)}} X_{\gamma}
 @>\chi_{\alpha,\beta} \otimes \id_{X_{\gamma}}>>
X_{\alpha\beta} \otimes_{A_{s(\beta)}} X_{\gamma} \\
 @V\id_{X_{\alpha}} \otimes \chi_{\beta,\gamma}VV   @V\chi_{\alpha\beta,\gamma}VV \\
X_{\alpha} \otimes_{A_{s(\alpha)}} X_{\beta\gamma}
 @>\hspace{1em}\chi_{\alpha, \beta\gamma}\hspace{1em}>>
X_{\alpha\beta\gamma}
\end{CD}\]
\end{enumerate}
Then we say that \emph{$(A, X, \chi)$ is a $\Lambda$-system of
$C^*$-correspondences}. By the usual abuse of notation, we will
frequently just say that $X$ is a $\Lambda$-system of
$C^*$-correspondences.
\end{dfn}

\begin{dfn}\label{dfn:regular}
We shall say that a system $X$ of $C^*$-correspondences is
\emph{regular} if it satisfies all of the following
assumptions:
\begin{itemize}
\item $\Lambda$ is row-finite and has no sources;
\item each $X_\lambda$ is nondegenerate and full; and
\item each $\phi_\lambda : A_{r(\lambda)} \to
    \Ll(X_\lambda)$ is injective and takes values in
    $\Kk(X_\lambda)$.
\end{itemize}
\end{dfn}

\begin{rmk}
Suppose that $X$ is a regular $\Lambda$-system of
$C^*$-correspondences. Note that each map $\phi_\lambda : A_{r(\lambda)} \to
    \Kk(X_\lambda)$ is approximately unital.
Furthermore, for every $\lambda$, since  $X_\lambda$ is
nondegenerate,   $\chi_{r(\lambda),\lambda} :
X_{r(\lambda)} \otimes_{A_{r(\lambda)}} X_\lambda \to
X_\lambda$ is an isomorphism. Hence by the third bullet-point of
Definition~\ref{dfn:Lambda system},
$\chi_{\alpha,\beta}$ is an isomorphism
for every composable pair $\alpha,\beta$.
\end{rmk}

\begin{rmk}\label{rmk:systems gives functor}
Fix a $\Lambda$-system $X$ of $C^*$-correspondences. Recall
that $\Cc$ denotes the category whose objects are
$C^*$-algebras and whose morphisms are isomorphism classes of
$C^*$-correspondences. There is a contravariant functor $F_X$
from $\Lambda$ to $\Cc$ determined by $F_X(\lambda) =
[X_\lambda]$; in particular, the object map satisfies $F^0_X(v)
= A_v$. As with systems of $k$-morphs (see \cite{KPS2}),
more than one system may determine the same
functor, and
there are functors which cannot be obtained in this way from any system.
\end{rmk}

\begin{rmk}\label{rmk:graph system}
To specify a $\Lambda$-system when $\Lambda$ is a $1$-graph it suffices to give a $C^*$-algebra $A_v$
for each vertex $v\in \Lambda^0$ and a $C^*$-correspondence $X_\lambda$
for each edge $\lambda\in \Lambda^1$. For $\lambda=\lambda_1\cdots\lambda_n\in \Lambda^n$,
with $n\ge 2$ and $\lambda_i\in\Lambda^1$,  define
\[
X_\lambda=X_{\lambda_1}\otimes_{A_{s(\lambda_1)}}X_{\lambda_2}\otimes_{A_{s(\lambda_2)}}\cdots \otimes_{A_{s(\lambda_{n-1})}}X_{\lambda_n};
\]
the maps $\chi_{\alpha, \beta}$ are given by the canonical
isomorphisms.

When $\Lambda$ is a $0$-graph, a $\Lambda$-system of $C^*$-correspondences simply consists of a $C^*$-algebra $A_v$
for each vertex $v\in \Lambda^0$.
\end{rmk}

\begin{examples} \label{ex:systems}
We pause to mention a number of examples from the literature
which can be regarded as $\Lambda$-systems. We will indicate
how each example relates to a $\Lambda$-system, but will
postpone detailed discussions of these and a number of other
examples until Section~\ref{sec:examples}.
\begin{enumerate}\renewcommand{\theenumi}{\roman{enumi}}
\item\label{it:MPT} In \cite[section 3]{MPT}, the authors
    consider two $C^*$-algebras $A$ and $B$, an $A$--$B$
    $C^*$-correspondence $R$, and a $B$--$A$
    $C^*$-correspondence $S$. Suppose $R$ and $S$ are
    nondegenerate with both left actions injective and given by compacts.
    Then they
    prove that the Cuntz-Pimsner
    algebras $\Oo_{R\otimes_B S}$ and $\Oo_{S\otimes_A R}$
    are  Morita-Rieffel equivalent. Let $\Lambda$
    be the path category of the directed graph pictured
    below.
    \[\begin{tikzpicture}
    \node[inner sep=1pt] (v) at (-1,0) {$v$};
    \node[inner sep=1pt] (w) at (1,0) {$w$};
    \draw[-latex] (v.north east) .. controls (0,0.5) .. (w.north west) node[pos=0.5, anchor=south, inner sep=1pt] {$e$};
    \draw[-latex] (w.south west) .. controls (0,-0.5) .. (v.south east) node[pos=0.5, anchor=north, inner sep=1pt] {$f$};
    \end{tikzpicture}\]
    If we let $A_v := A$, $A_w := B$, $X_e = R$ and $X_f =
    S$,
    then we obtain a  $\Lambda$-system $X$ of
    correspondences as in Remark~\ref{rmk:graph system}.
    If we assume in addition that $R$ and $S$ are full, then
    the $\Lambda$-system is regular.

\item Recall from \cite{KPS2} that, given $k$-graphs
$\Lambda$ and $\Gamma$, a $\Lambda$-$\Gamma$ $k$-morph $X$ is a
set $X$ together with range and source maps $r : X \to
\Lambda^0$ and $s : X \to \Gamma^0$ and a bijection
\[
\phi : \{(x,\gamma) \in  X \times \Gamma : s(x) = r(\gamma)\}
    \to \{(\lambda,y) \in \Lambda \times X : s(\lambda) = r(x)\}
\]
such that: whenever $\phi(x,\gamma) = (\lambda,y)$, we have
$r(x) = r(\lambda)$, $s(y) = s(\gamma)$, and $d(\lambda) =
d(\gamma)$; and whenever $\phi(x,\gamma) = (\lambda,y)$ and
$\phi(y,\sigma) = (\mu,z)$, we have $\phi(x, \gamma\sigma) =
(\lambda\mu, z)$.
 If $\Gamma$ is an $\ell$-graph,
then a $\Gamma$-system $W$ of $k$-morphs consists, roughly
    speaking, of $k$-graphs $\Lambda_v$ associated to the
    vertices $v \in \Gamma^0$, $k$-morphs $W_\gamma$
    associated to the paths $\gamma \in \Gamma$, and
    compatible isomorphisms $\theta_{\alpha,\beta} :
    W_\alpha \ast_{\Lambda^0_{s(\alpha)}} W_\beta \to
    W_{\alpha\beta}$. Under the technical hypothesis  $(\maltese)$,
   we have by \cite[Proposition~6.4]{KPS2} that
    each $k$-morph $W_\gamma$ gives rise to a full
    nondegenerate $C^*$-correspondence $\Hh(W_\gamma)$
    whose left action is implemented by an injective homomorphism into the compacts
    and it also follows implicitly from the proof of  \cite[Theorem~6.6]{KPS2} that the
    $\theta_{\alpha,\beta}$  determine isomorphisms
    $\chi(\theta_{\alpha,\beta}) : \Hh(W_\alpha)
    \otimes_{C^*(\Lambda_{s(\alpha)})} \Hh(W_\beta) \to
    \Hh(W_{\alpha\beta})$. In particular, if $\Gamma$ is row-finite with no sources,
and each $W_\gamma$ satisfies $(\maltese)$, then the assignments $A_v :=
    C^*(\Lambda_v)$ and $X_\gamma := \Hh(W_\gamma)$ and the
    isomorphisms $\chi(\theta_{\alpha,\beta}) : X_\alpha
    \otimes_{A_{s(\alpha)}} X_\beta \to X_{\alpha\beta}$
    determine a regular $\Gamma$-system of
    $C^*$-correspondences.
\item Every product system of
    $C^*$-correspondences over $\NN^k$  can be regarded as a $T_k$-system
    of $C^*$-correspondences where $T_k$ is the $k$-graph
     isomorphic as
    a category to $\NN^k$ (see section \ref{sec:hrgs}).
\item\label{ex:endo} Given a $C^*$-algebra $A$, let $\End_1(A)$ denote the
    semigroup of  approximately unital endomorphisms of $A$,
    regarded as a category with one object $A$. Let
    $\Lambda$ be a $k$-graph. Then each contravariant
    functor $\varphi : \Lambda \to \End_1(A)$ determines a
    $\Lambda$-system of $C^*$-correspondences.
    Specifically,  $A_v:=A$ for all $v \in \Lambda^0$,
    $X_\lambda := \corresp{\varphi(\lambda)}{A}{}$ for all $\lambda\in \Lambda$, and
    $\chi_{\alpha,\beta}$ is defined by $\chi_{\alpha,\beta}(a
    \otimes b) := \phi_{\beta}(a)b$ for all $a \in X_\alpha
    = \corresp{\varphi(\alpha)}{A}{}$ and $b \in X_\beta =
    \corresp{\varphi(\beta)}{A}{}$.
Conditions
(\ref{it:Xid})~and~(\ref{it:sys theta}) of
Definition~\ref{dfn:Lambda system} are clearly satisfied, and
condition~(\ref{it:sys assoc}) boils down to the identity
\[
\varphi_\gamma(\varphi_\beta(a)b)c = \varphi_{\beta\gamma}(a)\varphi_\gamma(b)c.
\]
The system is regular
    precisely when $\Lambda$ is row-finite with no sources,
    and each $\varphi_\lambda$ is  injective.  This example
  may easily be generalized by replacing the target category
  $\End_1(A)$ with the category of  $C^*$-algebras
   and  approximately unital homomorphisms.
\end{enumerate}
\end{examples}

Fix a $\Lambda$-system $X$ of $C^*$-correspondences. Let $A$
denote the $c_0$ direct sum
\[\textstyle
A = \bigoplus_{v \in \Lambda^0} A_v.
\]
We can regard the $X_\lambda$ as
$A$--$A$ correspondences in the obvious way. 
Hence, for
$n \in \NN^k$ we may define a $C^*$-correspondence $Y_n$ over
$A$ by
\[\textstyle
Y_n = \bigoplus_{\lambda \in \Lambda^n} X_\lambda
\]
(this time, we are taking an $\ell^2$ direct sum). For $\alpha
\in \Lambda$, let $\iota_\alpha : X_\alpha \to Y_{d(\alpha)}$
denote the inclusion map.

By checking that it preserves inner-products, one can see that
for $m,n \in \NN^k\setminus\{0\}$, the formula
\[
\Theta_{m,n}(\iota_\alpha(x) \otimes_A \iota_\beta(y)) :=
\begin{cases}
    \iota_{\alpha,\beta}(\chi_{\alpha,\beta}(x \otimes_{A_{s(\alpha)}} y))
     &\text{ if $s(\alpha) = r(\beta)$}\\
    0_{Y_{m+n}} &\text{ otherwise}
\end{cases}
\]
determines an isomorphism $\Theta_{m,n} : Y_m \otimes_A Y_n \to
Y_{m+n}$.

\begin{prop}\label{prp:assoc prod sys}
With notation as above,
\[
Y =Y_X:= \bigsqcup_{n \in \NN^k}
Y_n
\]
is a product system over $\NN^k$. If $X$ is regular (this
entails, in particular, that $\Lambda$ is row-finite and has no
sources), then $Y_X$ is nondegenerate, and the left action of
$A$ on each  fibre $Y_n$ of $Y_X$ is implemented by an
injection of $A$ into $\Kk(Y_n)$.
\end{prop}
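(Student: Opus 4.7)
The plan is to verify the four product system axioms for $Y = Y_X$ and then establish the three regularity conclusions under the assumption that $X$ is regular. The identity fibre condition is immediate from condition~\eqref{it:Xid} of Definition~\ref{dfn:Lambda system}:
\[
Y_0 = \bigoplus_{v \in \Lambda^0} X_v = \bigoplus_{v \in \Lambda^0} {}_{id}A_v = {}_{id}A,
\]
and the fact that multiplication by $Y_0 = A$ implements the left and right $A$-actions on each $Y_n$ is just condition~\eqref{it:sys theta} summed over the summands $X_\lambda$ of $Y_n$.

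First I would verify that each $\Theta_{m,n}$ with $m,n \neq 0$ is well-defined and an isomorphism of $A$--$A$ correspondences. Since distinct summands of $Y_m \otimes_A Y_n$ are orthogonal, the check reduces to elementary tensors $\iota_\alpha(x) \otimes_A \iota_\beta(y)$. If $s(\alpha) \neq r(\beta)$, then inserting the vertex projection $p_{r(\beta)} \in A$ into the balanced tensor product annihilates $\iota_\alpha(x) \otimes_A \iota_\beta(y)$, matching the defining value $0$ of $\Theta_{m,n}$; if $s(\alpha) = r(\beta)$, the map restricts to $\chi_{\alpha,\beta}$, which is an isomorphism because $d(\alpha) = m \neq 0$ forces $\alpha \notin \Lambda^0$. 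The factorisation property of $\Lambda$ gives a bijection between composable pairs $(\alpha,\beta) \in \Lambda^m \times \Lambda^n$ and paths in $\Lambda^{m+n}$, so assembling the $\chi_{\alpha,\beta}$ block-by-block yields the isomorphism $\Theta_{m,n} : Y_m \otimes_A Y_n \to Y_{m+n}$. Associativity of multiplication then reduces, block-by-block on $X_\alpha \otimes X_\beta \otimes X_\gamma$ with $s(\alpha)=r(\beta)$ and $s(\beta)=r(\gamma)$, to the commuting pentagon of condition~\eqref{it:sys assoc}, and holds trivially on all other blocks.

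Now suppose $X$ is regular. For $a = (a_v)_v \in A$, the left action $\phi_n(a)$ preserves each summand $X_\lambda$ of $Y_n$ and acts on it as $\phi_\lambda(a_{r(\lambda)})$. Nondegeneracy of each $X_\lambda$ therefore immediately gives $\overline{\operatorname{span}}\{\phi_n(a)y : a \in A, y \in Y_n\} = Y_n$ upon taking direct sums. Injectivity of $\phi_n$ follows from the absence of sources: if $\phi_n(a) = 0$ and $v \in \Lambda^0$, pick any $\lambda \in v\Lambda^n$; then $\phi_\lambda(a_v) = 0$ forces $a_v = 0$ by injectivity of $\phi_\lambda$.

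The main obstacle will be verifying that $\phi_n(A) \subseteq \Kk(Y_n)$, not merely $\Ll(Y_n)$. For $a = a_v$ supported at a single vertex $v$, $\phi_n(a_v)$ is zero on summands $X_\lambda$ with $r(\lambda) \neq v$ and equals $\phi_\lambda(a_v) \in \Kk(X_\lambda)$ on summands with $r(\lambda) = v$; since $v\Lambda^n$ is finite by row-finiteness, $\phi_n(a_v)$ is a finite orthogonal sum of operators in the $\Kk(X_\lambda)$, embedded in $\Kk(Y_n)$. A general element of $A = \bigoplus_v A_v$ is a norm-limit of finitely-supported sums of such elements, and since $\Kk(Y_n)$ is norm-closed in $\Ll(Y_n)$, this gives $\phi_n(a) \in \Kk(Y_n)$ for every $a \in A$.
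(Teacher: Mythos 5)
Your proposal is correct and follows essentially the same route as the paper: associativity is checked block-by-block via condition~(3) of Definition~\ref{dfn:Lambda system}, nondegeneracy and injectivity of $\phi_n$ follow from the corresponding properties of the $X_\lambda$ together with the no-sources hypothesis, and $\phi_n(A)\subset\Kk(Y_n)$ is obtained by treating a single vertex summand $a\in A_v$ (a finite sum over $v\Lambda^n$ by row-finiteness) and then passing to norm limits. The only quibble is your appeal to a ``vertex projection $p_{r(\beta)}\in A$'' to kill the mixed tensors --- the $A_v$ need not be unital, so this should be replaced by the inner-product computation $\langle \iota_\alpha(x)\otimes\iota_\beta(y),\iota_\alpha(x)\otimes\iota_\beta(y)\rangle_A=\langle \iota_\beta(y),\phi_\beta(\langle x,x\rangle_{A_{s(\alpha)}})\iota_\beta(y)\rangle_A=0$ when $s(\alpha)\ne r(\beta)$, which is exactly the inner-product check the paper alludes to before the proposition.
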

\begin{proof}
Fix $l,m,n \in \NN^k$. Then $Y_l \otimes_A Y_m \otimes_A Y_n$
is spanned by the subspaces
\[
\{\iota_\lambda(X_\lambda) \otimes_{A_{s(\lambda)}} \iota_\mu(X_\mu) \otimes_{A_{s(\mu)}} \iota_\nu(X_\nu)
    : \lambda \in \Lambda^l, \mu \in \Lambda^m, \nu \in \Lambda^n, s(\lambda) = r(\mu), s(\mu) = r(\nu)\}.
\]
Fix $x \in X_\mu, y \in X_\nu$, and $z \in X_\nu$, and for
convenience, write $\bar{x}$ for $\iota_\lambda(x) \in Y_l$ and
similarly for $\mu$ and $\nu$. Then the associativity condition
Definition~\ref{dfn:Lambda system}(\ref{it:sys assoc}) ensures
that
\begin{align*}
(\bar{x}\bar{y})\bar{z}
    &= \theta_{lm,n}(\theta_{l,m} \otimes 1)(\bar{x} \otimes \bar{y} \otimes \bar{z}) \\
    &= \iota_{\lambda\mu\nu}(\chi_{\lambda\mu,\nu}(\chi_{\lambda,\mu} \otimes 1)(x \otimes y \otimes z))
    = \iota_{\lambda\mu\nu}(\chi_{\lambda,\mu\nu}(1 \otimes \chi_{\mu,\nu})(x \otimes y \otimes z))
    = \bar{x}(\bar{y}\bar{z}).
\end{align*}
Hence $Y$ is a product system.

Now suppose that $X$ is regular. It is immediate that $Y$ is
nondegenerate because each $X_\lambda$ is. To see that each
$\phi_n$ is injective, fix $n \in \NN$ and $a \in A \setminus
\{0\}$. Then there is some $v \in \Lambda^0$ such that the
component $a_v$ of $a$ in $A_v$ is nonzero. Since $\Lambda$ has
no sources, there exists $\lambda \in v\Lambda^n$. Since $\phi_{\lambda}$ is injective,
the direct summand $\phi_\lambda(a_v)$ of $\phi_n(a)$ is
nonzero, and hence $\phi_n(a)$ is itself nonzero.  Finally, to
see that $\phi_n$ takes values in $\Kk(Y_n)$, observe that the
$A_v$ span a dense subspace of $A$ and that for a fixed $v$ and
$a \in A_v$, the operator $\phi_n(a) = \oplus_{\lambda \in
v\Lambda^n} \phi_\lambda(a)$ belongs to $\Kk(Y_n)$ because each
$\phi_\lambda(a) \in \Kk(X_\lambda)$ and because $v\Lambda^n$
is finite.
\end{proof}

The construction of the product system $Y$ from the
$\Lambda$-system $X$ is the analogue for systems of
correspondences of the $\Gamma$-bundle construction from
\cite{KPS2}.

Fix a $\Lambda$-system $X$, and paths $\alpha,\beta \in
\Lambda$ with $s(\alpha) = r(\beta)$. As on
\cite[page~42]{Lan}, there is a homomorphism $(\phi_\beta)_* :
\Ll(X_\alpha) \to \Ll(X_\alpha \otimes_{A_{s(\alpha)}}
X_\beta)$ characterised by
\[
(\phi_\beta)_*(T)(x \otimes y) = T(x) \otimes y.
\]
By \cite[Proposition~4.7]{Lan}, if $\phi_\beta(A_{s(\alpha)})
\subset \Kk(X_\beta)$, then
$(\phi_\beta)_*(\Kk(X_\alpha))\subset \Kk(X_\alpha
\otimes_{A_{s(\alpha)}} X_\beta)$, and $(\phi_\beta)_*$ is
injective if $\phi_\beta$ is injective, and surjective if
$\phi_\beta$ is surjective. We define a homomorphism
$i^{\alpha\beta}_{\alpha} : \Ll(X_\alpha) \to
\Ll(X_{\alpha\beta})$ by
\begin{equation}\label{eq:i-maps}
i^{\alpha\beta}_{\alpha}(S) := \chi_{\alpha,\beta} \circ
(\phi_\beta)_*(S) \circ \chi_{\alpha,\beta}^{-1}.
\end{equation}
Hence, if  $\phi_\beta : A_{r(\beta)} \to \Ll(X_\beta)$ takes
values in $\Kk(X_\beta)$, then $i^{\alpha\beta}_\alpha$
restricts to a homomorphism from $\Kk(X_\alpha)$ to
$\Kk(X_{\alpha\beta})$, which is injective if $\phi_\beta$ is.

The maps $i^{\alpha\beta}_{\alpha}$ are compatible
with composition in $\Lambda$ in the sense that for a
composable triple $\alpha,\beta,\gamma$ of $\Lambda$, we have
\[
i^{\alpha\beta\gamma}_{\alpha\beta} \circ i^{\alpha\beta}_{\alpha}
    = i^{\alpha\beta\gamma}_{\alpha}.
\]
To see this, fix $S\in\Ll(X_\alpha)$, apply each of
$(i^{\alpha\beta\gamma}_{\alpha\beta} \circ
i^{\alpha\beta}_{\alpha})(S)$ and $i^{\alpha\beta\gamma}_{\alpha}(S)$
to the image of an elementary tensor $x \otimes y \otimes z$ and
use condition~(3) of Definition~\ref{dfn:Lambda system} to see
that they agree.

Similarly, if $Y$ is a product system over $\NN^k$ and $m,n \in
\NN^k$, let $i_m^{m+n} : \Ll(Y_m) \to  \Ll(Y_{m+n})$ denote the
map obtained as in ~\eqref{eq:i-maps} from the isomorphisms $\Theta_{m,n}:Y_m\otimes_A Y_n\to Y_{m+n}$. If $\phi_n(A) \subset \Kk(Y_n)$ we reuse the
symbol $i_m^{m+n}$ to denote the restriction of $i_m^{m+n}$ to a homomorphism
 from $ \Kk(Y_m)$ to  $\Kk(Y_{m+n})$.

\subsection{Representations of $\Lambda$-systems of $C^*$-correspondences}\label{sec:rep}

\begin{dfn}\label{dfn:representation}
Let $(A,X,\chi)$ be a regular $\Lambda$-system of
$C^*$-correspondences. A \emph{representation} of $X$ in a
$C^*$-algebra $B$ is a pair $(\rho, \pi)$ consisting of
\begin{itemize}
\item linear maps $\rho_{\lambda} : X_\lambda \to B$, and
\item homomorphisms $\pi_v : A_v \to B$
\end{itemize}
which satisfy the following  conditions.
\begin{enumerate}
\item \label{rel:rho=pi} For each $v \in \Lambda^0$,
    $\rho_v = \pi_v$.
\item \label{rel:rho multiplicative}For $\alpha, \beta \in
    \Lambda$, $x \in X_\alpha$ and $y \in X_\beta$,
    \[
        \rho_{\alpha}(x)\rho_{\beta}(y) =
        \begin{cases}
            \rho_{\alpha\beta}(\chi_{\alpha,\beta}(x \otimes_{A_{s(\alpha)}} y)) &\text{ if $s(\alpha) = r(\beta)$} \\
            0_B &\text{ if $s(\alpha) \not= r(\beta)$.}
        \end{cases}
    \]
\item \label{rel:rho and inner-product} For all $\alpha,
    \beta \in \Lambda$ with $d(\alpha) = d(\beta)$, and all
    $x \in X_\alpha$ and $y \in X_\beta$,
    \[
        \rho_\alpha(x)^* \rho_\beta(y) =
        \begin{cases}
            \pi_{s(\alpha)}(\langle x, y \rangle_{A_{s(\alpha)}})  &\text{ if $\alpha = \beta$} \\
            0_B &\text{ otherwise.}
        \end{cases}
    \]
    \setcounter{mycounter}{\value{enumi}}
\end{enumerate}
We say that a representation $(\rho,\pi)$ of $X$ in $B$ is
\emph{Cuntz-Pimsner covariant} if
\begin{enumerate}\setcounter{enumi}{\value{mycounter}}
\item\label{rel:rho,pi covariant} for all $v \in
    \Lambda^0$, all $n \in \NN^k$ and all $a \in A_v$, we
    have
    \[
    \pi_{r(\lambda)}(a) = \sum_{\lambda \in v\Lambda^n} \rho^{(\lambda)}(\phi_\lambda(a)),
    \]
    where $\rho^{(\lambda)}=\rho_\lambda^{(1)}$.
\end{enumerate}
Let $(\rho, \pi)$ be a representation of $(A,X,\chi)$ in a
$C^*$-algebra $B$. We will say that $(\rho,\pi)$ is
\emph{universal} if for any other representation $(\rho',
\pi')$ of $(A,X,\chi)$ in a $C^*$-algebra $C$, there is a
unique homomorphism $\Phi = \Phi_{\rho',\pi'} : B \to C$
satisfying $\Phi \circ \rho_\lambda = \rho'_\lambda$ for all
$\lambda \in \Lambda$, and $\Phi \circ \pi_v = \pi'_v$ for all
$v \in \Lambda^0$. A Cuntz-Pimsner covariant representation
 of $(A,X,\chi)$ is universal if it has the  universal property  described above
 with respect to Cuntz-Pimsner covariant representations $(\rho',\pi')$.
\end{dfn}

\begin{rmk}
Since, in a regular $\Lambda$-system of $C^*$-correspondences, each
$X_\lambda$ is nondegenerate, conditions (\ref{rel:rho
multiplicative})~and~(\ref{rel:rho and inner-product}) of
Definition~\ref{dfn:representation} are then equivalent to the
apparently weaker relations
\begin{itemize}
\item[(A)] $\pi_v(A_v) \perp \pi_w(A_w)$ for distinct $v,w
    \in \Lambda^0$.
\item[(B)] $\rho_\alpha(x)\rho_\beta(y) =
    \rho_{\alpha\beta}(\chi_{\alpha,\beta}(x
    \otimes_{A_{s(\alpha)}} y))$ whenever $s(\alpha) =
    r(\beta)$, $x \in X_\alpha$ and $y \in X_\beta$.
\item[(C)] $\pi_{s(\lambda)} (\langle x, y
    \rangle_{A_{s(\lambda)}}) = \rho_\lambda(x)^*
    \rho_\lambda(y)$ for all $\lambda \in \Lambda$ and $x,y
    \in X_\lambda$.
\end{itemize}
We discuss the complications which would arise in the absence
of the regularity hypothesis at the end of the section in
Remark~\ref{rmk:bollocks}
\end{rmk}


\begin{prop} \label{prp:univ algs same}
Let $\Lambda$ be a row-finite $k$-graph with no sources, and
let $X$ be a regular $\Lambda$-system of $C^*$-correspondences.
Let $Y = Y_X$ be the product system over
$\NN^k$ of $C^*$-correspondences over $A=\oplus A_v$
obtained as above.
\begin{enumerate}
\item\label{it:Y->X} If $\psi$ is a representation of $Y$
    in a $C^*$-algebra $B$, then there is a representation
    $(\rho^\psi, \pi^\psi)$ of $X$ in $B$ given by
    $\rho^\psi_\alpha := \psi \circ \iota_{\alpha}$ and
    $\pi^\psi_v := \psi \circ \iota_v$.
\item\label{it:X->Y} Conversely if $(\rho, \pi)$ is a
    representation of $X$ in a $C^*$-algebra $B$, then
    there is a representation $\psi^{(\rho,\pi)}$ of $Y$ in
    $B$ determined by $\psi^{(\rho,\pi)}(\iota_\alpha(x)) =
    \rho_\alpha(x)$ for $\alpha \in \Lambda$ and $x \in
    X_\alpha$.
\end{enumerate}
These constructions are mutually inverse in the sense that for
a representation $\psi$ of $Y$, we have $\psi^{(\rho^\psi,
\pi^\psi)} = \psi$, and for a representation $(\rho,\psi)$ of
$X$, we have $(\rho^{\psi^{(\rho,\pi)}},
\pi^{\psi^{(\rho,\pi)}}) = (\rho,\pi)$. The representation
 $(\rho^{i_Y}, \pi^{i_Y})$ of $X$ in $\Tt_Y$
is universal in the sense described above.
\end{prop}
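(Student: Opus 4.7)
The plan is to prove the three claims in turn, exploiting that $Y_n$ is the Hilbert-module completion of the algebraic direct sum $\bigoplus^{\mathrm{alg}}_{\lambda\in\Lambda^n} \iota_\lambda(X_\lambda)$, on which every element has a unique finite expression $\sum_\lambda\iota_\lambda(x_\lambda)$. For~(\ref{it:Y->X}), given $\psi$ I set $\rho^\psi_\alpha:=\psi\circ\iota_\alpha$ and $\pi^\psi_v:=\psi\circ\iota_v$ and verify Definition~\ref{dfn:representation}(\ref{rel:rho=pi})--(\ref{rel:rho and inner-product}) directly. Condition~(\ref{rel:rho=pi}) is immediate. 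For~(\ref{rel:rho multiplicative}) I combine $\psi_m(\iota_\alpha(x))\psi_n(\iota_\beta(y))= \psi_{m+n}(\Theta_{m,n}(\iota_\alpha(x)\otimes\iota_\beta(y)))$ with the explicit formula for $\Theta_{m,n}$ from Proposition~\ref{prp:assoc prod sys}. For~(\ref{rel:rho and inner-product}) I use the Hilbert-module identity $\psi_n(\xi)^*\psi_n(\eta)= \psi_e(\langle\xi,\eta\rangle_A)$ with the observation that distinct summands of $Y_n$ are orthogonal, so $\langle\iota_\alpha(x),\iota_\beta(y)\rangle_A=0$ whenever $\alpha\neq\beta$ have the same degree.

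The substance lies in~(\ref{it:X->Y}). Given $(\rho,\pi)$ I define $\psi^{(\rho,\pi)}_n$ on the algebraic direct sum by $\sum_\lambda \iota_\lambda(x_\lambda)\mapsto \sum_\lambda\rho_\lambda(x_\lambda)$. The key ingredient is a contractivity estimate: since Definition~\ref{dfn:representation}(\ref{rel:rho and inner-product}) forces $\rho_\lambda(x_\lambda)^*\rho_\mu(x_\mu)=0$ for $\lambda\neq\mu$ with equal degree,
\[
\Big\|\sum_\lambda \rho_\lambda(x_\lambda)\Big\|^2
  = \Big\|\sum_\lambda \pi_{s(\lambda)}(\langle x_\lambda,x_\lambda\rangle_{A_{s(\lambda)}})\Big\|
  \leq \Big\|\sum_\lambda \iota_\lambda(x_\lambda)\Big\|_{Y_n}^2,
\]
where the inequality uses that $\pi=\bigoplus_v\pi_v:A\to B$ is a contractive $\ast$-homomorphism. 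This allows extension by continuity to a linear map $\psi^{(\rho,\pi)}_n:Y_n\to B$. It then remains to check that $\psi^{(\rho,\pi)}:=\bigsqcup_n\psi^{(\rho,\pi)}_n$ is a representation of $Y$; both the Hilbert-module axioms and the product identity $\psi_m\psi_n=\psi_{m+n}\circ\Theta_{m,n}$ reduce term-by-term on the dense algebraic sums to conditions~(\ref{rel:rho multiplicative})~and~(\ref{rel:rho and inner-product}), after which continuity finishes the job. This contractive-extension step is the principal technical obstacle.

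The mutually inverse statements hold by inspection on the generating sets $\iota_\alpha(X_\alpha)$ and extend by linearity and continuity. For universality of $(\rho^{i_Y},\pi^{i_Y})$ in $\Tt_Y$, given any representation $(\rho',\pi')$ of $X$ in $C$, I feed the associated $\psi^{(\rho',\pi')}$ into the universal property of $\Tt_Y$ recalled in Section~\ref{sec:prodsys} to obtain a unique homomorphism $\Phi:\Tt_Y\to C$ with $\Phi\circ i_Y=\psi^{(\rho',\pi')}$. Then $\Phi\circ\rho^{i_Y}_\alpha=\Phi\circ i_Y\circ\iota_\alpha=\rho'_\alpha$, and likewise $\Phi\circ\pi^{i_Y}_v=\pi'_v$; uniqueness of $\Phi$ is inherited because $i_Y(\iota_\alpha(X_\alpha))=\rho^{i_Y}_\alpha(X_\alpha)$, so $\Phi$ is pinned down on a generating set of $\Tt_Y$.
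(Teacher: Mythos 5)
Your proposal is correct and follows essentially the same route as the paper: verify the representation axioms directly for part~(\ref{it:Y->X}), construct $\psi^{(\rho,\pi)}$ on the dense algebraic sums for part~(\ref{it:X->Y}), and obtain universality by feeding $\psi^{(\rho',\pi')}$ into the universal property of $\Tt_Y$. The only differences are cosmetic refinements: you derive the vanishing of $\rho^\psi_\alpha(x)\rho^\psi_\beta(y)$ for $s(\alpha)\neq r(\beta)$ from the explicit formula for $\Theta_{m,n}$ rather than from the Hewitt--Cohen factorisation argument the paper uses, and you make explicit the contractivity estimate justifying the extension of $\psi^{(\rho,\pi)}_n$ to $Y_n$, a point the paper passes over with ``by definition of the direct sums.''
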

\begin{proof}
For~(\ref{it:Y->X}), fix a representation $\psi$ of $Y$ in
$B$, and let $\rho^\psi$ and $\pi^\psi$ be as in ~(\ref{it:Y->X}). We have
$\rho^\psi_v = \pi^\psi_v$ for all $v$ by definition.

Fix $\alpha,\beta \in \Lambda$, $x \in X_\alpha$ and $y \in
X_\beta$. We must establish
Definition~\ref{dfn:representation}(\ref{rel:rho
multiplicative}). When $s(\alpha) = r(\beta)$ this follows
because multiplication in $Y$ is determined by the isomorphisms
$\chi_{\mu,\nu}$, and $\psi$ is multiplicative. When $s(\alpha)
\not= r(\beta)$, the left-hand side of
Definition~\ref{dfn:representation}(\ref{rel:rho
multiplicative}) is equal to zero by the Hewitt-Cohen
factorisation theorem because the $\pi^\psi_v(A_v)$ are
orthogonal.

To verify Definition~\ref{dfn:representation}(\ref{rel:rho and
inner-product}), Fix $\alpha,\beta \in \Lambda$, $x \in
X_\alpha$ and $y \in X_\beta$. Then
\[
\rho^\psi_\alpha(x)^* \rho^\psi_\beta(y)
    = \psi(\iota_\alpha(x))^* \psi(\iota_\beta(y))
    = \psi(\langle \iota_\alpha(x), \iota_\beta(y) \rangle_A),
\]
and the desired relation holds because $\alpha \not= \beta$
forces $\langle \iota_\alpha(x), \iota_\beta(y) \rangle_A = 0$,
and because $\psi$ is a representation. This
establishes~(\ref{it:Y->X}).

For~(\ref{it:X->Y}), let $(\rho,\pi)$ be a representation of
$X$. By definition of the direct sums $Y_n, n\in {\NN}^k$,
there exists a map $\psi^{(\rho,\pi)} : Y \to B$ satisfying the
given formulae, and the restriction $\psi_0$ of $\psi$ to $Y_0
= A$ is a $C^*$-homomorphism. Fix $x,y \in Y$, say $x \in Y_m$
and $y \in Y_n$. We must show that $\psi^{(\rho,\pi)}(xy) =
\psi^{(\rho,\pi)}(x)\psi^{(\rho,\pi)}(y)$. Since multiplication
in $Y$ implements bimodule isomorphisms, it suffices to
consider $x \in X_\mu$ and $y \in Y_\nu$ for some $\mu \in
\Lambda^m$, $\nu \in \Lambda^n$, and show that
\begin{equation}\label{eq:psi(rho,pi) mplicative}
\psi^{(\rho,\pi)}(\iota_\mu(x)\iota_\nu(y)) =
\psi^{(\rho,\pi)}(\iota_\mu(x))\psi^{(\rho,\pi)}(\iota_\nu(y)).
\end{equation}
This follows from
Definition~\ref{dfn:representation}(\ref{rel:rho
multiplicative}), the Hewitt-Cohen factorisation theorem, and
the definition of multiplication in $Y$.
Now fix $x,y \in Y$; we must show that
$\psi^{(\rho,\pi)}(\langle x, y \rangle_A) =
\psi^{(\rho,\pi)}(x)^* \psi^{(\rho,\pi)}(y)$. By
sesqui-linearity and continuity we need only consider $x \in
X_\alpha$ and $y \in X_\beta$ for some $\alpha,\beta \in
\Lambda$. The desired identity then follows from routine
calculations using
Definition~\ref{dfn:representation}(\ref{rel:rho and
inner-product}) and the definition of the inner product in $Y$.
This completes the proof of~\eqref{it:X->Y}.

That the two constructions are inverse to each other follows from the
definitions of the maps involved. For the final statement,
observe that the universal representation $j$ of $Y$ in $\Tt_Y$
determines a generating representation $(\rho^j, \pi^j)$ of $X$
in $\Tt_Y$. If $(\rho,\pi)$ is a representation of $X$ in $B$,
then $(\psi^{(\rho,\pi)})$ is a representation of $Y$ in $B$.
The universal property of $\Tt_Y$ gives a unique homomorphism $\sigma
: \Tt_Y \to B$ such that $\sigma \circ j = \psi^{(\rho,\pi)}$,
and it is  easy to check that this is equivalent to $\sigma \circ \pi^j_v =
\pi_v$ and $\sigma \circ \rho^j_\lambda = \rho_\lambda$ for all
$v \in \Lambda^0$ and $\lambda \in \Lambda$.
\end{proof}

\begin{rmk}
It should also be possible to consider contractive
representations of a $\Lambda$-system of $C^*$-correspondences
using the associated product system as in Proposition
\ref{prp:univ algs same} and thus to formulate a corresponding
dilation theory in the manner of \cite{MS, Skalski-pp08,
Solel2008}.
We  thank the referee for pointing out this connection.
\end{rmk}

Our next goal is to show that the bijection between
representations of a regular $\Lambda$-system  of
$C^*$-correspondences $X$ and representations of the associated
product system $Y_X$ preserves Cuntz-Pimsner covariance.


\begin{prop}\label{prp:preserves CP}
Let $X$ be a regular $\Lambda$-system of $C^*$-correspondences.
Let $Y=Y_X$ be the product system associated to $X$ (see
Proposition \ref{prp:assoc prod sys}).
A representation $\psi$ of $Y$  in a $C^*$-algebra $B$ is Cuntz-Pimsner covariant if and only if the
representation $(\rho^\psi,\pi^\psi)$ of $X$ in $B$ is
Cuntz-Pimsner covariant.
\end{prop}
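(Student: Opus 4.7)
The plan is to unpack what Cuntz–Pimsner covariance says on each side and show the two statements coincide term-for-term, once one writes $\phi_n$ explicitly using the direct-sum decomposition of $Y_n = \bigoplus_{\lambda \in \Lambda^n} X_\lambda$.

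First I would fix $n \in \NN^k$, a vertex $v \in \Lambda^0$, and $a \in A_v$, and identify $\phi_n(a)$ as an element of $\Kk(Y_n)$. Regularity forces $v\Lambda^n$ to be finite, and each $\phi_\lambda(a)$ lies in $\Kk(X_\lambda)$, so letting $\iota_\lambda : X_\lambda \hookrightarrow Y_n$ be the coordinate inclusion (an adjointable isometry), the map
\[
\phi_n(a) \;=\; \sum_{\lambda \in v\Lambda^n} \iota_\lambda\,\phi_\lambda(a)\,\iota_\lambda^*
\]
is a well-defined finite sum in $\Kk(Y_n)$; this is an immediate check on elementary tensors $\iota_\mu(x) \in Y_n$, using that the inner product on $Y_n$ is zero across distinct summands. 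For general $a = (a_w)_w \in A$ one then has $\phi_n(a) = \sum_v \phi_n(a_v)$ and the verification reduces to the case $a \in A_v$.

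Next I would compute $\psi^{(n)}$ on the ``diagonal pieces'' $\iota_\lambda k \iota_\lambda^* \in \Kk(Y_n)$ for $k \in \Kk(X_\lambda)$. It suffices to consider $k = \theta_{x,y}$ with $x,y \in X_\lambda$: then $\iota_\lambda k \iota_\lambda^* = \theta_{\iota_\lambda(x),\iota_\lambda(y)}$, and by definition of $\psi^{(n)}$ together with $\psi_n \circ \iota_\lambda = \rho^\psi_\lambda$, we get
\[
\psi^{(n)}(\iota_\lambda k \iota_\lambda^*) = \psi_n(\iota_\lambda(x))\psi_n(\iota_\lambda(y))^* = \rho^\psi_\lambda(x)\rho^\psi_\lambda(y)^* = \rho^{\psi,(\lambda)}(k).
\]
Combining the two displayed formulas,
\[
\psi^{(n)}(\phi_n(a)) \;=\; \sum_{\lambda \in v\Lambda^n} \rho^{\psi,(\lambda)}(\phi_\lambda(a)) \qquad (a \in A_v).
\]

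Finally, since $\psi_e \circ \iota_v = \pi^\psi_v$, the identity $\psi_e(a) = \psi^{(n)}(\phi_n(a))$ for $a \in A_v$ is exactly
\[
\pi^\psi_v(a) \;=\; \sum_{\lambda \in v\Lambda^n}\rho^{\psi,(\lambda)}(\phi_\lambda(a)),
\]
which is Definition~\ref{dfn:representation}(\ref{rel:rho,pi covariant}) for $(\rho^\psi,\pi^\psi)$ at $(v,n)$. Running the equivalence in both directions over all $v \in \Lambda^0$ (which, since $A = \bigoplus_v A_v$, is the same as varying $a$ over $A$) and over all $n \in \NN^k$ yields the claim; the case $n=0$ is automatic on both sides since $\phi_0$ is the identification $A \cong \Kk(Y_0)$ and each $\phi_v$ is the identity on $A_v$. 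I do not anticipate a genuine obstacle: the argument is really just bookkeeping with the direct-sum decomposition, and the finiteness of $v\Lambda^n$ guaranteed by regularity is what makes the sum defining $\phi_n(a)$ legitimate inside $\Kk(Y_n)$.
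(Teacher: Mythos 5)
Your proof is correct and follows essentially the same route as the paper's: both establish the identity $\psi^{(n)}\circ\iota^{(\lambda)} = (\rho^\psi)^{(\lambda)}$ by checking on rank-one operators $\theta_{x,y}$, decompose $\phi_n(a)$ as the finite sum $\sum_{\lambda\in v\Lambda^n}\iota^{(\lambda)}(\phi_\lambda(a))$, and conclude that the two covariance conditions are term-for-term the same identity in $B$. Your $\iota_\lambda k\,\iota_\lambda^*$ is exactly the paper's embedding $\iota^{(\lambda)}$, so there is nothing further to add.
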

\begin{proof}
Since $\psi^{(\rho^\psi, \pi^\psi)} = \psi$ for all
representations $(\rho,\pi)$ of $X$ and vice versa, it suffices
to show that if $\psi$ is Cuntz-Pimsner covariant, then
$(\rho^\psi, \pi^\psi)$ has the same property and that if
$(\rho,\pi)$ is Cuntz-Pimsner covariant, then
$\psi^{(\rho,\pi)}$ has the same property.

Before doing this, we establish some notation. For each
$\lambda \in \Lambda$, there is a homomorphism
$\iota^{(\lambda)} : \Kk(X_\lambda) \to \Kk(Y_{d(\lambda)})$ determined by
\begin{equation}\label{eq:iota^(lambda) def}
\iota^{(\lambda)}(\theta_{x,y}) =\theta_{\iota_\lambda(x),
\iota_\lambda(y)}
\end{equation}
for $x,y\in X_\lambda$. For $x,y \in X_\lambda$, we then have
\[\begin{split}
(\rho^{\psi})^{(\lambda)}(\theta_{x,y})
 &= \rho^\psi(x)\rho^\psi(y)^* \\
 &= \psi(\iota_\lambda(x)) \psi(\iota_\lambda(y))^*
 = \psi^{(d(\lambda))}(\theta_{\iota_\lambda(x) ,\iota_\lambda(y)})
 = \psi^{(d(\lambda))}(\iota^{(\lambda)}(\theta_{x,y}));
\end{split}\]
that is,
\begin{equation}\label{eq:compact repn compatibility}
(\rho^\psi)^{(\lambda)} = \psi^{(d(\lambda))} \circ \iota^{(\lambda)} \text{ for all $\lambda \in \Lambda$.}
\end{equation}
Equivalently, for a fixed representation $(\rho,\pi)$ of
$X$,
\begin{equation}\label{eq:compact repn compatibility II}
\rho^{(\lambda)} = (\psi^{(\rho,\pi)})^{(d(\lambda))} \circ \iota^{(\lambda)} \text{ for all $\lambda \in \Lambda$.}
\end{equation}

To prove the equivalence of the two notions of Cuntz-Pimsner
covariance, first note that since $A = \bigoplus_{v \in
\Lambda^0} A_v$ is spanned by the $A_v$ and since, for a
representation $\psi$ of $Y$, we have $\pi^\psi_v =
\psi_0|_{A_v}$ by definition, it will suffice to fix a
representation $\psi$ of $Y$, a vertex $v \in \Lambda^0$, an
element $n \in \NN^k$ and an element $a \in A_v$ and prove that
\[
\sum_{\lambda \in v\Lambda^n} (\rho^\psi)^{(\lambda)}(\phi_\lambda(a))
    = \psi^{(n)}(\phi_n(a)).
\]
We have $\phi_n(a) =
\sum_{\lambda \in v\Lambda^n}
\iota^{(\lambda)}(\phi_\lambda(a))$ by definition of the
product system $Y$, so we may use~\eqref{eq:compact repn
compatibility} to see that
\begin{align*}
\sum_{\lambda \in v\Lambda^n} (\rho^\psi)^{(\lambda)}(\phi_\lambda(a))
    &= \sum_{\lambda \in v\Lambda^n} \psi^{(n)}(\iota^{(\lambda)}(\phi_{\lambda}(a))) \\
    &= \psi^{(n)}\Big(\sum_{\lambda \in v\Lambda^n} \iota^{(\lambda)}(\phi_{\lambda}(a))\Big)
    = \psi^{(n)}(\phi_n(a))
\end{align*}
as required.
\end{proof}
It follows from \cite[Theorem~4.1 and Proposition~5.1]{SY} that
each nondegenerate product system $Y$ in which each $\phi_n$ is
injective with range in $\Kk(Y_n)$ has an isometric universal
Cuntz-Pimsner covariant representation $j_Y : Y\to \Oo_Y$ (it
seems likely that Fowler knew this, but did not make it
explicit in~\cite{F99}).

\begin{cor}\label{cor:universal property}
Let $X$ be a regular $\Lambda$-system of $C^*$-correspondences.
Let $Y=Y_X$ be the product system associated to $X$ (see
Proposition~\ref{prp:assoc prod sys}). The representation
$(\rho^{j_Y}, \pi^{j_Y})$ of $(A,X,\chi)$ in $\Oo_Y$ is
Cuntz-Pimsner covariant. This is a universal Cuntz-Pimsner covariant representation in
the sense that for any other Cuntz-Pimsner covariant
representation $(\rho, \pi)$ of $(A,X,\chi)$ in a $C^*$-algebra
$C$, there is a unique homomorphism $\Psi = \Psi_{\rho,\pi} :
\Oo_Y \to C$ satisfying $\Psi \circ \rho^{j_Y}_\lambda =
\rho_\lambda$ for all $\lambda \in \Lambda$, and $\Psi \circ
\pi^{j_Y}_v = \pi_v$ for all $v \in \Lambda^0$.
\end{cor}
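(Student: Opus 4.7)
The plan is to prove this corollary as a straightforward transfer of the universal Cuntz-Pimsner property of $(\Oo_Y, j_Y)$ (which holds for the product system $Y$ by the remark preceding the corollary, invoking \cite[Theorem 4.1, Proposition 5.1]{SY}) across the bijection between representations of $X$ and representations of $Y$ established in Propositions \ref{prp:univ algs same} and \ref{prp:preserves CP}.

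First I would observe that the representation $j_Y : Y \to \Oo_Y$ is, by hypothesis, a Cuntz-Pimsner covariant representation of the product system $Y$, and that $Y = Y_X$ is nondegenerate with each $\phi_n$ an injection into $\Kk(Y_n)$ by Proposition \ref{prp:assoc prod sys}. Hence by Proposition \ref{prp:univ algs same}(\ref{it:Y->X}) there is a representation $(\rho^{j_Y}, \pi^{j_Y})$ of $X$ in $\Oo_Y$, and by Proposition \ref{prp:preserves CP} this representation is Cuntz-Pimsner covariant. This establishes the first sentence of the corollary.

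For universality, I would take an arbitrary Cuntz-Pimsner covariant representation $(\rho, \pi)$ of $(A,X,\chi)$ in a $C^*$-algebra $C$. By Proposition \ref{prp:univ algs same}(\ref{it:X->Y}), there is an induced representation $\psi^{(\rho,\pi)} : Y \to C$, and by Proposition \ref{prp:preserves CP} it is Cuntz-Pimsner covariant. The universal property of $(\Oo_Y, j_Y)$ as the Cuntz-Pimsner algebra of $Y$ then supplies a unique homomorphism $\Psi : \Oo_Y \to C$ with $\Psi \circ j_Y = \psi^{(\rho,\pi)}$. Unwinding the definitions $\rho^{j_Y}_\lambda = j_Y \circ \iota_\lambda$ and $\pi^{j_Y}_v = j_Y \circ \iota_v$, together with $\psi^{(\rho,\pi)} \circ \iota_\lambda = \rho_\lambda$ and $\psi^{(\rho,\pi)} \circ \iota_v = \pi_v$, yields the required intertwining relations $\Psi \circ \rho^{j_Y}_\lambda = \rho_\lambda$ and $\Psi \circ \pi^{j_Y}_v = \pi_v$.

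For uniqueness of $\Psi$, I would argue that any homomorphism from $\Oo_Y$ to $C$ intertwining the $\rho^{j_Y}_\lambda$ with $\rho_\lambda$ and the $\pi^{j_Y}_v$ with $\pi_v$ agrees with $\psi^{(\rho,\pi)}$ on all elements of the form $j_Y(\iota_\lambda(x))$, hence on the image of $j_Y$, which generates $\Oo_Y$. The mild obstacle (really just bookkeeping) is tracking that the bijection between $(\rho,\pi)$ and $\psi^{(\rho,\pi)}$ from Proposition \ref{prp:univ algs same} faithfully translates the two flavours of universal property; since that bijection is already set up to carry generators to generators and relations to relations, there is nothing further to verify.
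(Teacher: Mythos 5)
Your proposal is correct and follows essentially the same route as the paper, which deduces the corollary from the universal property of $\Oo_Y$ together with Proposition~\ref{prp:preserves CP}, exactly as in the final paragraph of the proof of Proposition~\ref{prp:univ algs same}. Your write-up simply makes explicit the bookkeeping (the intertwining of $\iota_\lambda$, $\iota_v$ with $j_Y$ and $\psi^{(\rho,\pi)}$, and the generation of $\Oo_Y$ by $j_Y(Y)$ for uniqueness) that the paper leaves to the reader.
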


\begin{proof}
The results follow from the universal property of $\Oo_Y$ and
Proposition~\ref{prp:preserves CP}, as in the proof of the final statement of Proposition \ref{prp:univ algs same}.
\end{proof}

\begin{dfn}\label{dfn:C^*(X)}
Let $X$ be a regular $\Lambda$-system of $C^*$-correspondences,
and let $Y_X$ be the product system associated to $X$ as in
Proposition~\ref{prp:assoc prod sys}. We define $C^*(A,X,\chi)
:= \Oo_{Y_X}$, and let $(\rho^X,\pi^X)$ denote the universal
Cuntz-Pimsner covariant representation $(\rho^{j_Y},
\pi^{j_Y})$ of $(A,X,\chi)$ in $C^*(A,X,\chi)$. Given a
Cuntz-Pimsner covariant representation $(\rho,\pi)$ of
$(A,X,\chi)$ in a $C^*$-algebra $C$, we denote the homomorphism induced
 by the universal property
by $\Psi_{\rho,\pi} : C^*(A,X,\chi) \to C$.
\end{dfn}

\subsection{The gauge-invariant uniqueness theorem}\label{sec:giut}
There is a strongly continuous gauge action
$\gamma:\TT^k\to \Aut(C^*(A,X,\chi))$ (inherited from $\Oo_Y$)
such that for all $z\in\TT^k$ we have
\begin{align*}
&&\gamma_z(\pi^X_v(a))&=\pi^X_v(a) &&\text{for all $v\in \Lambda^0$ and $a\in A_v$, }&&\\
&&\gamma_z(\rho^X_\lambda(x))&=z^{d(\lambda)}\rho^X_\lambda(x) &&\text{for all $\lambda\in \Lambda$ and $x\in X_\lambda$},&&
\end{align*}
where $z^{d(\lambda)}$ is multi-index notation: $z^{d(\lambda)}=\prod^k_{i=1}z_i^{d(\lambda)_i}$.
\begin{thm}\label{thm:giut}
Let $X$ be a regular $\Lambda$-system of $C^*$-correspondences.
Let $B$ be a $C^*$-algebra equipped with a strongly continuous
action $\beta:\TT^k\to\Aut(B)$. Let $(\rho, \pi)$ be a
Cuntz-Pimsner covariant representation of $(A,X,\chi)$ in  $B$.
Suppose the following conditions are satisfied:
\begin{enumerate}
\item\label{it:giut-gauge} for all $z\in\TT^k$ we have
\begin{align*}
&&\beta_z(\pi_v(a))&=\pi_v(a) &&\text{for  $v\in \Lambda^0$ and $a\in A_v$, }&&\\
&&\beta_z(\rho_\lambda(x))&=z^{d(\lambda)}\rho_\lambda(x) &&\text{for all $\lambda\in \Lambda$ and $x\in X_\lambda$};&&
\end{align*}
\item\label{it:giut-injective} for all $v\in \Lambda^0,$
    $\pi_v$ is injective.
\end{enumerate}
Then  $\Psi_{\rho,\pi}: C^*(A,X,\chi)\to B$ is injective.
\end{thm}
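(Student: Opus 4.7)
The plan is to transfer the question to the associated product system $Y = Y_X$ and run the standard gauge-invariant uniqueness argument. By Definition~\ref{dfn:C^*(X)} combined with Propositions~\ref{prp:univ algs same}~and~\ref{prp:preserves CP}, we have $C^*(A,X,\chi) = \Oo_Y$, and $\Psi_{\rho, \pi}$ is exactly the homomorphism $\Oo_Y \to B$ induced by the Cuntz-Pimsner covariant representation $\psi := \psi^{(\rho, \pi)}$ of $Y$. So it is enough to show that this induced homomorphism is injective.

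First, hypothesis~(\ref{it:giut-gauge}) translates, via $\pi_v = \psi_0|_{A_v}$ and $\rho_\lambda = \psi \circ \iota_\lambda$, into gauge equivariance of $\psi$, hence $\Psi_{\rho, \pi}$ intertwines $\gamma$ with $\beta$. Integration against Haar measure on $\TT^k$ produces the faithful conditional expectation $E : \Oo_Y \to \Oo_Y^\gamma$, and a corresponding expectation $F$ on $\overline{\Psi_{\rho, \pi}(\Oo_Y)}$, satisfying $F \circ \Psi_{\rho, \pi} = \Psi_{\rho, \pi} \circ E$. The usual argument (if $a \in \ker\Psi_{\rho, \pi}$ then $\Psi_{\rho, \pi}(E(a^*a)) = F(\Psi_{\rho, \pi}(a^*a)) = 0$, so injectivity on the fixed-point algebra plus faithfulness of $E$ forces $a^*a = 0$) reduces the problem to injectivity of $\Psi_{\rho, \pi}|_{\Oo_Y^\gamma}$.

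Next I would invoke the description $\Oo_Y^\gamma \cong \varinjlim_{n\in \NN^k} \Kk(Y_n)$ from Section~\ref{sec:prodsys}, which applies because the hypotheses on $Y$ in Proposition~\ref{prp:assoc prod sys} are in force. Since each $\phi_n$ is injective, so are the connecting maps $i_m^{m+n}$, and injectivity on an inductive limit with injective connecting maps follows from injectivity at each stage. Under the identification $\Psi_{\rho, \pi}|_{\Kk(Y_n)} = \psi^{(n)} := (\psi^{(\rho,\pi)})^{(n)}$, the problem therefore reduces to showing that each $\psi^{(n)} : \Kk(Y_n) \to B$ is injective.

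This last reduction is the main obstacle, and it is where the regularity hypotheses pay off. A direct check on rank-one operators and continuous linear extension yields the identity $\psi^{(n)}(T)\psi(\eta) = \psi(T\eta)$ for all $T \in \Kk(Y_n)$ and $\eta \in Y_n$. Hence if $\psi^{(n)}(T) = 0$, then for all $\xi, \eta \in Y_n$,
\[
\psi_0(\langle \xi, T\eta\rangle_A) = \psi(\xi)^*\psi(T\eta) = \psi(\xi)^*\psi^{(n)}(T)\psi(\eta) = 0.
\]
By hypothesis~(\ref{it:giut-injective}) each $\pi_v$ is injective, and the mutual orthogonality $\pi_v(A_v) \perp \pi_w(A_w)$ for $v \neq w$ (relation~(A) following Definition~\ref{dfn:representation}) gives, by a straightforward projection argument, that $\psi_0 = \pi$ is injective on the $c_0$-direct sum $A = \bigoplus_v A_v$. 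Thus $\langle \xi, T\eta\rangle_A = 0$ for all $\xi, \eta \in Y_n$, and positive definiteness of the $A$-valued inner product on $Y_n$ forces $T\eta = 0$ for every $\eta$, so $T = 0$.
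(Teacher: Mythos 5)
Your proposal is correct and follows essentially the same route as the paper: pass to the product system $Y_X$, use the conditional expectations coming from the gauge actions to reduce to the fixed-point algebra, identify $\Oo_Y^\gamma$ with $\varinjlim_n \Kk(Y_n)$, and deduce injectivity there from injectivity of each $\psi^{(n)}$. The only cosmetic difference is that where the paper cites Pimsner for the implication ``$\psi_0$ injective $\Rightarrow$ $\psi^{(n)}$ injective,'' you write out that standard argument explicitly via the identity $\psi^{(n)}(T)\psi(\eta)=\psi(T\eta)$.
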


The proof follows from Proposition \ref{prp:assoc prod sys}, the definition of the gauge action
and the following lemma.

\begin{lem}\label{lem:giut for Y}
Let $Y$ be a nondegenerate product system over $\NN^k$ such that
the left action on each $Y_n$ is injective and by compacts. Let $B$ be a $C^*$-algebra equipped
 with a strongly continuous action $\beta:\TT^k\to\Aut(B)$, and let  $\psi$
be a Cuntz-Pimsner covariant representation of $Y$ in  $B$. Suppose that the following
conditions are satisfied.
\begin{enumerate}
\item For all $z\in\TT^k$, $n \in \NN^k$ and $y \in Y_n$, we have
  $\beta_z(\psi_n(x)) =z^n\psi_n(x)$.
\item The homomorphism $\psi_0: A\to B$ is injective.
\end{enumerate}
Then the induced map $\psi_{*}: \Oo_Y \to B$ is injective.
\end{lem}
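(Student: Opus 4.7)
The plan is to run the standard gauge-invariant uniqueness argument, in the form adapted to product systems: use an averaging conditional expectation to reduce to proving injectivity on the fixed-point algebra $\Oo_Y^\gamma$, then exploit the inductive-limit description $\Oo_Y^\gamma \cong \varinjlim_{n \in \NN^k} \Kk(Y_n)$ recalled at the end of Section~\ref{sec:prodsys}.

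First I would observe that hypothesis~(1), together with the universal property of $\Oo_Y$, forces $\psi_*$ to be $\TT^k$-equivariant with respect to $\gamma$ and $\beta$: this can be checked on generators, since for $y \in Y_n$ one has $\psi_*(\gamma_z(j_Y(y))) = z^n \psi(y) = \beta_z(\psi_*(j_Y(y)))$. Averaging against normalised Haar measure on $\TT^k$ then produces conditional expectations $\Phi^\gamma : \Oo_Y \to \Oo_Y^\gamma$ and $\Phi^\beta : B \to B^\beta$, and $\Phi^\gamma$ is faithful by the standard positivity argument using strong continuity of $\gamma$. Equivariance gives $\Phi^\beta \circ \psi_* = \psi_* \circ \Phi^\gamma$, so if $\psi_*(a) = 0$ then $\psi_*(\Phi^\gamma(a^*a)) = \Phi^\beta(\psi_*(a^*a)) = 0$; provided $\psi_*|_{\Oo_Y^\gamma}$ is injective, faithfulness of $\Phi^\gamma$ yields $a = 0$.

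To handle the fixed-point algebra, I would invoke the identification $\Oo_Y^\gamma \cong \varinjlim_{n \in \NN^k} \Kk(Y_n)$ with connecting maps $i_m^{m+n}$. Under this identification, $\psi_*|_{\Oo_Y^\gamma}$ is the direct-limit homomorphism assembled from the maps $\psi^{(n)} : \Kk(Y_n) \to B$; compatibility of the $\psi^{(n)}$ with the connecting maps $i_m^{m+n}$ is where Cuntz-Pimsner covariance of $\psi$ enters, combined with the multiplicativity $\psi_m(x)\psi_n(y) = \psi_{m+n}(xy)$. Since each $i_m^{m+n}$ is injective (because the $\phi_n$ are), injectivity of $\psi_*|_{\Oo_Y^\gamma}$ reduces to injectivity of each individual $\psi^{(n)}$. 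Each such $\psi^{(n)}$ is then injective by a standard argument: hypothesis~(2) and the relation $\psi_n(x)^*\psi_n(x) = \psi_0(\langle x, x\rangle_A)$ yield
\[
\|\psi_n(x)\|^2 = \|\psi_0(\langle x, x\rangle_A)\| = \|x\|^2,
\]
so $\psi_n$ is isometric; approximating $S \in \Kk(Y_n)$ by finite sums of rank-one operators one verifies $\psi^{(n)}(S)\psi_n(x) = \psi_n(Sx)$, and therefore $\psi^{(n)}(T) = 0$ implies $\psi_n(T^*Tx) = \psi^{(n)}(T^*T)\psi_n(x) = 0$ for every $x$, forcing $T^*T = 0$ and hence $T = 0$.

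The main technical point is the identification of $\psi_*|_{\Oo_Y^\gamma}$ with the direct-limit map out of $\varinjlim \Kk(Y_n)$. This is precisely where Cuntz-Pimsner covariance of $\psi$ is used in an essential way, since it is the condition that makes the collection $\{\psi^{(n)}\}$ compatible with the connecting maps $i_m^{m+n}$ and so factor through the inductive limit; everything else in the argument is a routine $\TT^k$-averaging computation together with the standard injectivity argument for Toeplitz representations of a single correspondence.
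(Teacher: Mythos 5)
Your proposal is correct and follows essentially the same route as the paper's proof: reduce to the fixed-point algebra via the faithful conditional expectation $\Phi^\gamma$ and the intertwining relation $\psi_*\circ\Phi^\gamma=\Phi^\beta\circ\psi_*$, identify $\Oo_Y^\gamma$ with $\varinjlim_{n\in\NN^k}\Kk(Y_n)$, and conclude from the injectivity of each $\psi^{(n)}$, which follows from hypothesis~(2). The only difference is that you spell out the standard argument that $\psi_0$ injective implies $\psi^{(n)}$ injective, where the paper simply cites the opening paragraph of \cite[p.~202]{Pim}.
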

\begin{proof}
The map $a\mapsto \int_{\TT^k} \gamma_z(a)\,dz$ is a faithful conditional expectation $\Phi^\gamma:\Oo_Y \to \Oo_Y^\gamma$, and condition (1) ensures that the linear map
 $\Phi^\beta:\psi_*(a)\mapsto \int_{{\mathbb T}^k}\beta_z(\psi_*(a))\,dz$ satisfies $\psi_*\circ \Phi^\gamma=\Phi^\beta\circ\psi_*$.
  It therefore suffices to prove that the restriction
of  $\psi_{*}$ to $\Oo_Y^\gamma$ is injective.
We claim that $\psi_n$ is injective for
all $n\in \NN^k$. Indeed, fix a nonzero $y\in Y_n$. Then $\langle y, y\rangle_A^n\neq 0$,
and since $\langle y, y\rangle_A^n\in Y_0=A$,
we have
\[
\psi_n(y)^*\psi_n(y)=\psi_0(\langle y, y\rangle_A^n)\neq 0
\]
 by (2).
Hence $\psi_n(y)\neq 0$. Moreover, $\psi^{(n)}: \Kk(Y_n) \to  B$ is injective because
$\psi_0$ is (see the opening paragraph of \cite[p.202]{Pim}).

For all $m, n \in \NN^k$
with $m \le n$, we have $j_Y^{(m)}(\Kk(Y_m)) \subset j_Y^{(n)}(\Kk(Y_n))$. These embeddings
are compatible with the natural injection $i_m^n: \Kk(Y_m) \hookrightarrow \Kk(Y_n)$
guaranteed by our assumptions on $Y$. Moreover,
$\bigcup_n j_Y^{(n)}(\Kk(Y_n))$ is a dense subalgebra of $\Oo_Y^\gamma$, and hence
\[
\Oo_Y^\gamma \cong  \varinjlim_{n \in \NN^k}  \Kk(Y_n).
\]
It remains to show that the restriction of  $\psi_{*}$ to each $j_Y^{(n)}(\Kk(Y_n))$ is injective.
But this follows from the fact that each $\psi^{(n)} : \Kk(Y_n) \to  B$ is injective.
\end{proof}

\begin{rmk}\label{rmk:bollocks}
Restricting attention to regular $\Lambda$-systems vastly
simplifies Definition~\ref{dfn:representation} as well as the
proofs of the subsequent results --- namely
Proposition~\ref{prp:univ algs same},
Proposition~\ref{prp:preserves CP},
Corollary~\ref{cor:universal property} and
Theorem~\ref{thm:giut}. We pause to point out how complications
arise in the absence of regularity.

For a start, the results of \cite{F99} and \cite{SY} suggest
that to obtain a nicely-behaved Toeplitz algebra for
non-regular $\Lambda$-systems we would still have to restrict
attention to finitely aligned $k$-graphs $\Lambda$ and to
$\Lambda$-systems which were compactly aligned in the sense
that given compact operators $S$ on $X_\mu$ and $T$ on $X_\nu$
the product $\iota^\lambda_\mu(S)\iota^\lambda_\nu(T)$ is
compact for each minimal common extension of $\mu$ and $\nu$.
This should ensure that $Y_X$ is compactly aligned in the sense
of \cite{F99}. One would then have to add a Nica covariance
relation to those already listed in
Definition~\ref{dfn:representation}: presumably that each
$\rho^{(\mu)}(S)\rho^{(\nu)}(T)$ is equal to the sum of the
$\rho^{(\lambda)}(\iota^\lambda_\mu(S)\iota^\lambda_\nu(T))$
where $\lambda$ ranges over all minimal common extensions of
$\mu$ and $\nu$. Though the technical details of the proof of
Proposition~\ref{prp:univ algs same} would be complicated by
this, it seems clear that the result would generalise.

Since our interest here is in the Cuntz-Pimsner algebra rather
than its Toeplitz extension, the situation is further
complicated in that the issue of an appropriate notion of
Cuntz-Pimsner covariance for compactly aligned
$\Lambda$-systems over finitely aligned $\Lambda$ arises. One
would presumably have to translate the notion of Cuntz-Pimsner
covariance formulated in \cite{SY} into a corresponding notion
for $\Lambda$-systems (the introduction of~\cite{SY} gives an
account of the issues involved). A first guess would be to
impose \cite[Relation~(CP)]{SY} in the product system
associated to each boundary path of $\Lambda$. The situation
here is very unclear and the technical details formidable.
While versions of Proposition~\ref{prp:preserves CP},
Corollary~\ref{cor:universal property} and
Theorem~\ref{thm:giut} should be achievable with an appropriate
definition of Cuntz-Pimsner covariance (forthcoming work of
Carlsen, Larsen, Sims and Vittadello indicates how to address
the gauge-invariant uniqueness theorem), one would not wish to
speculate on the status of later results. For example, the
definition of the $E_x$ at the beginning of
Section~\ref{sec:fibres} is already problematic because the
linking maps  $\iota^{x(0,n)}_{x(0,m)}$ may not be injective
and may not map compacts to compacts; besides which, the
appropriate groupoid is presumably the one described in
\cite{FMY} whose unit space is much more complicated than the
infinite-path space in general.
\end{rmk}


\section{Construction of a Fell
bundle}\label{sec:theBundle}


Fix for this section a row-finite $k$-graph $\Lambda$ with no
sources, and a regular $\Lambda$-system $X$ of
$C^*$-correspondences.

\subsection{The fibres of the Fell bundle}\label{sec:fibres}
\begin{dfn}
Given an infinite path $x \in \Lambda^\infty$, let $E_x$ be the
$C^*$-algebraic inductive limit
\[
E_x := \varinjlim_{n \in \NN^k} \Kk(X_{x(0,n)} )
\]
under the connecting maps $i^{x(0,n)}_{x(0,m)} :
\Kk(X_{x(0,m)}) \to \Kk (X_{x(0,n)})$ defined by
equation~\eqref{eq:i-maps} for $(m,n) \in \Omega_k$. Let
$i^{x}_{x(0,n)} : \Kk(X_{x(0,n)}) \to E_x$ denote the canonical
embedding.
\end{dfn}
%

\begin{prop}\label{prp:Ex as corner}
Fix $x \in \Lambda^\infty$. Let $x^*X$ be the pullback
$\Omega_k$-system given by
\[
(x^*A)_m = A_{x(m)}, \quad (x^*X)_{(m,n)} = X_{x(m,n)}\quad\text{ and }
\quad (x^* \chi)_{(m,n),(n,p)} = \chi_{x(m,n),x(n,p)}.
\]
Let $(\rho, \pi)$ denote the universal representation of $x^*
X$ in $C^*(x^*A, x^*X, x^* \chi)$. Fix $n \in \NN^k$. Then
$\pi_n : A_{x(n)} \to C^*(x^*A, x^*X, x^* \chi)$ extends to
multiplier algebras; denote this extension $\tilde{\pi}_n$. Let
\[
P_n :=
\tilde{\pi}_n(1_{(x^*A)_n}) \in \Mm(C^*(x^*A, x^*X, x^*
\chi)).
\]
Then $P_n$ is full, and $E_{\sigma^n(x)} \cong P_n C^*(x^*A,
x^*X, x^* \chi) P_n$. In particular, $n = 0$ yields
\[
E_x \cong P_0 C^*(x^*A, x^*X, x^* \chi) P_0.
\]
\end{prop}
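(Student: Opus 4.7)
The plan is to first identify $C^*(x^*A, x^*X, x^*\chi)$ with $\Oo_{Y^x}$, where $Y^x$ is the product system over $\NN^k$ associated to $x^*X$ by Proposition~\ref{prp:assoc prod sys}. Its coefficient algebra is $A := \bigoplus_{p \in \NN^k} A_{x(p)}$, its fibre over $m$ is $Y^x_m = \bigoplus_p X_{x(p, p+m)}$, and $P_n \in \Mm(\Oo_{Y^x})$ is the multiplier projection onto the $A_{x(n)}$ summand. I write $(\rho, \pi)$ for the universal representation throughout.

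To see that $P_n$ is full, I will show that the closed ideal $\mathcal{I}$ generated by $P_n$ contains every $\pi_p(A_{x(p)})$. Fix $p \in \NN^k$ and choose $q \geq p \vee n$. Since $\rho_{(n,q)}(y) = P_n \rho_{(n,q)}(y)$ for $y \in X_{x(n,q)}$, the identity
\[
\rho_{(n,q)}(y)^* P_n \rho_{(n,q)}(y') = \pi_q(\langle y, y'\rangle_{A_{x(q)}})
\]
combined with fullness of $X_{x(n,q)}$ gives $\pi_q(A_{x(q)}) \subseteq \mathcal{I}$. Cuntz--Pimsner covariance then yields $\pi_p(a) = \rho^{(p,q)}(\phi_{x(p,q)}(a))$, and the approximation $\rho_{(p,q)}(y)\rho_{(p,q)}(y')^* = \lim_\gamma \rho_{(p,q)}(y) \pi_q(e_\gamma) \rho_{(p,q)}(y')^*$ (for $\{e_\gamma\}$ an approximate unit of $A_{x(q)}$) shows $\rho^{(p,q)}(\Kk(X_{x(p,q)})) \subseteq \mathcal{I}$, hence $\pi_p(A_{x(p)}) \subseteq \mathcal{I}$. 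A routine approximate-unit argument on the left then puts every $\rho_{(p,p')}(y)$ into $\mathcal{I}$, so $\mathcal{I} = \Oo_{Y^x}$.

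For the corner isomorphism I would first observe that $P_n \Oo_{Y^x} P_n \subseteq \Oo_{Y^x}^\gamma$: any spanning element $\rho_{(p, p+m)}(y) \rho_{(p', p'+m')}(y')^*$ lies in $P_p \Oo_{Y^x} P_{p+m} \cdot P_{p'+m'} \Oo_{Y^x} P_{p'}$, so cutting by $P_n$ on both sides forces $p = p' = n$, and non-vanishing of the middle factor $P_{p+m} P_{p'+m'}$ forces $p+m = p'+m'$, i.e.\ $m = m'$. Invoking the identification $\Oo_{Y^x}^\gamma \cong \varinjlim_m \Kk(Y^x_m)$ recalled in Section~\ref{sec:prodsys}, and noting that $P_n$ acts on $Y^x_m$ as the projection onto the $X_{x(n, n+m)}$ summand (so that $P_n \Kk(Y^x_m) P_n \cong \Kk(X_{x(n, n+m)})$), I conclude that $P_n \Oo_{Y^x} P_n \cong \varinjlim_m \Kk(X_{x(n, n+m)}) = E_{\sigma^n(x)}$. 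The crucial bookkeeping (and the main obstacle) is to verify that the restriction of the product-system connecting map $i_m^{m'} \colon \Kk(Y^x_m) \to \Kk(Y^x_{m'})$ to the corner $\Kk(X_{x(n, n+m)})$ agrees with the map $i^{x(n, n+m')}_{x(n, n+m)}$ of \eqref{eq:i-maps} that defines $E_{\sigma^n(x)}$; this reduces to the observation that $\Theta_{m, m'-m}$ restricted to $X_{x(n, n+m)} \otimes_{A_{x(n+m)}} X_{x(n+m, n+m')}$ coincides with $\chi_{x(n, n+m), x(n+m, n+m')}$, which is built into the construction of $Y^x$ in Proposition~\ref{prp:assoc prod sys}.
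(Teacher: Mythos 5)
Your proposal is correct, and the fullness argument is essentially the paper's (same ingredients --- Cuntz--Pimsner covariance, fullness of the correspondences, nondegeneracy --- just run in the opposite order: you first put $\pi_q(A_{x(q)})$ for $q \ge n$ into the ideal via inner products and then descend to smaller $p$ by covariance, whereas the paper first puts $A_{x(0)}$ in via covariance at degree $n$ and then propagates upward using fullness and $P_0$). Where you genuinely diverge is the corner isomorphism. The paper builds the map in the other direction: it uses the universal property of the inductive limit $E_{\sigma^n(x)}$ to assemble the homomorphisms $\rho^{(n,n+p)} : \Kk(X_{x(n,n+p)}) \to P_n C^*(x^*A,x^*X,x^*\chi)P_n$ (compatibility with the connecting maps coming from \cite[Lemma~3.10]{Pim}) into a single map $\rho^{(n,\infty)}$, checks it is isometric because each $\pi_{n+p}$ is injective, and then verifies surjectivity by computing the corner explicitly as $\clsp\{\rho_{(n,p)}(\xi)\rho_{(n,p)}(\eta)^* : p \ge n\}$. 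You instead observe that $P_n \Oo_{Y^x} P_n$ lands in the gauge-fixed-point algebra, invoke the identification $\Oo_{Y^x}^\gamma \cong \varinjlim_m \Kk(Y^x_m)$ from Section~\ref{sec:prodsys}, and cut the inductive system by $P_n$. Your route is shorter and makes explicit the structural fact that the fibre $E_{\sigma^n(x)}$ sits inside the core, but it leans on the unproved-in-this-paper statement about $\Oo_Y^\gamma$ and on the bookkeeping you correctly flag (that the restricted connecting maps $i_m^{m'}|_{\Kk(X_{x(n,n+m)})}$ coincide with the maps $i^{x(n,n+m')}_{x(n,n+m)}$ of \eqref{eq:i-maps}, and that $P_n$ is carried to $P_n$ by the connecting maps so that cutting commutes with the limit); the paper's route avoids both by exhibiting the isomorphism directly. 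One small omission: the statement also asserts that $\pi_n$ extends to multiplier algebras, which you take for granted when you call $P_n$ ``the multiplier projection onto the $A_{x(n)}$ summand''; the paper justifies this by noting that nondegeneracy of the fibres makes $j_{Y^x}|_A$ nondegenerate, hence extendible to $\Mm(A) \to \Mm(\Oo_{Y^x})$, and restricting to $\Mm(A_{x(n)})$. Worth a sentence, but not a gap in substance.
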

\begin{proof}
To see that $\pi_n$ extends to multiplier algebras, recall that
since the $X_{x(m,n)}$ are nondegenerate, the product system
$Y$ associated to $x^*X$ by Proposition~\ref{prp:assoc prod
sys} is also nondegenerate. Hence the universal representation
of $Y$ in $\Oo_Y$ extends to a unital homomorphism
$\tilde{j}_Y$ from $\Mm(A) = \Mm\big(\bigoplus_{m \in \NN^k}
A_{x(m)}\big)$ to $\Mm(\Oo_Y)$. The restriction of
$\tilde{j}_Y$ to $\Mm(A_{x(n)})$ is then the desired extension
of $\pi_n$.

To see that $P_n$ is full, let $I(P_n)$ be the ideal of
$C^*(x^*A, x^*X, x^*\chi)$ generated by $P_n$. Since $(\rho, \pi)$
is Cuntz-Pimsner covariant and
$X$ is regular, each $a \in A_{x(0)}$ satisfies $\pi_0(a) =
\rho^{(0,n)}(T)$ for some $T \in \Kk(X_{x(0,n)})$. Since
$\Kk(X_{x(0,n)}) = \clsp\{\theta_{\xi,\eta} : \xi,\eta \in
X_{x(0,n)}\}$ and
\[
\rho^{(0,n)}(\theta_{\xi,\eta}) = \rho_{(0,n)}(\xi) P_n \rho_{(0,n)}(\eta)^*,
\]
it follows that $A_{x(0)} \subset I(P_n)$, and hence that
$I(P_n)$ contains $I(P_0)$. Now for any other $m \in \NN^k$,
and any $a \in A_{x(m)}$, that $X$ is regular, and in
particular that $X_{x(0,m)}$ is full, ensures that $a \in
\clsp\{\langle \xi,\eta \rangle_{A_{x(m)}} : \xi,\eta \in
X_{x(0,m)}\}$, and in particular that
\[
\pi_m(a) \in \clsp\{\rho_{(0,m)}(\xi)^* P_0 \rho_{(0,m)}(\eta) : \xi,\eta \in
X_{x(0,m)}\}.
\]
Thus $\clsp(\bigcup_{n \in \NN^k} \pi_n(A_{x(n)}))$ belongs to
$I(P_n)$. Since each $X_{x(n)}$ is nondegenerate, it follows
that $I(P_n) = C^*(x^*A, x^*X, x^* \chi)$.

For the final statement, we will invoke the universal property
of the direct limit $E_{\sigma^n(x)} = \varinjlim
(\Kk(X_{x(n,n+p)}), i_{x(n,n+p)}^{x(n,n+q)})$. First fix $p \in
\NN^k$. By definition, $\rho^{(n,n+p)}$ is a homomorphism from
$\Kk(X_{x(n,n+p)})$ to $P_n C^*(x^*A, x^*X, x^* \chi) P_n$.
Lemma~3.10 of~\cite{Pim} shows that the $\rho^{(n,n+p)}$ are
compatible with the connecting maps $i_{x(n,n+p)}^{x(n,n+q)}$
in the sense that $\rho^{(n,n+q)} \circ i_{x(n,n+p)}^{x(n,n+q)}
= \rho^{(n, n+p)}$ for all $p$. The universal property of the
direct limit now implies that there is a unique homomorphism
$\rho^{(n,\infty)} : E_{\sigma^n(x)} \to P_n C^*(x^*A, x^*X,
x^* \chi) P_n$ such that $\rho^{(n,\infty)} \circ i^{x(n,
\infty)}_{x(n, n+p)} = \rho^{(n, n+p)}$ for all $p$. Since each
$\pi_{n+p}$ is injective, each $\rho^{(n, n+p)}$ is injective
and hence isometric, which implies that $\rho^{(n,\infty)}$ is
isometric. Finally, since
\begin{align*}
C^*(x^*A,{}& x^*X, x^* \chi) \\
    &= \clsp\{\rho_{(m,q)}(\xi) \rho_{(p,q)}(\eta)^* :  q \in \NN^k, m,p \le q,\;  \xi \in (x^* X)_{(m,q)}, \eta \in (x^* X)_{(p,q)}\},
\end{align*}
we have
\begin{align*}
P_n C^*(x^*A,{}& x^*X, x^* \chi) P_n \\
    &= \clsp\{\rho_{(n,p)}(\xi) \rho_{(n,p)}(\eta)^* : p \ge n,\;\xi,\eta \in X_{x(n,p)}\} \\
    &= \clsp\{\rho^{(n,p)}(T) : p \ge n, T \in \Kk(X_{x(n,p)})\} \\
    &= \rho^{(n,\infty)}(E_x),
\end{align*}
so that $\rho^{(n,\infty)}$ is the desired isomorphism.
\end{proof}

In the Fell bundle we will construct over the $k$-graph
groupoid $\Gg_\Lambda$, the $C^*$-algebras $E_x$ will be the
fibres over the unit space. We now turn to the construction of
the other fibres. We do this using linking algebra techniques
of~\cite{BGR}. Our first step is to associate a $C^*$-algebra
and two complementary full projections in its multiplier
algebra to each groupoid element.

\begin{ntn}
Given paths $\lambda,\mu,\nu \in \Lambda$ such that $s(\lambda)
= s(\mu) = r(\nu)$, there is a homomorphism
$i_{\lambda,\mu}^{\lambda\nu,\mu\nu} : \Ll(X_\lambda \oplus
X_\mu) \to \Ll(X_{\lambda\nu} \oplus X_{\mu\nu})$ obtained in
an analogous manner to the definition of
$i^{\alpha\beta}_\alpha$ in equation~\eqref{eq:i-maps}.
Specifically, regarding $X_\lambda \oplus X_\mu$ as a right
Hilbert $A_{r(\nu)}$ module, \cite[page~42]{Lan} gives a
homomorphism
\[
(\phi_{\nu})_* : \Ll(X_\lambda \oplus X_\mu)
   \to \Ll((X_{\lambda} \oplus X_{\mu}) \otimes_{A_{r(\nu)}} X_\nu).
\]
combining this with the isomorphism $\chi_{\lambda,\nu} \oplus
\chi_{\mu,\nu} : (X_\lambda \oplus X_\mu) \otimes_{A_{r(\nu)}}
X_{\nu} \to X_{\lambda\nu} \oplus X_{\mu\nu}$, we obtain the
desired homomorphism, namely
\[
i_{\lambda,\mu}^{\lambda\nu,\mu\nu}(S) := (\chi_{\lambda,\nu}
\oplus \chi_{\mu,\nu}) \circ (\phi_{\nu})_*(S) \circ (\chi_{\lambda,\nu}
\oplus \chi_{\mu,\nu})^{-1}.
\]

Proposition~4.7 of \cite{Lan} and that $X$ is regular imply that
$i_{\lambda,\mu}^{\lambda\nu,\mu\nu}$ restricts to an injective
approximately unital homomorphism from $\Kk(X_\lambda \oplus X_\mu)$ to
$\Kk(X_{\lambda\nu} \oplus X_{\mu\nu})$.
\end{ntn}

\begin{lem}\label{lem:connecting maps}
Given paths $\lambda,\mu,\nu \in \Lambda$ such that $s(\lambda)
= s(\mu) = r(\nu)$, let $P_{\lambda}, P_{\mu} \in \Ll(X_\lambda
\oplus X_\mu)$ be the projections onto $X_\lambda$ and $X_\mu$
respectively, and define $P_{\lambda\nu}, P_{\mu\nu} \in
\Ll(X_{\lambda\nu} \oplus X_{\mu\nu})$ similarly. Then
\begin{enumerate}
\item\label{it:i(P)} the homomorphism
    $i_{\lambda,\mu}^{\lambda\nu,\mu\nu}$ takes $P_\lambda$
    to $P_{\lambda\nu}$ and $P_\mu$ to $P_{\mu\nu}$;
\item the projections $P_\lambda$ and $P_\mu$ are
    complementary full projections in $\Mm(\Kk(X_\lambda
    \oplus X_\mu)) = \Ll(X_\lambda \oplus X_\mu)$; and
\item\label{it:P<->cpt embedding} under the canonical
    identification of $P_\lambda \Kk(X_\lambda \oplus
    X_\mu) P_\lambda$ with $\Kk(X_\lambda)$, we have
\[
P_\lambda i_{\lambda,\mu}^{\lambda\nu,\mu\nu}(S) P_\lambda =
i^{\lambda\nu}_\lambda(S)
\]
for all $S \in \Kk(X_\lambda)$, and similarly for $\mu$.
\end{enumerate}
\end{lem}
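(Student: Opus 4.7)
The overall plan is to unfold the definition of $i_{\lambda,\mu}^{\lambda\nu,\mu\nu}$ on elementary tensors in $(X_\lambda \oplus X_\mu)\otimes_{A_{r(\nu)}} X_\nu$. The substantive point is the fullness assertion in (ii); the other parts are essentially bookkeeping, and (iii) is a quick corollary of (i).

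For (i), I would compute directly. Treating $P_\lambda$ as a multiplier of $\Kk(X_\lambda \oplus X_\mu)$, the operator $(\phi_\nu)_*(P_\lambda)$ satisfies
\[
(\phi_\nu)_*(P_\lambda)\bigl((\xi_1 \oplus \xi_2)\otimes \eta\bigr) = (\xi_1 \oplus 0) \otimes \eta.
\]
Under the isomorphism $\chi_{\lambda,\nu}\oplus \chi_{\mu,\nu}$ this transports to $\chi_{\lambda,\nu}(\xi_1 \otimes \eta)\oplus 0$ in $X_{\lambda\nu}\oplus X_{\mu\nu}$, exhibiting $i_{\lambda,\mu}^{\lambda\nu,\mu\nu}(P_\lambda)$ as the projection onto the first summand, namely $P_{\lambda\nu}$. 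The statement for $\mu$ is identical.

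For (ii), complementarity is immediate from $P_\lambda + P_\mu = \id$ and $P_\lambda P_\mu = 0$. For fullness of $P_\lambda$, I would use the matrix picture
\[
\Kk(X_\lambda \oplus X_\mu) \cong \begin{pmatrix} \Kk(X_\lambda) & \Kk(X_\mu, X_\lambda) \\ \Kk(X_\lambda, X_\mu) & \Kk(X_\mu) \end{pmatrix}.
\]
The closed ideal generated by $P_\lambda$ clearly contains the full top row and left column, so the only issue is recovering the lower-right corner $\Kk(X_\mu)$. This corner is the closed span of products
\[
\theta_{\eta_2,\eta_1}\theta_{\xi_1,\xi_2} = \theta_{\eta_2 \cdot \langle \eta_1,\xi_1\rangle_{A_{s(\lambda)}},\,\xi_2}
\]
with $\eta_1,\xi_1 \in X_\lambda$ and $\eta_2,\xi_2 \in X_\mu$, so the task reduces to showing that $\{\eta_2 \cdot \langle \eta_1,\xi_1\rangle : \eta_2\in X_\mu,\ \eta_1,\xi_1\in X_\lambda\}$ is total in $X_\mu$. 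This follows from fullness of $X_\lambda$ (so that $\langle X_\lambda, X_\lambda\rangle$ is dense in $A_{s(\lambda)} = A_{s(\mu)}$) together with the automatic non-degeneracy of the right $A_{s(\mu)}$-action on $X_\mu$. Fullness of $P_\mu$ is symmetric.

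For (iii), let $\tilde S \in P_\lambda \Kk(X_\lambda \oplus X_\mu) P_\lambda$ be the image of $S \in \Kk(X_\lambda)$ under the identification. Because $\tilde S = P_\lambda \tilde S P_\lambda$, part (i) yields
\[
i_{\lambda,\mu}^{\lambda\nu,\mu\nu}(\tilde S) = P_{\lambda\nu}\, i_{\lambda,\mu}^{\lambda\nu,\mu\nu}(\tilde S)\, P_{\lambda\nu},
\]
so the result already sits in the upper-left corner of $\Kk(X_{\lambda\nu}\oplus X_{\mu\nu})$. Evaluating on an elementary tensor exactly as in~(i), $(\phi_\nu)_*(\tilde S)$ sends $(\xi \oplus 0)\otimes \eta$ to $(S\xi \oplus 0)\otimes \eta$, and applying $\chi_{\lambda,\nu}\oplus \chi_{\mu,\nu}$ produces $\chi_{\lambda,\nu}(S\xi \otimes \eta)\oplus 0$. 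Reading off the restriction to $X_{\lambda\nu}$ gives $\chi_{\lambda,\nu}\circ (\phi_\nu)_*(S)\circ \chi_{\lambda,\nu}^{-1}$, which is precisely $i^{\lambda\nu}_\lambda(S)$. The main obstacle is the density claim in (ii); once fullness is in hand, (i) and (iii) are routine compatibility checks.
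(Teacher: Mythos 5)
Your proposal is correct and follows essentially the same route as the paper: part (i) by evaluation on elementary tensors $(x_\lambda\oplus x_\mu)\otimes x_\nu$, part (iii) as the same spanning-element computation restricted to the corner, and part (ii) reduced to fullness of $X_\lambda$ and $X_\mu$ as right Hilbert modules. The only difference is that the paper dispatches the fullness of $P_\lambda$ in one sentence, whereas you spell out the standard corner argument (the ideal generated by $P_\lambda$ reaches $\Kk(X_\mu)$ because $\langle X_\lambda,X_\lambda\rangle$ is dense in $A_{s(\mu)}$ and the right action on $X_\mu$ is nondegenerate) --- a worthwhile elaboration, but the same underlying reasoning.
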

\begin{proof}
A typical spanning element of $X_{\lambda\nu} \oplus
X_{\mu\nu}$ is of the form $(\chi_{\lambda,\nu} \oplus
\chi_{\mu,\nu})((x_\lambda \oplus x_\mu) \otimes x_\nu)$
where $x_\lambda \in X_\lambda$, $x_\mu \in X_\mu$ and $x_\nu
\in X_\nu$. By definition of
$i_{\lambda,\mu}^{\lambda\nu,\mu\nu}$, we have
\[\begin{split}
i_{\lambda,\mu}^{\lambda\nu,\mu\nu}(P_\lambda )\big((\chi_{\lambda,\nu} \oplus
\chi_{\mu,\nu})((x_\lambda \oplus x_\mu) \otimes x_\nu)\big)
&= (\chi_{\lambda,\nu} \oplus \chi_{\mu,\nu})(P_\lambda(x_\lambda \oplus x_\mu) \otimes x_\nu) \\
&= (\chi_{\lambda,\nu}\oplus\chi_{\mu,\nu})(x_\lambda\oplus 0)\otimes x_\nu.
\end{split}\]
Since $(\chi_{\lambda,\nu} \oplus
\chi_{\mu,\nu})((x_\lambda \oplus x_\mu) \otimes x_\nu) =
\chi_{\lambda,\nu}(x_\lambda \otimes x_\nu) \oplus
\chi_{\mu,\nu}(x_\mu \otimes x_\nu)$, we deduce that
\[
i_{\lambda,\mu}^{\lambda\nu,\mu\nu}(P_\lambda)\big((\chi_{\lambda,\nu} \oplus
\chi_{\mu,\nu})((x_\lambda \oplus x_\mu) \otimes x_\nu)\big)
 = P_{\lambda\nu}\big((\chi_{\lambda,\nu} \oplus
\chi_{\mu,\nu})((x_\lambda \oplus x_\mu) \otimes x_\nu)\big).
\]
That $P_\lambda$ and $P_\mu$ are complementary projections is
clear from their definitions. They are full because each of
$X_\lambda$ and $X_\mu$ is a full right-Hilbert
$A_{s(\lambda)}$-module. The last assertion is verified via a
straightforward calculation using spanning elements like the
one above.
\end{proof}

For the next result, we need the following notation. For $g =
(x,n,y) \in \Gg_\Lambda$ we define
\[
D_g : = \{ ( \lambda , \mu ) : x = \lambda z , y = \mu z , d (
\lambda ) - d ( \mu ) = n \} .
\]
The set $D_g$ is ordered by the relation $(\lambda , \mu ) \le (
\lambda' , \mu' )$ if and only if $d(\lambda ) \le d ( \lambda'
)$. Moreover, we  have $(\lambda ,\mu) \le ( \lambda' , \mu'
)$ in $D_g$ if and only if there exists $\nu$ such that $\lambda' =
\lambda \nu$ and $\mu' = \mu \nu$. We may therefore form the
inductive limit
\[
\varinjlim_{( \lambda , \mu ) \in D_g} \Kk ( X_\lambda \oplus X_\mu )
\]
with respect to the connecting maps
$i_{\lambda,\mu}^{\lambda\nu,\mu\nu}$ defined in
Lemma~\ref{lem:connecting maps}. We denote the universal
inclusion maps into the direct limit by
\[
i_{\lambda,\mu}^{g} : \Kk(X_{\lambda} \oplus X_{\mu})
   \to \varinjlim_{(\lambda ,\mu) \in D_g} \Kk ( X_{\lambda} \oplus X_{\mu}) .
\]

\begin{prop} \label{prop:linking}
Fix $g = (x,n,y) \in \Gg_\Lambda$. The maps
$i_{\lambda,\mu}^{g}$ defined above extend to unital
homomorphisms of multiplier algebras such that for each
$(\lambda,\mu) \in D(g)$, the elements $i_{\lambda,\mu}^{g}(P_{\lambda})$
and $i_{\lambda,\mu}^{g}(P_{\mu})$ are complementary full
projections which do not depend on the choice of $(\lambda,\mu)
\in D(g)$. We define $P_x :=
i_{\lambda,\mu}^{g}(P_{\lambda})$ and $P_y :=
i_{\lambda,\mu}^{g}(P_{\mu})$. We then have
\begin{align*}
E_x &\cong P_x \Big(\varinjlim_{(\lambda,\mu)\in D_g} \Kk ( X_{\lambda} \oplus X_{\mu} )\Big) P_x,\text{ and}
 \\
E_y & \cong P_y \Big(\varinjlim_{(\lambda,\mu) \in D_g} \Kk ( X_{\lambda} \oplus X_{\mu} ) \Big) P_y ;
\end{align*}
so $\displaystyle \varinjlim_{(\lambda,\mu) \in D_g}
\Kk(X_{\lambda} \oplus X_{\mu})$ is a linking algebra for $E_x$
and $E_y$.
\end{prop}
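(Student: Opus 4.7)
The plan is to unpack the four assertions in the proposition and verify them using Lemma~\ref{lem:connecting maps} together with the standard machinery of $C^*$-inductive limits. First I would address the extension to multiplier algebras: since each connecting map $i_{\lambda,\mu}^{\lambda\nu,\mu\nu}$ is injective and approximately unital (as noted just before Lemma~\ref{lem:connecting maps}), each $i_{\lambda,\mu}^g$ maps approximate units to approximate units of $\varinjlim_{D_g}\Kk(X_\lambda \oplus X_\mu)$, and so extends uniquely to a unital $*$-homomorphism $\Ll(X_\lambda \oplus X_\mu) = \Mm(\Kk(X_\lambda \oplus X_\mu)) \to \Mm\big(\varinjlim_{D_g} \Kk(X_\lambda \oplus X_\mu)\big)$.

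Second, I would prove independence of the choice of $(\lambda,\mu) \in D_g$ and complementarity of $P_x$ and $P_y$. By Lemma~\ref{lem:connecting maps}(i), $i_{\lambda,\mu}^{\lambda\nu,\mu\nu}(P_\lambda)=P_{\lambda\nu}$ and similarly for $P_\mu$, so whenever $(\lambda,\mu)\le(\lambda',\mu')$ in $D_g$ we have $i_{\lambda',\mu'}^g(P_{\lambda'})=i_{\lambda,\mu}^g(P_\lambda)$; directedness of $D_g$ then yields independence. Since $P_\lambda+P_\mu=1_{\Ll(X_\lambda\oplus X_\mu)}$ and the extensions are unital, $P_x+P_y=1$. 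For fullness, Lemma~\ref{lem:connecting maps}(ii) tells us that the two-sided ideal generated by $P_\lambda$ in $\Kk(X_\lambda\oplus X_\mu)$ is all of $\Kk(X_\lambda \oplus X_\mu)$; consequently the ideal generated by $P_x$ in $\varinjlim_{D_g} \Kk(X_\lambda\oplus X_\mu)$ contains the dense subalgebra $\bigcup_{(\lambda,\mu)\in D_g} i_{\lambda,\mu}^g(\Kk(X_\lambda\oplus X_\mu))$, hence equals the whole algebra. The argument for $P_y$ is symmetric.

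Third, I would identify the corner with $E_x$. By Lemma~\ref{lem:connecting maps}(iii), the canonical identification $P_\lambda \Kk(X_\lambda\oplus X_\mu) P_\lambda \cong \Kk(X_\lambda)$ intertwines the restriction of $i_{\lambda,\mu}^{\lambda\nu,\mu\nu}$ with the map $i^{\lambda\nu}_\lambda$ appearing in the definition of $E_x$. Since cutting down by a full projection commutes with $C^*$-inductive limits,
\[
P_x \Big(\varinjlim_{(\lambda,\mu)\in D_g} \Kk(X_\lambda\oplus X_\mu)\Big) P_x \cong \varinjlim_{(\lambda,\mu)\in D_g} \Kk(X_\lambda).
\]
Because the set $\{d(\lambda) : (\lambda,\mu)\in D_g\}$ is cofinal in $\NN^k$ (for any $m\in\NN^k$ with $m \ge n\vee 0$, the pair $(x(0,m),y(0,m-n))$ lies in $D_g$), the rightmost limit agrees with $E_x = \varinjlim_m \Kk(X_{x(0,m)})$. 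The case of $E_y$ is identical, exchanging the roles of $\lambda$ and $\mu$.

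The main obstacle I anticipate is the cofinality/reindexing bookkeeping in the last step: one must check that passing to the cofinal subsystem $\{(\lambda,\mu)\in D_g\}$ and then restricting to its first (resp.\ second) coordinate reproduces exactly the inductive system defining $E_x$ (resp.\ $E_y$), with the correct connecting maps. Everything else is a fairly mechanical application of Lemma~\ref{lem:connecting maps} together with the fact that nondegenerate embeddings into $C^*$-inductive limits extend uniquely to their multiplier algebras.
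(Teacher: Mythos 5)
Your proof is correct and follows essentially the same route as the paper's: approximate unitality of the connecting maps gives the multiplier extensions, Lemma~\ref{lem:connecting maps} gives well-definedness, complementarity and fullness of $P_x$ and $P_y$, and part~(\ref{it:P<->cpt embedding}) of that lemma together with the universal property of the direct limit identifies the corners with $E_x$ and $E_y$; you merely spell out details the paper leaves implicit. One small inaccuracy: in your cofinality parenthetical, $(x(0,m), y(0,m-n))$ need not lie in $D_g$ for every $m \ge n \vee 0$ --- one needs $m \ge \ell$ where $\ell \in \NN^k$ satisfies $\sigma^\ell(x) = \sigma^{\ell - n}(y)$ --- but the set $\{d(\lambda) : (\lambda,\mu) \in D_g\}$ is still cofinal in $\NN^k$, so the argument stands.
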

\begin{proof}
First observe that the map $i_{\lambda,\mu}^{g}$ is approximately unital for every
$(\lambda,\mu)\in D_g$. This follows because all the linking
 maps
 $i_{\lambda,\mu}^{\lambda\nu,\mu\nu}: \Kk(X_\lambda \oplus X_\mu)\to
\Kk(X_{\lambda\nu} \oplus X_{\mu\nu})$
are approximately unital. Hence the map $i_{\lambda,\mu}^{g}$
extends to a unital map between the multiplier algebras.
 By the preceding lemma, the projections $P_{\lambda}$ and $P_{\mu}$
 are complementary full projections, and since the map $i_{\lambda,\mu}^{g}$
 is approximately unital, their images,
$i_{\lambda,\mu}^{g}(P_{\lambda})$ and
$i_{\lambda,\mu}^{g}(P_{\mu})$, are also complementary full
projections. By the same lemma, $i^g_{\lambda,\mu}(P_\lambda) =
i^g_{\lambda',\mu'}(P_{\lambda'})$ and $i^g_{\lambda,\mu}(P_\mu) =
i^g_{\lambda',\mu'}(P_{\mu'})$ for all $(\lambda,\mu),
(\lambda',\mu') \in D(g)$; therefore $P_x$ and $P_y$ are well defined.
Lemma~\ref{lem:connecting
maps}(\ref{it:P<->cpt embedding}) combined with the universal
property of the direct limit gives the two displayed equations
in the statement of the Proposition. The final statement is a
consequence of the preceding ones.
\end{proof}

\noindent It will frequently be useful to make the
identification:
\begin{equation} \label{eq:offdiag}
 \Kk (X_{\mu}, X_{\lambda}) = P_\lambda  \Kk ( X_{\lambda} \oplus X_{\mu} ) P_\mu.
\end{equation}

\begin{cor} \label{cor:Egdef}
For $g=(x,n,y) \in \Gg_\Lambda$, the $C^*$-algebras $E_x$ and $E_y$ are Morita-Rieffel equivalent with
 imprimitivity bimodule
\[
E_g := P_x \Big(\varinjlim_{(\lambda,\mu) \in D_g}
\Kk(X_{\lambda} \oplus X_{\mu})\Big) P_y .
\]

\noindent  Moreover, under the identification
\eqref{eq:offdiag} we have (as Banach spaces):
\[
E_g = \varinjlim_{(\lambda,\mu) \in D_g} \Kk (X_{\mu}, X_{\lambda} ) .
\]
\end{cor}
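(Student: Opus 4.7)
The plan is to combine Proposition~\ref{prop:linking} with the standard theory of linking algebras from \cite{BGR}. Set $C := \varinjlim_{(\lambda,\mu) \in D_g} \Kk(X_\lambda \oplus X_\mu)$, so Proposition~\ref{prop:linking} already furnishes complementary full projections $P_x, P_y \in \Mm(C)$ together with identifications $P_x C P_x \cong E_x$ and $P_y C P_y \cong E_y$. I would then invoke the general fact that whenever $p, q$ are complementary full projections in the multiplier algebra of a $C^*$-algebra $C$, the space $pCq$ carries a natural $(pCp)$--$(qCq)$ imprimitivity bimodule structure with operations inherited from $C$, namely $\langle a, b\rangle_{qCq} = a^* b$ and ${}_{pCp}\!\langle a, b\rangle = a b^*$, and bimodule actions by multiplication. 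Applying this with $p = P_x$ and $q = P_y$ gives the first claimed statement immediately, with the bimodule structure on $E_g = P_x C P_y$ as described.

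For the Banach space identification, I would first verify that the connecting maps $i_{\lambda,\mu}^{\lambda\nu,\mu\nu}$ restrict compatibly to the off-diagonal corners. Since by Lemma~\ref{lem:connecting maps}(\ref{it:i(P)}) these maps carry $P_\lambda$ to $P_{\lambda\nu}$ and $P_\mu$ to $P_{\mu\nu}$, they satisfy
\[
i_{\lambda,\mu}^{\lambda\nu,\mu\nu}\bigl(P_\lambda \Kk(X_\lambda \oplus X_\mu) P_\mu\bigr)
\subset P_{\lambda\nu} \Kk(X_{\lambda\nu} \oplus X_{\mu\nu}) P_{\mu\nu},
\]
and hence, under the identification \eqref{eq:offdiag}, they define injective linear connecting maps $\Kk(X_\mu, X_\lambda) \to \Kk(X_{\mu\nu}, X_{\lambda\nu})$ which fit together into an inductive system indexed by $D_g$. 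I would then observe that cutting by a pair of complementary multiplier projections commutes with forming inductive limits: the images of the $i_{\lambda,\mu}^{g}$ have dense union in $C$, and since left multiplication by $P_x$ and right multiplication by $P_y$ are bounded, the images of the restricted maps $P_\lambda \Kk(X_\lambda \oplus X_\mu) P_\mu \to P_x C P_y$ have dense union. Combined with the isometry furnished by the universal property of the inductive limit, this yields $E_g = P_x C P_y \cong \varinjlim_{(\lambda,\mu) \in D_g} \Kk(X_\mu, X_\lambda)$ as Banach spaces.

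The main obstacle is verifying that corners cut out by complementary multiplier projections commute with inductive limits, but this reduces to a routine density and isometry argument once the $P$-compatibility of $i_{\lambda,\mu}^{\lambda\nu,\mu\nu}$ supplied by Lemma~\ref{lem:connecting maps} is in hand; everything else is a direct appeal to the linking algebra machinery already applied in Proposition~\ref{prop:linking}.
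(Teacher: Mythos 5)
Your proposal is correct and follows essentially the same route as the paper: the first statement is exactly an application of the linking-algebra theorem of Brown--Green--Rieffel (\cite[Theorem~1.1]{BGR}) to the complementary full projections $P_x, P_y$ furnished by Proposition~\ref{prop:linking}, and the second follows from the identification~\eqref{eq:offdiag}. The paper leaves the compatibility of the connecting maps with the off-diagonal corners and the density argument implicit, whereas you spell them out; the content is the same.
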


\begin{proof}
The first statement follows immediately from Proposition \ref{prop:linking} and \cite[Theorem~1.1]{BGR}.
The second statement follows from the first and \eqref{eq:offdiag}.
 \end{proof}

\begin{rmk}
Let $g,h\in\Gg$ with $s(g)=s(h)=x$. Then
\[
E_{gh^{-1}}\cong E_g \otimes_{E_x} E_{h^{-1}}\cong E_g \otimes_{E_x} E_h^* \cong \Kk ( E_h , E_g ).
\]
\end{rmk}

Using the identification~\eqref{eq:offdiag}, we retain the
notation $i_{\lambda,\mu}^{\lambda\nu,\mu\nu}$ and
$i_{\lambda,\mu}^{g}$ for the embeddings
 \begin{align*}
i_{\lambda,\mu}^{\lambda\nu,\mu\nu} :   \Kk ( X_{\lambda} , X_{\mu} ) &\to
     \Kk ( X_{\lambda\nu} , X_{\mu\nu} ) , \\
     i_{\lambda,\mu}^{g}:    \Kk ( X_{\lambda} , X_{\mu} ) & \to E_g .
\end{align*}
Note that if $\lambda = \mu$ we have $\Kk ( X_{\lambda} ,
X_{\lambda} ) =  \Kk ( X_{\lambda} )$ and our embeddings
satisfy $i_{\lambda, \lambda}^{\lambda\nu, \lambda\nu} =
i_{\lambda}^{\lambda\nu} $. It follows that for all $x \in
\Lambda^\infty$, we may identify $E_{(x, 0, x)}$ with  $E_x$, and
that if $x = \lambda z$ we have $i_{\lambda, \lambda}^{(x, 0,
x)} = i_{\lambda}^{x} $.

\subsection{The operations and topology of the Fell bundle}\label{sec:operations}
The following lemma shows that the connecting maps are
compatible with composition. This will be used later to define
the multiplicative structure of the Fell bundle.

In this lemma and throughout the rest of the paper, given right
Hilbert $A$-modules $X,Y,Z$ and given $T \in \Kk(X,Y)$ and $S
\in \Kk(Y,Z)$, we will use the juxtaposition $ST$ to denote the
composition $S \circ T \in \Kk(X,Z)$.

\begin{lem}\label{lem:preserves composition}
Fix $\lambda_1, \lambda_2,\lambda_3, \mu \in
\Lambda$ such that $s(\lambda_i) = r(\mu)$ for each $i=1,2,3$. Then
for $T_i \in \Kk(X_{\lambda_{i+1}} , X_{\lambda_i} )$,
\[
i_{\lambda_1,\lambda_2}^{\lambda_1\mu,\lambda_2\mu} (T_1)
i_{\lambda_2,\lambda_3}^{\lambda_2\mu,\lambda_3\mu} (T_2) =
i_{\lambda_1,\lambda_3}^{\lambda_1\mu,\lambda_3\mu} (T_1 T_2).
\]
\end{lem}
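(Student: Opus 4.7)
The plan is to recognise both sides of the desired identity as living in a single three-fold direct sum where the analogous construction is manifestly a $*$-homomorphism, so that the multiplicativity is automatic. Specifically, I would regard $Z := X_{\lambda_1} \oplus X_{\lambda_2} \oplus X_{\lambda_3}$ as a right Hilbert $A_{s(\lambda_1)}$-module (recall $s(\lambda_1)=s(\lambda_2)=s(\lambda_3)=r(\mu)$) and set $Z' := X_{\lambda_1\mu} \oplus X_{\lambda_2\mu} \oplus X_{\lambda_3\mu}$. The direct sum $\chi := \chi_{\lambda_1,\mu} \oplus \chi_{\lambda_2,\mu} \oplus \chi_{\lambda_3,\mu}$ is an isomorphism from $Z \otimes_{A_{r(\mu)}} X_\mu$ onto $Z'$, and I would define, in complete parallel with \eqref{eq:i-maps},
\[
I : \Kk(Z) \to \Kk(Z'), \qquad I(S) := \chi \circ (\phi_\mu)_*(S) \circ \chi^{-1}.
\]
Since $(\phi_\mu)_*$ (see \cite[page 42]{Lan}) and conjugation by the isomorphism $\chi$ are each $*$-homomorphisms, so is $I$; moreover, by regularity and \cite[Proposition~4.7]{Lan}, $I$ takes values in $\Kk(Z')$.

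Next I would verify that the two-component maps $i_{\lambda_i,\lambda_j}^{\lambda_i\mu,\lambda_j\mu}$ arise as restrictions of $I$ to the appropriate corners. Let $P_i \in \Ll(Z)$ denote the projection onto $X_{\lambda_i}$ and $P'_i \in \Ll(Z')$ the projection onto $X_{\lambda_i\mu}$. Extending $I$ to multipliers, the same calculation as in Lemma~\ref{lem:connecting maps}(\ref{it:i(P)}) shows $\tilde I(P_i) = P'_i$. Extending the identification~\eqref{eq:offdiag} to three components, each $T \in \Kk(X_{\lambda_j}, X_{\lambda_i})$ is viewed as $P_i T P_j \in P_i \Kk(Z) P_j$, and unwinding the definitions of $(\phi_\mu)_*$ and $\chi$ on this corner gives
\[
I(P_i T P_j) = P'_i\, I(T)\, P'_j = i_{\lambda_i,\lambda_j}^{\lambda_i\mu,\lambda_j\mu}(T).
\]

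Finally, I would conclude as follows. Viewing $T_1 \in P_1 \Kk(Z) P_2$ and $T_2 \in P_2 \Kk(Z) P_3$, their product in $\Kk(Z)$ is $P_1 T_1 P_2 T_2 P_3 \in P_1 \Kk(Z) P_3$, which under the extended \eqref{eq:offdiag} corresponds precisely to the composition $T_1 T_2 \in \Kk(X_{\lambda_3}, X_{\lambda_1})$. Since $I$ is multiplicative,
\[
i_{\lambda_1,\lambda_2}^{\lambda_1\mu,\lambda_2\mu}(T_1)\, i_{\lambda_2,\lambda_3}^{\lambda_2\mu,\lambda_3\mu}(T_2) = I(T_1)\, I(T_2) = I(T_1 T_2) = i_{\lambda_1,\lambda_3}^{\lambda_1\mu,\lambda_3\mu}(T_1 T_2),
\]
which is the desired identity.

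The main obstacle I anticipate is purely bookkeeping: one must check that the three-component analogue of \eqref{eq:offdiag} is well-defined, that $\tilde I(P_i) = P'_i$, and that the restriction of $I$ to off-diagonal corners really does agree with the two-component maps. All of these reduce to direct verifications on elementary tensors using the associativity diagram of Definition~\ref{dfn:Lambda system}(\ref{it:sys assoc}) and the definition of $(\phi_\mu)_*$, but the notation requires some care.
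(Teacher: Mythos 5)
Your argument is correct and is exactly the paper's proof: embed the spaces $\Kk(X_{\lambda_j},X_{\lambda_i})$ as corners of $\Kk(X_{\lambda_1}\oplus X_{\lambda_2}\oplus X_{\lambda_3})$ and use that the three-fold linking map into $\Kk(X_{\lambda_1\mu}\oplus X_{\lambda_2\mu}\oplus X_{\lambda_3\mu})$ is a $*$-homomorphism. You have simply spelled out the corner bookkeeping that the paper leaves implicit.
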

\begin{proof}
Embed each $\Kk(X_{\lambda_{i+1}}, X_{\lambda_i})$ in
$\Kk(X_{\lambda_1} \oplus X_{\lambda_2} \oplus X_{\lambda_3})$
in a way analogous to~\eqref{eq:offdiag}, and then use that the
linking map
\[
\Kk(X_{\lambda_1} \oplus X_{\lambda_2} \oplus X_{\lambda_3}) \hookrightarrow
 \Kk(X_{\lambda_1\mu} \oplus X_{\lambda_2\mu} \oplus X_{\lambda_3\mu})
\]
is a homomorphism.
\end{proof}

Before setting ourselves up to define the multiplication
between the various $E_g$, we establish a simple technical
lemma that we will use a number of times.

\begin{lem}\label{lem:dense in cartesian}
Fix $n \ge 1$, a composable $n$-tuple $(g_1, \dots, g_n) \in
\Gg_\Lambda^{(n)}$, elements $e_i \in E_{g_i}$ for $i \le n$,
and $\varepsilon > 0$. There exist $x \in \Lambda^\infty$,
$\lambda_1, \dots, \lambda_{n+1} \in \Lambda r(x)$, and $T_i \in
\Kk(X_{\lambda_{i+1}}, X_{\lambda_i})$ for $i \le n$ such that,
for each $i \le n$,
\begin{equation}\label{eq:dense in cartesian}
g_i = (\lambda_i x, d(\lambda_i) - d(\lambda_{i+1}), \lambda_{i+1} x)
    \qquad\text{ and }\qquad
\|i^{g_i}_{\lambda_i, \lambda_{i+1}}(T_i) - e_i\| < \varepsilon.
\end{equation}
\end{lem}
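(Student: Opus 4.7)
The plan is to approximate each $e_i$ separately using the direct-limit description of $E_{g_i}$ furnished by Corollary \ref{cor:Egdef}, and then to align these independent approximations so that they are all indexed by pairs $(\lambda_i,\lambda_{i+1})$ living along a single common tail $x$.

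First, writing $g_i = (u_i, n_i, u_{i+1})$ with $u_i \in \Lambda^\infty$ (composability forces $s(g_i) = r(g_{i+1}) = u_{i+1}$), I would use the expression
\[
E_{g_i} \;=\; \varinjlim_{(\alpha,\beta) \in D_{g_i}} \Kk(X_\beta, X_\alpha)
\]
to select $(\alpha_i,\beta_i) \in D_{g_i}$ and $S_i \in \Kk(X_{\beta_i}, X_{\alpha_i})$ with $\|i^{g_i}_{\alpha_i,\beta_i}(S_i) - e_i\| < \varepsilon$. By the definition of $D_{g_i}$, there is $z_i \in \Lambda^\infty$ with $u_i = \alpha_i z_i$, $u_{i+1} = \beta_i z_i$, and $n_i = d(\alpha_i) - d(\beta_i)$.

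Next, since consecutive paths $u_i, u_{i+1}$ share the tail $z_i$, the relation ``shares a common tail with'' is transitive along the chain $u_1,\ldots,u_{n+1}$; so choosing $p \in \NN^k$ large enough in every coordinate produces a single $x := \sigma^p(u_1) \in \Lambda^\infty$ that is a tail of every $u_i$. Enlarging $p$ further if necessary, I can also arrange that the unique prefixes $\lambda_i$ with $u_i = \lambda_i x$ satisfy $d(\lambda_i) \ge d(\alpha_i)$ for every $i$. The factorisation property then gives unique decompositions $\lambda_i = \alpha_i \mu_i$, and the key computation
\[
d(\mu_i) \;=\; d(\lambda_i) - d(\alpha_i) \;=\; d(\lambda_{i+1}) - d(\beta_i),
\]
which follows from $d(\lambda_i)-d(\lambda_{i+1}) = n_i = d(\alpha_i)-d(\beta_i)$, combined with $\beta_i \mu_i x = u_{i+1} = \lambda_{i+1} x$ and another application of the factorisation property, forces $\lambda_{i+1} = \beta_i \mu_i$ with the \emph{same} $\mu_i$. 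In particular $(\alpha_i,\beta_i) \le (\lambda_i,\lambda_{i+1})$ in $D_{g_i}$, so $g_i$ can indeed be written as $(\lambda_i x, d(\lambda_i)-d(\lambda_{i+1}), \lambda_{i+1} x)$.

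Finally, I would set $T_i := i^{\lambda_i,\lambda_{i+1}}_{\alpha_i,\beta_i}(S_i) \in \Kk(X_{\lambda_{i+1}},X_{\lambda_i})$; compatibility of the connecting maps with the direct-limit inclusions (cf.~Proposition \ref{prop:linking} and Lemma \ref{lem:preserves composition}) gives $i^{g_i}_{\lambda_i,\lambda_{i+1}}(T_i) = i^{g_i}_{\alpha_i,\beta_i}(S_i)$, so the approximation inequality is inherited unchanged. The main obstacle, as I see it, is the combinatorial bookkeeping of the second step: ensuring that a single choice of $p$ simultaneously handles all $n+1$ infinite paths, and that the prefix $\mu_i$ produced from $\alpha_i$ coincides with the one produced from $\beta_i$. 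Both points, however, are immediate consequences of coordinate-wise bounds on $p$ and of the uniqueness clause in the factorisation property.
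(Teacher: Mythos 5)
Your proposal is correct, but it organises the argument differently from the paper. The paper proceeds by induction on $n$: the base case is exactly your first step (density of the images $i^{g}_{\alpha,\beta}(\Kk(X_\beta,X_\alpha))$ in the direct limit $E_g$), and the inductive step merges the approximation of $(g_1,\dots,g_k)$ with that of $g_{k+1}$ by choosing a minimal common extension $(\mu',\nu')\in\Lmin(\mu_{k+1},\nu_1)$ of the two prefixes of the shared infinite path $s(g_k)=r(g_{k+1})$, and then pushing all the operators forward along the connecting maps. You instead approximate all $n$ elements independently and align them in one stroke by shifting far enough down $u_1$ to reach a common tail $x=\sigma^p(u_1)$. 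Both arguments rest on the same two facts — fibrewise density coming from the direct-limit description of $E_{g_i}$ in Corollary~\ref{cor:Egdef}, and compatibility of the connecting maps so that $i^{g_i}_{\lambda_i,\lambda_{i+1}}\circ i^{\lambda_i,\lambda_{i+1}}_{\alpha_i,\beta_i}=i^{g_i}_{\alpha_i,\beta_i}$ preserves the approximation — so the content is the same; your version trades the induction and the explicit appeal to $\Lmin$ for a single ``choose $p$ large'' step, which is arguably more transparent.

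One point in your second step needs more care than you give it. The prefixes $\lambda_i$ with $u_i=\lambda_i x$ are \emph{not} unique in general (consider an eventually periodic $u_i$), and the identity $d(\lambda_i)-d(\lambda_{i+1})=n_i$, which you invoke in your ``key computation,'' is precisely what has to be arranged: it does not follow from the mere existence of factorisations $u_i=\lambda_i x$ and $u_{i+1}=\lambda_{i+1}x$, since the integer $n_i$ is part of the data of $g_i$ and is not determined by its range and source. The fix is to define $\lambda_i:=u_i(0,q_i)$ with $q_i:=p-(n_1+\cdots+n_{i-1})$, which is what actually falls out of propagating $\sigma^{d(\alpha_i)}(u_i)=\sigma^{d(\beta_i)}(u_{i+1})$ along the chain; taking $p$ large enough that $q_i\ge d(\alpha_i)$ for every $i$ then guarantees both $d(\lambda_i)-d(\lambda_{i+1})=n_i$ and the factorisations $\lambda_i=\alpha_i\mu_i$, $\lambda_{i+1}=\beta_i\mu_i$ that you need. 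With that made explicit, the rest of your argument is sound.
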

\begin{proof}
We proceed by induction on $n$. The base case $n=1$ follows
from the definition of $E_{g_1}$ as $\varinjlim_{(\lambda_1,
\lambda_{2}) \in D_{g_1}} \Kk(X_{\lambda_{2}},
X_{\lambda_1})$. Now suppose that the result holds for $n \le
k$, and fix $(g_1, \dots, g_{k+1}) \in \Gg_\Lambda^{(k+1)}$ and
$e_i \in E_{g_i}$. Apply the inductive hypothesis with $n = k$
to $(g_1, \dots, g_k)$, $(e_1, \dots, e_k)$, $\varepsilon$ and
with $n = 1$ to $(g_{k+1})$, $(e_{k+1})$, $\varepsilon$ to
obtain $y,z \in \Lambda^\infty$, $\mu_1, \dots, \mu_{k+1} \in
\Lambda r(y)$, $\nu_1, \nu_2 \in \Lambda r(z)$, $R_i \in
\Kk(X_{\mu_{i+1}}, X_{\mu_i})$ and $S \in \Kk(X_{\nu_2},
X_{\nu_1})$ with the appropriate properties.

We have $\mu_{k+1} y = s(g_k) = r(g_{k+1}) = \nu_1 z$, so
\[
\mu' := y(0, (d(\mu_{k+1}) \vee d(\nu_1)) - d(\mu_{k+1})) \quad\text{ and }\quad
\nu' := z(0, (d(\mu_{k+1}) \vee d(\nu_1)) - d(\nu_1))
\]
satisfy $(\mu',\nu') \in \Lmin(\mu_{k+1},\nu_1)$. Let
$\lambda_i := \mu_i\mu'$ for $i \le k+1$, let $\lambda_{k+2} :=
\nu_2\nu'$, and let $x := \sigma^{d(\mu')}(y)$. Then
\[
\lambda_{k+1} = \mu_{k+1}\mu' = \nu_1\nu'\quad\text{ and }\quad
\mu' x = y\text{ and }\nu'x = z.
\]
Hence $g_i = (\mu_i y, d(\mu_i) - d(\mu_{i+1}), \mu_{i+1} y) =
(\lambda_i x, d(\lambda_i) - d(\lambda_{i+1}), \lambda_{i+1}
x)$ for $i \le k$, and similarly $g_{k+1} = (\lambda_{k+1} x,
d(\lambda_{k+1}) - d(\lambda_{k+2}), \lambda_{k+2} x)$. Let
$T_i := i^{\mu_i\mu', \mu_{i+1}\mu'}_{\mu_i, \mu_{i+1}}(R_i)$
for $i \le k$ and let $T_{k+1} := i^{\nu_1\nu',
\nu_2\nu'}_{\nu_1, \nu_2}(S)$. By the compatibility of the
connecting maps in the inductive limit, we have
\[
i^{g_i}_{\lambda_i, \lambda_{i+1}}(T_i)
= \begin{cases}
    i^{g_i}_{\mu_i, \mu_{i+1}}(R_i) &\text{ if $i \le k$} \\
    i^{g_{k+1}}_{\nu_1, \nu_2}(S) &\text{ if $i = k+1$}.
\end{cases}
\]
In particular, each $\|i^{g_i}_{\lambda_i, \lambda_{i+1}}(T_i)
- e_i\| < \varepsilon$ by choice of the $S_i$ and $R$.
\end{proof}


\begin{lem} \label{lem:product}
Fix composable elements $g_1 = (x_1, n_1, x_2)$ and $g_2 =
(x_2, n_2, x_3)$ of $\Gg_\Lambda$. There is a bilinear map
$(e_1, e_2) \mapsto e_1e_2$ from $E_{g_1}\times E_{g_2}$ to
$E_{g_1g_2}$ determined as follows: if
$\lambda_1,\lambda_2,\lambda_3 \in \Lambda$ and $z \in
\Lambda^\infty$ satisfy $n_i = d(\lambda_i) -
d(\lambda_{i+1})$, and $x_i = \lambda_iz$, (so in particular
$g_i = (\lambda_iz, n_i, \lambda_{i+1}z) \in Z(\lambda_i, \lambda_{i+1})$) then for $T_i \in
\Kk(X_{\lambda_{i+1}},X_{\lambda_i})$,
\begin{equation}\label{eq:composition in limit}
i_{\lambda_1,\lambda_2}^{g_1} (T_1)
i_{\lambda_2,\lambda_3}^{g_2} (T_2) =
i_{\lambda_1,\lambda_3}^{g_1g_2} (T_1T_2).
\end{equation}
Moreover, $\|e_1 e_2 \| \le \|e_1\|\|e_2\|$. If
$g_3=(x_3,n_3,x_4)$, so that $g_2$ and $g_3$ are composable and
$e_3 \in E_{g_3}$, then $(e_1e_2)e_3 = e_1(e_2e_3)$.
\end{lem}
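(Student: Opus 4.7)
My strategy is to first use formula~\eqref{eq:composition in limit} to define the product on pairs of the form $(i^{g_1}_{\lambda_1,\lambda_2}(T_1), i^{g_2}_{\lambda_2,\lambda_3}(T_2))$ coming from a single tuple $(\lambda_1,\lambda_2,\lambda_3,z)$, to show the result is independent of the choice of tuple, and then to extend by continuity. The submultiplicativity $\|T_1 T_2\| \le \|T_1\|\|T_2\|$ of operator composition, together with the fact that each $i^{g}_{\lambda,\mu} \colon \Kk(X_\mu, X_\lambda) \to E_g$ is isometric (it is induced by the inclusion of the off-diagonal corner into the linking-algebra direct limit of Proposition~\ref{prop:linking}, whose connecting maps are injective $C^*$-homomorphisms), immediately gives bilinearity and the desired norm bound $\|e_1 e_2\| \le \|e_1\|\|e_2\|$ at the level of these tuple-elementary pairs.

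The main obstacle is to verify that this recipe is independent of the choice of tuple. Given a second tuple $(\lambda'_1, \lambda'_2, \lambda'_3, z')$ with the same properties, I would set $p := d(\lambda_2) \vee d(\lambda'_2)$, $\alpha := x_2(d(\lambda_2), p)$, $\beta := x_2(d(\lambda'_2), p)$, and $z'' := \sigma^p(x_2)$, so that $\lambda_2\alpha = \lambda'_2\beta = x_2(0,p)$, $z = \alpha z''$, and $z' = \beta z''$. Uniqueness of factorization applied to appropriate prefixes of $x_1$ and $x_3$ then forces $\lambda_1\alpha = \lambda'_1\beta$ and $\lambda_3\alpha = \lambda'_3\beta$, so setting $\lambda''_i := \lambda_i\alpha$ produces a common refinement tuple $(\lambda''_\bullet, z'')$ that represents both $g_1$ and $g_2$ and refines both original tuples. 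The direct-limit identity $i^{g_i}_{\lambda_i,\lambda_{i+1}} = i^{g_i}_{\lambda''_i,\lambda''_{i+1}} \circ i^{\lambda''_i,\lambda''_{i+1}}_{\lambda_i,\lambda_{i+1}}$, combined with Lemma~\ref{lem:preserves composition} which says that the connecting maps respect operator composition, shows that the formula applied to $(\lambda_\bullet, z)$ and to $(\lambda''_\bullet, z'')$ yields the same element of $E_{g_1 g_2}$; the analogous computation for the primed tuple completes the independence proof.

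Once independence is established, the formula gives a single bounded bilinear map on a dense linear subspace of $E_{g_1} \times E_{g_2}$ (density follows from the $n=2$ case of Lemma~\ref{lem:dense in cartesian}); a routine continuity argument extends this uniquely to a bilinear map $E_{g_1} \times E_{g_2} \to E_{g_1 g_2}$ preserving the norm bound. For associativity, I would apply Lemma~\ref{lem:dense in cartesian} with $n=3$ to approximate any triple $(e_1, e_2, e_3) \in E_{g_1} \times E_{g_2} \times E_{g_3}$ by triples $(i^{g_i}_{\lambda_i,\lambda_{i+1}}(T_i))_{i=1}^{3}$ indexed by a common infinite-path tail. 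Two applications of formula~\eqref{eq:composition in limit} show that both $(e_1 e_2) e_3$ and $e_1 (e_2 e_3)$ then equal $i^{g_1 g_2 g_3}_{\lambda_1, \lambda_4}(T_1 T_2 T_3)$, using associativity of composition of adjointable operators; joint continuity of the product (from the norm bound) extends associativity to all of $E_{g_1} \times E_{g_2} \times E_{g_3}$.
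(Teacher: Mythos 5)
Your proposal is correct and follows essentially the same route as the paper: bilinearity and the norm bound on elementary pairs via Lemma~\ref{lem:preserves composition} and the isometric embeddings $i^{g}_{\lambda,\mu}$ coming from the injective connecting homomorphisms of the linking algebras, extension to all of $E_{g_1}\times E_{g_2}$ by the density statement of Lemma~\ref{lem:dense in cartesian}, and associativity via the $n=3$ case of that lemma together with $(T_1T_2)T_3 = T_1(T_2T_3)$. Your explicit common-refinement check that the formula is independent of the representing tuple is a detail the paper compresses into the phrase ``the definition of the direct limit,'' but it is the right argument.
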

\begin{proof}
%
That~\eqref{eq:composition in limit} is bilinear follows from
Lemma~\ref{lem:preserves composition} and the definition of the
direct limit, and we then have
\[
\|i_{\lambda_1,\lambda_3}^{g_1g_2} (T_1T_2)\|
    \le \|i_{\lambda_1,\lambda_2}^{g_1} (T_1)\|\,\|i_{\lambda_2,\lambda_3}^{g_2} (T_2)\|
\]
because the $i_{\lambda_i, \lambda_{i+1}}^{g_i}$ are
restrictions of the injective $C^*$-homomorphisms
\[
i_{\lambda_i, \lambda_{i+1}}^{g_i} : \Kk(X_{\lambda_i} \oplus
X_{\lambda_{i+1}}) \to \varinjlim_{(\mu,\nu) \in D_{g_i}}
\Kk(X_\mu \oplus X_\nu)
\]
and are therefore isometric. It
follows from Lemma~\ref{lem:dense in cartesian} that the
assignment~\eqref{eq:composition in limit} extends uniquely to
the desired bilinear map $(e_1,e_2) \mapsto e_1e_2$, and that
this map satisfies $\|e_1e_2\| \le \|e_1\|\|e_2\|$. For the
final statement, use Lemma~\ref{lem:dense in cartesian} to
approximate $e_3$ by an element of the form
$i^{g_3}_{\lambda_3, \lambda_4}(T_3)$, and then
use~\eqref{eq:composition in limit} and that $(T_1T_2)T_3 =
T_1(T_2T_3)$.
\end{proof}

\noindent We now define an involution on $\prod_{g \in
\Gg_\Lambda} E_g$ which is compatible with the product
structure defined in Lemma \ref{lem:product}. Recall that for
right Hilbert $A$-modules $X, Y$ the adjoint map $T \mapsto
T^*$ defines a conjugate linear isometry from $\Kk ( X, Y)$ to
$\Kk ( Y , X )$.

\begin{lem} \label{lem:star}
For each $g = ( x , n , y) \in \Gg_\Lambda$ there is a
conjugate linear isometry $e \mapsto e^*$ from $E_g$ to
$E_{g^{-1}}$ which is determined by the following property: for
$(\lambda,\mu) \in D_g$, and
$T \in \Kk (X_\mu , X_\lambda )$,
\begin{equation} \label{eq:stardef}
i^g_{\lambda,\mu}(T)^* = i^{g^{-1}}_{\mu,\lambda}(T^*).
\end{equation}
For $e \in E_g$, we have $e^{**} = e$, the element $e^*e$ is
positive in $E_{s(g)}$ and $\| e^* e \| = \| e \|^2$. For $g_1
= ( x_1 , n_1 , x_2 )$, $g_2 = ( x_2 , n_2 , x_3 ) \in
\Gg_\Lambda$, $e_1 \in E_{g_1}$ and $e_2 \in E_{g_2}$ we have
$( e_1 e_2 )^* = e_2^* e_1^*$.
\end{lem}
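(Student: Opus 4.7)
The plan is to leverage the linking-algebra viewpoint of Proposition~\ref{prop:linking}, which realises $E_g$, $E_{g^{-1}}$, $E_{r(g)}$ and $E_{s(g)}$ as corners of a single ambient $C^*$-algebra $L_g := \varinjlim_{(\lambda,\mu) \in D_g} \Kk(X_\lambda \oplus X_\mu)$. At the level of each finite stage $\Kk(X_\lambda \oplus X_\mu)$, the $C^*$-algebra involution restricts on the $(\lambda,\mu)$-corner to the conjugate-linear isometry $T \mapsto T^*$ from $\Kk(X_\mu, X_\lambda)$ to $\Kk(X_\lambda, X_\mu)$. Because the connecting maps $i^{\lambda\nu,\mu\nu}_{\lambda,\mu}$ are restrictions of $*$-homomorphisms on the linking algebras (Lemma~\ref{lem:connecting maps}), they intertwine the adjoint. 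Using the coordinate-swap bijection $D_g \leftrightarrow D_{g^{-1}}$, which identifies $L_g$ with $L_{g^{-1}}$, the universal property of the direct limit yields a conjugate-linear isometry $E_g \to E_{g^{-1}}$ satisfying \eqref{eq:stardef}. The identity $e^{**} = e$ then follows immediately from $T^{**} = T$ and \eqref{eq:stardef}.

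For positivity of $e^*e$ in $E_{s(g)}$ and the $C^*$-identity, I would observe that the constructed involution coincides with the restriction to $E_g = P_x L_g P_y$ of the $L_g$-involution (by uniqueness, since both satisfy \eqref{eq:stardef}). For $e \in E_g$ we have $e = P_x e P_y$, hence $e^*e = P_y e^*e P_y \in P_y L_g P_y$, which Proposition~\ref{prop:linking} identifies with $E_{s(g)} = E_y$. Positivity of $e^*e$ in $L_g$ is just the $C^*$-axiom, and positivity persists in the hereditary subalgebra $P_y L_g P_y$; similarly, the norm identity $\|e^*e\| = \|e\|^2$ is inherited directly from $L_g$.

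The anti-multiplicativity $(e_1 e_2)^* = e_2^* e_1^*$ requires more care because $e_1, e_2, e_1 e_2$ naturally live in fibres built from three distinct direct systems $L_{g_1}, L_{g_2}, L_{g_1 g_2}$, so there is no single ambient linking algebra to appeal to. My strategy is to apply Lemma~\ref{lem:dense in cartesian} to approximate $e_1, e_2$ simultaneously by $i^{g_1}_{\lambda_1,\lambda_2}(T_1)$ and $i^{g_2}_{\lambda_2,\lambda_3}(T_2)$ sharing a common infinite path $x$ and common middle label $\lambda_2$. On such elementary elements, Lemma~\ref{lem:product} gives $e_1 e_2 = i^{g_1 g_2}_{\lambda_1, \lambda_3}(T_1 T_2)$, so \eqref{eq:stardef} yields $(e_1 e_2)^* = i^{(g_1 g_2)^{-1}}_{\lambda_3,\lambda_1}((T_1 T_2)^*)$. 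On the other side, two applications of \eqref{eq:stardef} followed by Lemma~\ref{lem:product} give $e_2^* e_1^* = i^{g_2^{-1}}_{\lambda_3,\lambda_2}(T_2^*) i^{g_1^{-1}}_{\lambda_2,\lambda_1}(T_1^*) = i^{g_2^{-1} g_1^{-1}}_{\lambda_3,\lambda_1}(T_2^* T_1^*)$. Since $g_2^{-1} g_1^{-1} = (g_1 g_2)^{-1}$ and $(T_1 T_2)^* = T_2^* T_1^*$ in compact operators, the two expressions agree. Continuity of multiplication and of the involution, combined with bilinearity, then extends the identity to arbitrary $e_i \in E_{g_i}$.

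The main obstacle I anticipate is the bookkeeping in this last step: aligning the swap bijections $D_{g_i} \leftrightarrow D_{g_i^{-1}}$ with the composition data used to interpret $i^{g_1 g_2}_{\lambda_1,\lambda_3}$ and $i^{(g_1 g_2)^{-1}}_{\lambda_3,\lambda_1}$, and checking that one can choose the approximating $(\lambda_1, \lambda_2, \lambda_3, x)$ so that all six direct systems are indexed compatibly. This is essentially a diligent unpacking of the definitions rather than a conceptual difficulty.
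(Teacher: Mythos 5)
Your proposal is correct and follows essentially the same route as the paper: the involution is obtained by restricting the $*$-operation of the linking-algebra direct limit $\varinjlim_{(\lambda,\mu)\in D_g}\Kk(X_\lambda\oplus X_\mu)$ to the off-diagonal corner (using the swap $D_g\leftrightarrow D_{g^{-1}}$), and anti-multiplicativity is verified on elementary elements produced by Lemma~\ref{lem:dense in cartesian} via \eqref{eq:composition in limit} and then extended by continuity. The only (harmless) divergence is in the positivity and $C^*$-identity step, where the paper computes $e^*e = i^{s(g)}_{\mu}(T^*T)$ directly on elementary elements and uses that $i^{s(g)}_{\mu}$ is an isometric homomorphism, whereas you deduce both from the fact that $e^*e$ lies in the hereditary corner $P_y L_g P_y \cong E_{s(g)}$ of the ambient linking algebra.
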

\begin{proof}
For each $(\lambda,\mu) \in D_g$, the involution on
$\Kk(X_\lambda \oplus X_\mu)$ restricts to the adjoint map from
$\Kk(X_\mu, X_\lambda)$ to $\Kk(X_\lambda, X_\mu)$. By
definition of $C^*$-algebraic direct limits, it follows that
the involution on $\varinjlim_{(\lambda,\mu) \in D_g}
\Kk(X_\lambda \oplus X_\mu)$ restricts to a conjugate linear
map from $E_g$ to $E_{g^{-1}}$ satisfying~\eqref{eq:stardef}.
This map is an isometry because the connecting maps in the
direct limit are all isometric.

To see that $e^{**} = e$, that $e^*e \ge 0$ and that $\|e^*e\|
= \|e\|^2$ for all $e \in E_x$, note that by continuity it
suffices to consider $e = i^g_{\lambda,\mu}(T)$. Two
applications of~\eqref{eq:stardef} then give $e^{**} =
(i^{g^{-1}}_{\mu,\lambda}(T^*))^* = i^g_{\lambda,\mu}(T^{**}) =
e$ because $T^{**} = T$. We have $e^*e =
i^{g^{-1}}_{\mu,\lambda}(T^*)i^g_{\lambda,\mu}(T) =
i^{s(g)}_\mu(T^*T)$ as $i^{s(g)}_\mu$ is a homomorphism. Moreover,
regarding $T$ as an element of $\Kk(X_\lambda \oplus X_\mu)$
and $i^g_{\lambda,\mu}$ as a homomorphism of $\Kk(X_\lambda
\oplus X_\mu)$, we have
\[
\|e^*e\| = \|i^g_{\lambda,\mu}(T^*T)\| = \|i^g_{\lambda,\mu}(T)\|^2 = \|e\|^2.
\]

For the final statement, it suffices by Lemma~\ref{lem:dense in
cartesian} to consider $e_i = i^{g_i}_{\lambda_i,
\lambda_{i+1}}(T_i)$ for $i = 1,2$, and then
\[
(e_1e_2)^*
    = i^{g_1g_2}_{\lambda_1, \lambda_3}(T_1T_2)^*
    = i^{g_2^{-1}g_1^{-1}}_{\lambda_3, \lambda_1}(T_2^* T_1^*)
    = i^{g_2^{-1}}_{\lambda_2, \lambda_3}(T_2^*) i^{g_1^{-1}}_{\lambda_1, \lambda_2}(T_1^*)
    = e^*_2 e^*_1.\qedhere
\]
\end{proof}

The disjoint union $\bigsqcup_{g \in \Gg_\Lambda} E_g$ will
form a Fell bundle over $\Gg_\Lambda$, but we must first endow
it with an appropriate topology. To do this, we first make it
into a Banach bundle by defining a linear space of sections of
continuous norm which is fibrewise dense.

\begin{dfn}\label{dfn:loc.const.sections}
Fix a pair $\lambda,\mu \in \Lambda$ such that $s(\lambda) =
s(\mu)$. Let $T \in \Kk(X_\mu, X_\lambda)$. We define an
element $f^{\lambda,\mu}_T \in \prod_{g \in \Gg_\Lambda} E_g$
by
\[
f^{\lambda,\mu}_T(g) = \begin{cases}
i_{\lambda,\mu}^g(T)&\text{ if $g \in Z(\lambda,\mu)$} \\
0 &\text{ otherwise.}
\end{cases}
\]
\end{dfn}

\begin{lem}\label{lem:sections}
For each $g \in \Gg_\Lambda$, the collection
\[
\{ f^{\lambda,\mu}_T(g) : (\lambda,\mu) \in D_g, T \in \Kk(X_\mu, X_\lambda)\}
\]
is a dense subspace of $E_g$. Moreover, for a fixed pair
$\lambda,\mu \in \Lambda$ with $s(\lambda) = s(\mu)$, the map
$T \mapsto f^{\lambda,\mu}_T$ is a linear map such that
$\|f^{\lambda,\mu}_T(g)\| = \|T\|$ for $g \in Z(\lambda,\mu)$;
in particular, $g \mapsto \|f^{\lambda,\mu}_T(g)\|$ is locally
constant and thus continuous.
\end{lem}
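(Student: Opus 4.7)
The plan is to handle the two parts of the lemma in turn, exploiting the description of $E_g$ as a $C^*$-algebraic inductive limit already established in Corollary~\ref{cor:Egdef}.

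For the density claim, I would fix $g \in \Gg_\Lambda$ and simply observe that by definition $f^{\lambda,\mu}_T(g) = i^g_{\lambda,\mu}(T)$ whenever $(\lambda,\mu) \in D_g$, so the collection in question is exactly $\bigcup_{(\lambda,\mu) \in D_g} i^g_{\lambda,\mu}(\Kk(X_\mu, X_\lambda))$. By Corollary~\ref{cor:Egdef}, this union is dense in $E_g$. To see that it is a vector subspace, I would pick $(\lambda_i, \mu_i) \in D_g$ and $T_i \in \Kk(X_{\mu_i}, X_{\lambda_i})$ for $i=1,2$ and use directedness of $D_g$: choose $\nu_i$ with $\lambda_1 \nu_1 = \lambda_2 \nu_2 =: \lambda$ and $\mu_1 \nu_1 = \mu_2 \nu_2 =: \mu$, so $(\lambda,\mu)$ dominates both $(\lambda_i, \mu_i)$ in $D_g$. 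The compatibility $i^g_{\lambda_i, \mu_i} = i^g_{\lambda,\mu} \circ i^{\lambda\nu_i, \mu\nu_i}_{\lambda_i, \mu_i}$ inherent in the inductive limit then allows me to express a scalar linear combination of $f^{\lambda_1,\mu_1}_{T_1}(g)$ and $f^{\lambda_2,\mu_2}_{T_2}(g)$ as a single $f^{\lambda,\mu}_S(g)$.

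For the second part, linearity of $T \mapsto f^{\lambda,\mu}_T$ is immediate from the linearity of the $i^g_{\lambda,\mu}$ on each fibre (and from the fact that $f^{\lambda,\mu}_T$ vanishes off $Z(\lambda,\mu)$). For the norm identity, I would invoke the identification~\eqref{eq:offdiag} to view each $i^g_{\lambda,\mu}$ as the restriction to the corner $P_\lambda(\cdots)P_\mu$ of the injective $*$-homomorphism $\Kk(X_\lambda \oplus X_\mu) \to \varinjlim_{(\lambda',\mu') \in D_g} \Kk(X_{\lambda'} \oplus X_{\mu'})$. The connecting maps $i^{\lambda\nu,\mu\nu}_{\lambda,\mu}$ are injective $C^*$-homomorphisms (regularity of $X$ makes them injective, as remarked just before Lemma~\ref{lem:connecting maps}), so the inductive-limit embedding is an injective $*$-homomorphism and hence isometric. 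Restriction to a corner preserves isometry, so $\|f^{\lambda,\mu}_T(g)\| = \|i^g_{\lambda,\mu}(T)\| = \|T\|$ for every $g \in Z(\lambda,\mu)$.

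Finally, local constancy of $g \mapsto \|f^{\lambda,\mu}_T(g)\|$ follows because $Z(\lambda,\mu)$ is a compact open bisection in the Hausdorff groupoid $\Gg_\Lambda$, and so is both open and closed; the norm function takes the constant value $\|T\|$ on $Z(\lambda,\mu)$ and the constant value $0$ on its (open) complement, giving continuity. I expect the only mild subtlety is the subspace verification in the first part, where one must be careful to track the inductive-limit connecting maps; everything else is essentially bookkeeping against Corollary~\ref{cor:Egdef} and the injectivity of the linking-algebra embeddings.
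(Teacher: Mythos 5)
Your proof is correct and follows essentially the same route as the paper, which simply observes that the density/subspace claim is immediate from the definition of $E_g$ as the inductive limit over the directed set $D_g$, that $T \mapsto f^{\lambda,\mu}_T$ is linear because each $i^g_{\lambda,\mu}$ is, that the norm identity holds because $i^g_{\lambda,\mu}$ is (the corner restriction of) an injective $C^*$-homomorphism and hence isometric, and that local constancy follows since $Z(\lambda,\mu)$ is clopen. Your extra care with the directedness of $D_g$ and the connecting-map compatibility just makes explicit what the paper leaves to the reader (modulo the harmless typo $i^{\lambda\nu_i,\mu\nu_i}_{\lambda_i,\mu_i}$ for $i^{\lambda_i\nu_i,\mu_i\nu_i}_{\lambda_i,\mu_i}$).
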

\begin{proof}
The first statement follows immediately from the definitions of
$f^{\lambda,\mu}_T(g)$ and of $E_g$. Fix $\lambda,\mu \in
\Lambda$ with $s(\lambda) = s(\mu)$. The map $T \mapsto
f^{\lambda,\mu}_T$ is linear because the $i^{\lambda,\mu}_g$
are linear, and for $g \in Z(\lambda,\mu)$, we have
$\|f^{\lambda,\mu}_T(g)\| = \|i^g_{\lambda,\mu}(T)\| = \|T\|$
because $i^g_{\lambda,\mu}$ is an injective $C^*$-homomorphism.
The final statement follows because $Z(\lambda,\mu)$ is both
open and closed in $\Gg_\Lambda$.
\end{proof}

We now define
\[
\Ee_X := \lsp\{f^{\lambda,\mu}_T : \lambda,\mu \in \Lambda, s(\lambda) = s(\mu), T \in \Kk(X_\mu, X_\lambda)\}.
\]
This is the collection of sections which will determine the
topology on the Fell bundle.

\begin{prop}\label{prp:Banach-bundle}
Let $E = E_X$ denote the disjoint union $\bigsqcup_{g \in
\Gg_\Lambda} E_g$, and let $\pi : E \to \Gg_\Lambda$ be the
fibre map. There is a unique topology on $E$ under which $\pi :
E \to \Gg_\Lambda$ becomes a Banach bundle and such that the
elements of $\Ee_X$ are continuous.
\end{prop}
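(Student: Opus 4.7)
The plan is to invoke the standard construction of a Banach bundle from a suitable linear space of sections; see, for example, Theorem~13.18 of Fell--Doran \emph{Representations of $*$-algebras, locally compact groups, and Banach $*$-algebraic bundles}, Vol.~1. That theorem asserts that if $\Ee$ is a complex vector space of sections of a bundle of Banach spaces $(E_g)_{g \in \Gg_\Lambda}$ such that (a) for each $g \in \Gg_\Lambda$ the set $\{f(g) : f \in \Ee\}$ is dense in $E_g$, and (b) for each $f \in \Ee$ the function $g \mapsto \|f(g)\|$ is upper semi-continuous, then there is a unique topology on $\bigsqcup_g E_g$ turning $\pi$ into a Banach bundle in which every element of $\Ee$ is a continuous section. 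So it suffices to verify (a) and (b) for $\Ee_X$.

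Condition (a) is immediate from Lemma~\ref{lem:sections}: the sections $f^{\lambda,\mu}_T$ with $(\lambda,\mu) \in D_g$ already hit a dense subspace of $E_g$, and sections $f^{\lambda,\mu}_T$ with $(\lambda,\mu) \notin D_g$ simply vanish at $g$.

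The main obstacle is (b), because sections in $\Ee_X$ are finite linear combinations $f = \sum_{i=1}^n f^{\lambda_i,\mu_i}_{T_i}$ and the supports $Z(\lambda_i,\mu_i)$ overlap in general, so $f(g)$ near a given $g$ will be a sum of varying numbers of terms. To handle this, I would apply Lemma~\ref{lem:topology decomp} to rewrite $\bigcup_{i=1}^n Z(\lambda_i,\mu_i)$ as a finite disjoint union $\bigsqcup_{j=1}^m Z(\sigma_j,\tau_j)$ with each $(\sigma_j,\tau_j) = (\lambda_{i(j)}\nu_j, \mu_{i(j)}\nu_j)$. For each $j$, let $I_j = \{i : Z(\sigma_j,\tau_j) \subseteq Z(\lambda_i,\mu_i)\}$; for every $i \in I_j$ there exists $\nu_{j,i}$ with $(\sigma_j,\tau_j) = (\lambda_i\nu_{j,i}, \mu_i\nu_{j,i})$, and the compatibility of the inductive-limit maps yields $i^g_{\lambda_i,\mu_i}(T_i) = i^g_{\sigma_j,\tau_j}(i^{\sigma_j,\tau_j}_{\lambda_i,\mu_i}(T_i))$ for every $g \in Z(\sigma_j,\tau_j)$. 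Setting
\[
S_j := \sum_{i \in I_j} i^{\sigma_j,\tau_j}_{\lambda_i,\mu_i}(T_i) \in \Kk(X_{\tau_j}, X_{\sigma_j}),
\]
we obtain $f(g) = i^g_{\sigma_j,\tau_j}(S_j) = f^{\sigma_j,\tau_j}_{S_j}(g)$ for $g \in Z(\sigma_j,\tau_j)$, and $f(g) = 0$ off $\bigsqcup_j Z(\sigma_j,\tau_j)$. Since each $i^g_{\sigma_j,\tau_j}$ is an isometric embedding, $\|f(g)\| = \|S_j\|$ on $Z(\sigma_j,\tau_j)$ and $0$ elsewhere. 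Because the $Z(\sigma_j,\tau_j)$ are compact open in the \'etale topology on $\Gg_\Lambda$ and their union (being a finite union of compact sets in a Hausdorff space) is also closed, the function $g \mapsto \|f(g)\|$ is locally constant, hence continuous; in particular it is upper semi-continuous, giving (b).

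With (a) and (b) established, the cited Fell--Doran result produces a unique topology on $E_X$ making $\pi$ a Banach bundle with all members of $\Ee_X$ continuous, completing the proof.
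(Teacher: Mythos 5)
Your overall strategy is the same as the paper's: both invoke the standard section-construction theorem for Banach bundles (the paper cites Proposition~1.6 of Fell's 1969 memoir, which is the result you quote from Fell--Doran) and reduce the problem to fibrewise density of $\{f(g) : f \in \Ee_X\}$ together with continuity of $g \mapsto \|f(g)\|$; the paper disposes of the latter in one line by observing that every $f \in \Ee_X$ ``agrees locally with functions of the form $f^{\lambda,\mu}_T$'', which is precisely what your decomposition argument is meant to make explicit. One caveat on the hypothesis you state: for a Banach bundle in Fell's sense the construction theorem requires $g \mapsto \|f(g)\|$ to be \emph{continuous}, not merely upper semicontinuous (upper semicontinuity suffices only for upper-semicontinuous bundles); since you in fact prove local constancy, this mislabelling is harmless.

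There is, however, a genuine (though repairable) gap in your disjointification step. Lemma~\ref{lem:topology decomp} produces a partition $\bigsqcup_j Z(\sigma_j,\tau_j)$ of $\bigcup_i Z(\lambda_i,\mu_i)$, but it does \emph{not} guarantee that each piece $Z(\sigma_j,\tau_j)$ is either contained in or disjoint from every $Z(\lambda_i,\mu_i)$. Consequently, with $I_j := \{i : Z(\sigma_j,\tau_j) \subseteq Z(\lambda_i,\mu_i)\}$, the identity $f(g) = i^g_{\sigma_j,\tau_j}(S_j)$ can fail at those $g \in Z(\sigma_j,\tau_j)$ that also lie in some $Z(\lambda_i,\mu_i)$ with $i \notin I_j$, and then $\|f(\cdot)\|$ need not be constant on $Z(\sigma_j,\tau_j)$. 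The repair is routine: pass to a common refinement by intersecting each $Z(\sigma_j,\tau_j)$ with each $Z(\lambda_i,\mu_i)$ and with its complement, using \eqref{eq:cylinder intersection} and \eqref{eq:cylinder setdifference}; the resulting pieces are still cylinder sets of the form $Z(\lambda_i\nu,\mu_i\nu)$ and on each of them your computation goes through verbatim. Alternatively, since continuity is local, it suffices to fix $g$, let $J = \{i : g \in Z(\lambda_i,\mu_i)\}$, and choose $(\sigma,\tau) \in D_g$ extending every $(\lambda_i,\mu_i)$ with $i \in J$ and such that $Z(\sigma,\tau)$ misses the finitely many closed sets $Z(\lambda_i,\mu_i)$, $i \notin J$; on that neighbourhood $f$ agrees with $f^{\sigma,\tau}_S$ for $S = \sum_{i \in J} i^{\sigma,\tau}_{\lambda_i,\mu_i}(T_i)$, which is what the paper's one-line argument intends.
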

\begin{proof}
By \cite[Proposition~1.6]{Fell}, it suffices to check that each
$E_g$ is a Banach space, that $g \mapsto \|f(g)\|$ is
continuous on $\Gg_\Lambda$ for each $f \in \Ee_X$, and that
for each $g \in \Gg$, the set $\{f(g) : f \in \Ee_X\}$ is dense
in $E_g$.   Let $f \in \Ee_X$ be given; to prove that $g \mapsto \|f(g)\|$ is
continuous on $\Gg_\Lambda$, it is sufficient (by  Lemma~\ref{lem:sections}) to observe
that $f$ agrees locally with functions of the form $f^{\lambda,\mu}_T$.
The other statements  follow from the definition of
$E_g$ and Lemma~\ref{lem:sections}.
\end{proof}

\begin{rmk}\label{rmk:basis}
A basis for the topology on $E_X$ obtained from
Proposition~\ref{prp:Banach-bundle}, is described in the second
paragraph of the proof of \cite[Proposition~1.6]{Fell}: it
consists of the sets
\[
W(f,U,\varepsilon) := \{e \in E : \|f(\pi(e)) - e\| < \varepsilon\text{ and } \pi(e) \in U\}
\]
where $U$ varies over open subsets of $\Gg_\Lambda$, $f$ varies
over $\Ee_X$, and $\varepsilon > 0$.

Indeed, we may restrict $U$ to vary over any basis for the
topology on $\Gg_\Lambda$. In particular, we may restrict $U$
to the basis $\Uu_\Lambda$ as defined in subsection
\ref{sec:pathgpd}.
\end{rmk}

The following lemma will be needed for the proof of the main
theorem.

\begin{lem}\label{lem:sleeve}
Fix $f_1, f_2 \in \Ee_X$, an element $g \in \Gg_\Lambda$ and an
open neighbourhood $U$ of $g$. For any $\varepsilon, \delta > 0$ such that
$\|f_1(g) - f_2(g)\| < \varepsilon - \delta$, there exists an
open neighbourhood $V$ of $g$ such that
\[
W(f_2, V, \delta) \subset W(f_1,U,\varepsilon).
\]
\end{lem}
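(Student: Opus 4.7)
The plan is to unpack the definition of the basic open sets $W(f,U,\varepsilon)$ and use a triangle-inequality argument combined with continuity of the norm on the Banach bundle.

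First I would observe that $\Ee_X$ is a linear space, so $f_1 - f_2 \in \Ee_X$ and is therefore a continuous section of the Banach bundle $E_X$ by Proposition~\ref{prp:Banach-bundle}. In a Banach bundle the map $h \mapsto \|s(h)\|$ is continuous for any continuous section $s$, so in particular $h \mapsto \|f_1(h) - f_2(h)\|$ is continuous on $\Gg_\Lambda$. Since by hypothesis $\|f_1(g) - f_2(g)\| < \varepsilon - \delta$, there is an open neighbourhood $V'$ of $g$ such that $\|f_1(h) - f_2(h)\| < \varepsilon - \delta$ for all $h \in V'$. Set $V := U \cap V'$; this is an open neighbourhood of $g$.

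Next I would check that $V$ works. Fix $e \in W(f_2, V, \delta)$, so $\pi(e) \in V$ and $\|f_2(\pi(e)) - e\| < \delta$. Since $V \subseteq U$, we have $\pi(e) \in U$. For the norm estimate, the triangle inequality gives
\[
\|f_1(\pi(e)) - e\| \le \|f_1(\pi(e)) - f_2(\pi(e))\| + \|f_2(\pi(e)) - e\| < (\varepsilon - \delta) + \delta = \varepsilon,
\]
where the first summand is bounded by $\varepsilon - \delta$ because $\pi(e) \in V \subseteq V'$. Hence $e \in W(f_1, U, \varepsilon)$, proving the required containment.

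There is no real obstacle here; the only point requiring care is to justify that $\|f_1 - f_2\|$ is continuous as a function on $\Gg_\Lambda$. This follows directly from the Banach bundle axioms once one notes that $\Ee_X$ is closed under addition, so the result is essentially immediate from Proposition~\ref{prp:Banach-bundle} and the triangle inequality.
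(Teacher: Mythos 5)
Your proof is correct and follows essentially the same route as the paper's: both use that $f_1 - f_2 \in \Ee_X$ (hence $h \mapsto \|f_1(h) - f_2(h)\|$ is continuous by Proposition~\ref{prp:Banach-bundle}), choose a neighbourhood on which this quantity stays below $\varepsilon - \delta$, and conclude by the triangle inequality. Your explicit intersection of $V'$ with $U$ is a minor tidying of a step the paper leaves implicit.
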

\begin{proof}
By Proposition \ref{prp:Banach-bundle}, each
element $f$ of $\Ee_X$ has the property that $g \mapsto
\|f(g)\|$ is continuous. In particular, since $\|f_1(g) -
f_2(g)\| < \varepsilon - \delta$, there exists a basic
neighbourhood $V$ of $g$ such that for all $h \in V$, we have
$\|f_1(h) - f_2(h)\| < \varepsilon - \delta$. We claim that
this $V$ suffices. Indeed, if $e \in W(f_2, V, \delta)$, then
in particular $\pi(e) \in V \subset U$, and $\|e -
f_1(\pi(e))\| < \varepsilon$ by the triangle inequality.
\end{proof}

\subsection{The main results}\label{sec:main}


\begin{thm} \label{thm:fellout}
Endowed with the multiplication given in
Lemma~\ref{lem:product}, the involution given in
Lemma~\ref{lem:star}, and the topology given in
Proposition~\ref{prp:Banach-bundle}, $E$ forms a saturated Fell
bundle over $\Gg_{\Lambda}$.
\end{thm}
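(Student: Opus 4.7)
The plan is to verify the axioms of Definition \ref{dfn:Fell bundle} one at a time, observing that most of the algebraic content is already in hand. Items (i)--(iv) of Definition \ref{dfn:Fell bundle} are established in Lemma \ref{lem:product}, and items (v)--(x) in Lemma \ref{lem:star}; Proposition \ref{prp:Banach-bundle} already exhibits $\pi : E \to \Gg_\Lambda$ as a Banach bundle. What remains is the global continuity of the multiplication map $E^{(2)} \to E$ and the involution map $E \to E$, together with the saturation property.

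For continuity of multiplication I would argue using the basis $\{W(f,U,\varepsilon)\}$ from Remark \ref{rmk:basis}, restricting $U$ to $\Uu_\Lambda$. Fix $(e_1, e_2) \in E^{(2)}$ and a basic neighborhood $W(f,U,\varepsilon)$ of $e_1 e_2$. By Lemma \ref{lem:dense in cartesian} applied to $(\pi(e_1), \pi(e_2))$, choose $\lambda_1, \lambda_2, \lambda_3 \in \Lambda$ and operators $T_i \in \Kk(X_{\lambda_{i+1}}, X_{\lambda_i})$ so that each $e_i$ is closely approximated by $f^{\lambda_i, \lambda_{i+1}}_{T_i}(\pi(e_i))$. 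The crucial observation is that whenever $h_1 \in Z(\lambda_1, \lambda_2)$ and $h_2 \in Z(\lambda_2, \lambda_3)$ are composable, the factorisation property forces $h_1 h_2 \in Z(\lambda_1, \lambda_3)$, and then \eqref{eq:composition in limit} yields
\[
f^{\lambda_1, \lambda_2}_{T_1}(h_1) \cdot f^{\lambda_2, \lambda_3}_{T_2}(h_2) = f^{\lambda_1, \lambda_3}_{T_1 T_2}(h_1 h_2).
\]
Combining this equality with bilinearity and Lemma \ref{lem:product}'s submultiplicativity gives, for $(e_1', e_2') \in E^{(2)}$ with $\pi(e_i') \in Z(\lambda_i, \lambda_{i+1})$, an estimate
\[
\bigl\|e_1' e_2' - f^{\lambda_1, \lambda_3}_{T_1 T_2}(\pi(e_1' e_2'))\bigr\|
    \le \|e_1'\|\bigl\|e_2' - f^{\lambda_2, \lambda_3}_{T_2}(\pi(e_2'))\bigr\|
    + \bigl\|e_1' - f^{\lambda_1, \lambda_2}_{T_1}(\pi(e_1'))\bigr\|\,\bigl\|f^{\lambda_2, \lambda_3}_{T_2}(\pi(e_2'))\bigr\|.
\]
Shrinking neighborhoods of $e_1, e_2$ appropriately and applying Lemma \ref{lem:sleeve} to the sections $f^{\lambda_1, \lambda_3}_{T_1 T_2}$ and $f$ produces basic neighborhoods $V_1, V_2$ whose product lands inside $W(f, U, \varepsilon)$. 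Orchestrating these estimates is the main obstacle of the proof.

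Continuity of the involution is an easier variant of the same strategy: the identity $f^{\lambda,\mu}_T(g)^* = f^{\mu,\lambda}_{T^*}(g^{-1})$, which is immediate from \eqref{eq:stardef}, together with the continuity of groupoid inversion and the fibrewise isometry property from Lemma \ref{lem:star}, reduces continuity of $e \mapsto e^*$ to a single invocation of Lemma \ref{lem:sleeve}.

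For saturation, fix $(g_1, g_2) \in \Gg_\Lambda^{(2)}$. Using Lemma \ref{lem:dense in cartesian}, pick paths $\lambda_1, \lambda_2, \lambda_3$ and $x \in \Lambda^\infty$ with $(\lambda_i, \lambda_{i+1}) \in D_{g_i}$ and $(\lambda_1, \lambda_3) \in D_{g_1 g_2}$. By \eqref{eq:composition in limit} and continuity of $\Theta_{m,n}$-type limits it suffices to show that $\Kk(X_{\lambda_2}, X_{\lambda_1}) \cdot \Kk(X_{\lambda_3}, X_{\lambda_2})$ is dense in $\Kk(X_{\lambda_3}, X_{\lambda_1})$. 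This is a standard linking-algebra fact: since $X_{\lambda_2}$ is full, the projection $P_{\lambda_2}$ is full in $\Kk(X_{\lambda_1} \oplus X_{\lambda_2} \oplus X_{\lambda_3})$, so the off-diagonal corners satisfy $\overline{P_{\lambda_1} K P_{\lambda_2} \cdot P_{\lambda_2} K P_{\lambda_3}} = P_{\lambda_1} K P_{\lambda_3}$, which via the identification \eqref{eq:offdiag} delivers the required density.
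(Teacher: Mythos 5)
Your proposal is correct and follows essentially the same route as the paper: the paper likewise proves continuity of multiplication via the basic neighbourhoods $W(f,U,\varepsilon)$, Lemma~\ref{lem:dense in cartesian}, the product formula \eqref{eq:composition in limit} and the two-term estimate you display, with Lemma~\ref{lem:sleeve} supplying the final nesting (the ``shrinking'' you defer is carried out there by passing to $V_i = Z(\lambda_i\nu,\lambda_{i+1}\nu)$ with $\nu = z(0,n)$ chosen so that $Z(\lambda_1\nu,\lambda_3\nu)\subset U$), and continuity of the involution and the remaining axioms are disposed of exactly as you indicate. The only cosmetic difference is saturation: the paper cites Corollary~\ref{cor:Egdef} together with the remark following Definition~\ref{dfn:Fell bundle} (each $E_g$ is an imprimitivity bimodule), whereas you unfold the same underlying fullness fact into an explicit linking-algebra density computation; both work.
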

\begin{proof}
To prove that the multiplication is continuous, fix $(g_1, g_2)
\in \Gg_\Lambda^{(2)}$, elements $e_i \in E_{g_i}$, and a basic
neighbourhood $W(f,U,\varepsilon)$ of $e_1e_2$ where $U$
belongs to the basis $\Uu_\Lambda$ described in
Remark~\ref{rmk:basis}. We must find neighbourhoods $W_i =
W(f_i, U_i, \varepsilon_i)$ of the $e_i$ such that
\[
W_1W_2 := \{v_1v_2 : v_i \in W_i, (\pi(v_1), \pi(v_2)) \in \Gg^{(2)}_\Lambda\} \subset W(f,U,\varepsilon).
\]
To do this, we first show that $W(f,U,\varepsilon)$ contains a
smaller neighbourhood of a specific form as justified by the
following claim.

\noindent\textbf{Claim.} There exist $\delta,\eta > 0$ such
that, for any compatible pair of decompositions $g_1 =
(\lambda_1z, d(\lambda_1)-d(\lambda_2),\lambda_2z)$ and $g_2 =
(\lambda_2z, d(\lambda_2) - d(\lambda_3), \lambda_3z)$, and for
any pair of operators $T_i \in \Kk(X_{\lambda_{i+1}},
X_{\lambda_i})$ satisfying
\[
\|i^{g_i}_{\lambda_i, \lambda_{i+1}}(T_i) - e_i\| < \eta,
\]
there exists $n \in \NN^k$ such that $\nu := z(0,n)$ and $V :=
Z(\lambda_1\nu,\lambda_3\nu)$ satisfy $g_1g_2 \in V \subset U$
and
\[
e_1e_2 \in W(f^{\lambda_1,\lambda_3}_{T_1 T_2}, V, \delta) \subset W(f,U,\varepsilon).
\]

To prove the claim, we choose $\delta \in (0, \epsilon/2)$ such
that $\|e_1 e_2 - f(g_1g_2)\| < \varepsilon - 2\delta$. Since
the multiplication map $(e,f) \mapsto ef$ from $E_{g_1} \times
E_{g_2}$ to $E_{g_1g_2}$ satisfies $\|ef\| \le \|e\|\|f\|$,
there exists $\eta > 0$ such that for all $a_i \in E_{g_i}$
with $\|a_i - e_i\| < \eta$, we have $\|a_1a_2 - e_1e_2\| <
\delta$. Fix $T_i \in \Kk(X_{\lambda_{i+1}},
X_{\lambda_i})$ satisfying
\[
\|i^{g_i}_{\lambda_i, \lambda_{i+1}}(T_i) - e_i\| < \eta.
\]
Then setting $a_i := i^{g_i}_{\lambda_i, \lambda_{i+1}}(T_i)$,
our choice of $\eta$ forces
\[
\|f^{\lambda_1,\lambda_3}_{T_1 T_2}(g_1g_2) - f(g_1g_2)\|
    = \|a_1 a_2 - f(g_1g_2)\|
    < \|a_1a_2 - e_1 e_2\| + \|e_1e_2 -  f(g_1g_2)\|
    < \varepsilon - \delta.
\]
By Lemma~\ref{lem:sleeve} applied to $f_1 = f$ and $f_2 =
f^{\lambda_1, \lambda_3}_{T_1T_2}$, there exists a
neighbourhood $V_0$ of $g_1g_2$ such that
\[
e_1e_2 \in W(f^{\lambda_1,\lambda_3}_{T_1 T_2}, V_0, \delta) \subset W(f,U,\varepsilon).
\]
There  exists $n \in \NN^k$ such that, with $\nu := z(0,n)$,
\[
g_1g_2 \in Z(\lambda_1\nu,\lambda_3\nu)
\subset V_0 \cap Z(\lambda_1, \lambda_3)
\]
since such sets form a neighborhood basis for $g$ (see subsection
\ref{sec:pathgpd}). Setting $V := Z(\lambda_1\nu,\lambda_3\nu)$
concludes the proof of the  claim.

Let $\delta$ and $\eta$ be as in the claim. Let
\[
M := \max\{\|e_1\|,\|e_2\|\} + 2\eta,
\]
and define
\[
\kappa := \min\Big\{\frac{\delta}{2M}, \eta\Big\}.
\]
By Lemma~\ref{lem:dense in cartesian}, we may fix $x \in
\Lambda^\infty$ and $\lambda_1, \lambda_2, \lambda_3 \in
\Lambda r(x)$ such that $g_i = (\lambda_i x, d(\lambda_i) -
d(\lambda_{i+1}), \lambda_{i+1} x)$ and $T_i \in
\Kk(X_{\lambda_{i+1}}, X_{\lambda_i})$, and such that
\begin{equation}\label{eq:Ti within kappa}
\|i^{g_i}_{\lambda_i, \lambda_{i+1}}(T_i) - e_i\| < \kappa.
\end{equation}
By the claim, there  exists $n \in \NN^k$ such that, with $\nu := z(0,n)$,
$V = Z(\lambda_1\nu,\lambda_3\nu)$ is a neighbourhood of
$g_1g_2$ contained in $U$ and satisfying
\[
e_1e_2 \in W(f^{\lambda_1,\lambda_3}_{T_1 T_2}, V, \delta) \subset W(f,U,\varepsilon).
\]
Observe that $\|T_i\| < \|e_i\| + \kappa\le\|e_i\| + \eta$, so
$M \ge \|e_i\| + 2\eta > (\|T_i\| - \eta) + 2\eta = \|T_i\| +
\eta$ for each $i$.

Let $V_i := Z(\lambda_i\nu, \lambda_{i+1}\nu)$ for $i = 1,2$,
and observe that $V = V_1V_2$. We claim that the sets
\[
W_i := W\big(f^{\lambda_i, \lambda_{i+1}}_{T_i}, V_i, \kappa\big)
\]
have the desired properties. Indeed, suppose that $b_i \in W_i$
for $i = 1, 2$, and let $h_i := \pi(b_i) \in V_i$. By Lemma
\ref{lem:product} we have
$f^{\lambda_1,\lambda_3}_{T_1T_2}(h_1h_2) =
f^{\lambda_1,\lambda_2}_{T_1}(h_1)f^{\lambda_2,\lambda_3}_{T_2}(h_2)$.
Using this fact and the estimate
\[
\|b_i\|\le\|T_i\|+\kappa\le\|e_i\|+2\kappa\le M,
\]
obtained from the definition of the $b_i$ and~\eqref{eq:Ti
within kappa}, we may calculate:
\begin{align*}
\|b_1 b_2 - f^{\lambda_1,\lambda_3}_{T_1T_2}(h_1h_2)\|
    &\le \|b_1\|\,\|b_2 - f^{\lambda_2,\lambda_3}_{T_2}(h_2)\|
        + \|b_1 - f^{\lambda_1,\lambda_2}_{T_1}(h_1)\|\,\|f^{\lambda_2,\lambda_3}_{T_2}(h_2)\| \\
    &< \kappa\|b_1\| + \kappa\|T_2\| \\
    &\le \delta.
\end{align*}
We have $e_1e_2 \in W_1W_2$ because $\|T_i - e_i\| < \kappa$
forces $e_i \in W_i$ for $i = 1,2$. This completes the proof
that multiplication in $E$ is continuous.

By~\eqref{eq:stardef}, the set $\Ee_X$ is closed under the map
$f \mapsto f^*$ where $f^*(g) := f(g^{-1})^*$. Hence, the
involution on $E_X$ is continuous.

To complete the proof that $\pi : E_X \to \Gg_\Lambda$ is a
Fell bundle, we must verify that $E_X$ is a Banach bundle,
satisfies conditions (i)--(x) of Definition~\ref{dfn:Fell
bundle}, and is saturated. It is a Banach bundle by
Proposition~\ref{prp:Banach-bundle}. It satisfies (i)--(iv) by
Lemma~\ref{lem:product}, and satisfies (v)--(x) by
Lemma~\ref{lem:star}. Finally, that $E$ is saturated follows
from Corollary~\ref{cor:Egdef} and the remark following
Definition~\ref{dfn:Fell bundle}.
\end{proof}

\begin{example}
Let $(A,X,\chi)$ be a regular $\Omega_k$-system of
$C^*$-correspondences. We have $\Omega_k^\infty = \{x_n : n \in
\NN^k\}$ where $x_n$ is the unique infinite path with range
$n$. Note that for each $n$ we have $x_n = \sigma^n(x_0)$. For
each pair $m,n \in \NN^k$ there is a unique element $g_{m,n} =
(x_m, m-n, x_n)$ of $\Gg = \Gg_{\Omega_k}$ whose range is $x_m$
and whose source is $x_n$, so $\Gg$ is isomorphic to the
complete equivalence relation $\NN^k \times \NN^k$.

The pullback $x^*_0 X$ associated to the infinite path with
range $0$ may be identified in the obvious way with $X$ itself.
Hence Proposition~\ref{prp:Ex as corner}, implies that for each
$n$, the fibre $E_{x_n}$ is isomorphic to the corresponding
corner $P_n C^*(A,X,\chi) P_n$. More generally, the fibre
$E_{m,n}$ over the groupoid element $g_{m,n}$ is the subspace
$P_m C^*(A,X,\chi) P_n$.

\end{example}

\begin{rmk} \label{rmk:product}
Fix paths $\lambda_1,\lambda_2,\lambda_3 \in \Lambda$ such that
$s(\lambda_1)=s(\lambda_2)=s(\lambda_3)$, and for $i = 1,2$ let
$T_i \in \Kk(X_{\lambda_{i+1}},X_{\lambda_i})$. It follows from
Lemma \ref{lem:product} that
\begin{equation}\label{eq:product formula}
f^{\lambda_1,\lambda_2}_{T_1}
f^{\lambda_2,\lambda_3}_{T_2} =
f^{\lambda_1,\lambda_3}_{T_1T_2}.
\end{equation}
\end{rmk}
\noindent Recall from \ref{sec:fellgroupoid} that $C_r^* (
\Gg_\Lambda, E_X )$ is defined to be the closure of $C_c (
\Gg_\Lambda , E_X )$ in the operator norm.


\begin{lem} \label{lem:fxdense}
Let $E_X$ be the Fell bundle described in Theorem
\ref{thm:fellout}. Then $\mathcal{E}_X$ is a dense
$*$-subalgebra of $C^*_r  ( \Gg_\Lambda , E_X )$.
\end{lem}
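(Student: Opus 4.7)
I plan to verify three things: first, that each element of $\mathcal{E}_X$ lies in $C_c(\Gg_\Lambda,E_X)$; second, that $\mathcal{E}_X$ is closed under the convolution product and involution inherited from $C_c(\Gg_\Lambda,E_X)$; and third, that $\mathcal{E}_X$ is dense in $C_c(\Gg_\Lambda,E_X)$ in the inductive limit topology, whence dense in $C_r^*(\Gg_\Lambda,E_X)$.

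For the first two points, note that $f^{\lambda,\mu}_T$ is supported on the compact open bisection $Z(\lambda,\mu)$, so $\mathcal{E}_X \subseteq C_c(\Gg_\Lambda,E_X)$. Involution is straightforward: using \eqref{eq:stardef}, $(f^{\lambda,\mu}_T)^*(g) = f^{\lambda,\mu}_T(g^{-1})^* = i^{g}_{\mu,\lambda}(T^*)$, so $(f^{\lambda,\mu}_T)^* = f^{\mu,\lambda}_{T^*} \in \mathcal{E}_X$. For closure under convolution, compute
\[
(f^{\lambda_1,\mu_1}_{T_1} * f^{\lambda_2,\mu_2}_{T_2})(g) = \sum_{g=g_1 g_2} f^{\lambda_1,\mu_1}_{T_1}(g_1)\, f^{\lambda_2,\mu_2}_{T_2}(g_2),
\]
which is nonzero only if $g_1 \in Z(\lambda_1,\mu_1)$ and $g_2 \in Z(\lambda_2,\mu_2)$. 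As in the discussion preceding \eqref{eq:cylinder intersection}, such a factorization forces the existence of $(\alpha,\beta) \in \Lmin(\mu_1,\lambda_2)$ and $z \in \Lambda^\infty$ with $g_1 = (\lambda_1\alpha z,\, \cdot\,, \mu_1\alpha z)$ and $g_2 = (\lambda_2\beta z,\, \cdot\,, \mu_2\beta z)$, so $g \in Z(\lambda_1\alpha,\mu_2\beta)$ with $\mu_1\alpha = \lambda_2\beta$. Using Lemma~\ref{lem:product} together with the extension of Remark~\ref{rmk:product} to absorb the $\nu$'s into the operators, one deduces
\[
f^{\lambda_1,\mu_1}_{T_1} * f^{\lambda_2,\mu_2}_{T_2} = \sum_{(\alpha,\beta) \in \Lmin(\mu_1,\lambda_2)} f^{\lambda_1\alpha,\, \mu_2\beta}_{\,i^{\lambda_1\alpha,\mu_1\alpha}_{\lambda_1,\mu_1}(T_1)\, i^{\lambda_2\beta,\mu_2\beta}_{\lambda_2,\mu_2}(T_2)},
\]
a finite sum (by row-finiteness) of elements of $\mathcal{E}_X$.

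For density, fix $f \in C_c(\Gg_\Lambda,E_X)$ with compact support $K$ and $\varepsilon > 0$. For each $g \in K$, Lemma~\ref{lem:sections} provides an element of $\mathcal{E}_X$ of the form $f^{\sigma_g,\tau_g}_{T_g}$ with $\|f^{\sigma_g,\tau_g}_{T_g}(g) - f(g)\| < \varepsilon/2$. Since $f - f^{\sigma_g,\tau_g}_{T_g}$ is continuous and the sets $Z(\sigma',\tau') \subseteq Z(\sigma_g,\tau_g)$ form a neighborhood basis at $g$, I can find a basic open set $V_g = Z(\sigma'_g,\tau'_g) \subseteq Z(\sigma_g,\tau_g)$ with $\|f(h) - f^{\sigma_g,\tau_g}_{T_g}(h)\| < \varepsilon$ for all $h \in V_g$. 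Cover $K$ by finitely many $V_{g_1},\ldots,V_{g_n}$, then apply Lemma~\ref{lem:topology decomp} to write $\bigcup_i V_{g_i} = \bigsqcup_{j=1}^m W_j$ with each $W_j = Z(\sigma'_{g_{i(j)}}\nu_j,\,\tau'_{g_{i(j)}}\nu_j) \subseteq V_{g_{i(j)}}$. By compatibility of the direct limits $i^{\sigma\nu,\tau\nu}_{\sigma,\tau}$, the restriction of $f^{\sigma_{g_{i(j)}},\tau_{g_{i(j)}}}_{T_{g_{i(j)}}}$ to $W_j$ equals an element $h_j \in \mathcal{E}_X$ of the form $f^{\sigma''_j,\tau''_j}_{T''_j}$. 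Set $h := \sum_{j=1}^m h_j \in \mathcal{E}_X$. Then $h$ is supported in $\bigsqcup_j W_j \supseteq K$, and on each $W_j$ satisfies $\|h(\ell) - f(\ell)\| < \varepsilon$; since $f$ vanishes off $K \subseteq \bigsqcup_j W_j$, this bound holds everywhere on $\Gg_\Lambda$.

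Finally I convert uniform approximation to norm approximation. Since $h - f$ is supported in the finite disjoint union of bisections $\bigsqcup_j W_j$, it decomposes as $h - f = \sum_j (h-f)\chi_{W_j}$, and the standard estimate for reduced Fell-bundle algebras over étale groupoids (each summand is supported in a single bisection, where $\|\cdot\|_{C_r^*} = \|\cdot\|_\infty$) yields $\|h - f\|_{C_r^*} \le m\varepsilon$. The main obstacle is the bookkeeping in this last step: the refinement count $m$ depends on the cover, which depends on $\varepsilon$; this is handled by first fixing any cover of $K$ by basic sets $V_1,\ldots,V_n$ to fix $m$, and then choosing the pointwise error tolerance $\varepsilon/m$ when selecting the approximating sections $f_{g_i}$. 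Since $\mathcal{E}_X$ is a $*$-subalgebra of the dense subspace $C_c(\Gg_\Lambda,E_X)$ and approximates arbitrary elements of $C_c(\Gg_\Lambda,E_X)$ arbitrarily well in operator norm, it is a dense $*$-subalgebra of $C_r^*(\Gg_\Lambda,E_X)$.
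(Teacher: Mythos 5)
Your treatment of the $*$-subalgebra part is correct and is essentially the paper's own argument: the involution formula $(f^{\lambda,\mu}_T)^* = f^{\mu,\lambda}_{T^*}$ and the convolution formula
\[
f^{\lambda_1,\mu_1}_{T_1} f^{\lambda_2,\mu_2}_{T_2}
  = \sum_{(\alpha,\beta)\in\Lmin(\mu_1,\lambda_2)}
    f^{\lambda_1\alpha,\,\mu_2\beta}_{\,i^{\lambda_1\alpha,\mu_1\alpha}_{\lambda_1,\mu_1}(T_1)\,
        i^{\lambda_2\beta,\mu_2\beta}_{\lambda_2,\mu_2}(T_2)}
\]
agree exactly with what the paper proves (with its $\nu,\tau$ replaced by your $\lambda_2,\mu_2$), and the finiteness of $\Lmin(\mu_1,\lambda_2)$ and the compatibility of the connecting maps are invoked correctly.

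There is, however, a genuine gap in your final step. You correctly identify the obstacle --- the disjointified refinement count $m$ depends on how small the neighbourhoods $V_g$ must be, which depends on the tolerance --- but your proposed repair is circular: if you fix a cover $V_1,\dots,V_n$ of $K$ in advance and only then ask for approximation within $\varepsilon/m$, Lemma~\ref{lem:sections} gives you sections $f^{\sigma,\tau}_T$ that achieve this tolerance only on \emph{new, smaller} neighbourhoods of each point, so you are forced back to a finer cover with a larger count $m'$, and your bound becomes $m'(\varepsilon/m)$, which is not controlled. The paper's resolution avoids the factor of $m$ altogether by reordering the two decompositions: first write $f = \sum_{j=1}^{N} f\chi_{U_j}$ with the $U_j$ disjoint elements of $\Uu_\Lambda$ covering $\operatorname{supp} f$ (this uses Lemma~\ref{lem:topology decomp} once, so $N$ is fixed independently of $\varepsilon$), and then approximate each piece $f\chi_{U_j}$ uniformly within $\varepsilon/N$ by a sum of elements of $\Ee_X$ supported on disjoint subsets of $U_j$. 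However fine that inner subdivision needs to be, the resulting difference is still a section supported on the single bisection $U_j$, where (as you note) the reduced norm coincides with the uniform norm; so each piece contributes at most $\varepsilon/N$ and the triangle inequality gives $\varepsilon$. Your argument contains all the ingredients for this --- you just need to fix the outer, bisection-level decomposition before introducing $\varepsilon$, and let only the inner subdivision depend on $\varepsilon$.
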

\begin{proof}
Each element of $\Ee_X$ is a linear combination of continuous
sections supported on compact sets of the form $Z(\lambda,
\mu)$. Hence $\Ee_X \subset C_c ( \Gg_\Lambda , E_X )$. As
noted above, equation~\eqref{eq:stardef} implies that $\Ee_X$
is closed under involution. To show that $\Ee_X$ is closed
under convolution, fix $\lambda,\mu,\nu,\tau \in \Lambda$ with
$s(\lambda) = s(\mu)$ and $s(\nu) = s(\tau)$ and fix $T_1 \in
\Kk(X_{\mu}, X_\lambda)$ and $T_2 \in \Kk(X_\tau, X_\nu)$. We
must show that $f^{\lambda,\mu}_{T_1} f^{\nu,\tau}_{T_2} \in
\Ee_X$. To calculate the product, fix $g \in \Gg_\Lambda$.
Since $\Gg_\Lambda$ is \'etale, we have
\[
\big(f^{\lambda,\mu}_{T_1}  f^{\nu,\tau}_{T_2}\big)(g)
    = \sum_{g_1g_2 = g} i^{g_1}_{\lambda,\mu}(T_1) i^{g_2}_{\nu,\tau}(T_2).
\]
Suppose that $g_1,g_2 \in \Gg_\Lambda$ satisfy $g_1g_2 = g$ and
$i^{g_1}_{\lambda,\mu}(T_1) i^{g_2}_{\nu,\tau}(T_2) \not = 0$.
Then $g_1 = (\lambda x, d(\lambda) - d(\mu), \mu x)$ for some
$x \in \Lambda^\infty$, and $g_2 = (\nu y, d(\nu) - d(\tau),
\tau y)$ for some $y \in \Lambda^\infty$. Since $(g_1,g_2) \in
\Gg^{(2)}$, we have
\[
\mu x = s(g_1) = r(g_2) = \nu y,
\]
so there is a unique $(\alpha,\beta) \in \Lmin(\mu,\nu)$ such
that $z := \sigma^{d(\mu) \vee d(\nu)}(\mu x)$ satisfies
\[
\mu \alpha z = \mu x = \nu y = \nu \beta z.
\]
Let $m = d(\lambda) - d(\mu)$ and $n = d(\nu) - d(\tau)$.
Then
\begin{align*}
d(\lambda\alpha) - d(\tau\beta)
    &= d(\lambda) + \big((d(\mu) \vee d(\nu)) - d(\mu)\big) - \big(d(\tau) + \big((d(\mu) \vee d(\nu)) - d(\nu)\big) \big) \\
    &= d(\lambda) - d(\mu) - d(\tau) + d(\nu) \\
    &= m + n.
\end{align*}
We then have $g = g_1g_2 = (\lambda x, n + m, \tau y) =
(\lambda\alpha z, d(\lambda\alpha) - d(\tau\beta), \tau\beta
z)$.

We conclude that if $\big(f^{\lambda,\mu}_{T_1}
f^{\nu,\tau}_{T_2}\big)(g)$ is nonzero, then there is a unique
$(\alpha,\beta) \in \Lmin(\mu,\nu)$ such that
\[
g  = (\lambda\alpha z, d(\lambda\alpha) - d(\tau\beta), \tau\beta z) \in Z(\lambda\alpha,\tau\beta),
\]
and in this case,
\begin{align*}
\big(f^{\lambda,\mu}_{T_1} f^{\nu,\tau}_{T_2}\big)(g)
    &= i^{(\lambda x, m, \mu x)}_{\lambda,\mu}(T_1) i^{(\nu y, n, \tau y)}_{\nu,\tau}(T_2) \\
    &= i_{\lambda\alpha, \mu\alpha}^{(\lambda x, m, \mu x)}\big(i^{\lambda\alpha, \mu\alpha}_{\lambda,\mu}(T_1)\big)
       i_{\nu\beta, \tau\beta}^{(\nu y, n, \tau y)}\big(i^{\nu\beta, \tau\beta}_{\nu,\tau}(T_2)\big) \\
    &= i_{\lambda\alpha, \tau\beta}^g\big(i^{\lambda\alpha, \mu\alpha}_{\lambda,\mu}(T_1) i^{\nu\beta, \tau\beta}_{\nu,\tau}(T_2) \big) \\
    &= f^{\lambda\alpha,\tau\beta}_{i^{\lambda\alpha, \mu\alpha}_{\lambda,\mu}(T_1) i^{\nu\beta, \tau\beta}_{\nu,\tau}(T_2)}(g).
\end{align*}
It follows that
\[
(f^{\lambda,\mu}_{T_1}  f^{\nu,\tau}_{T_2})(g)
    = \begin{cases}
    f^{\lambda\alpha,\tau\beta}_{i^{\lambda\alpha, \mu\alpha}_{\lambda,\mu}(T_1) i^{\nu\beta, \tau\beta}_{\nu,\tau}(T_2)}(g)
        &\text{if $g \in Z(\lambda\alpha,\tau\beta)$ for some $(\alpha,\beta) \in \Lmin(\mu,\nu)$} \\
    0 &\text{otherwise.}
    \end{cases}
\]
Hence
\[
f^{\lambda,\mu}_{T_1}  f^{\nu,\tau}_{T_2}
    = \sum_{(\alpha,\beta) \in \Lmin(\mu,\nu)}
        f^{\lambda\alpha,\tau\beta}_{i^{\lambda\alpha, \mu\alpha}_{\lambda,\mu}(T_1) i^{\nu\beta, \tau\beta}_{\nu,\tau}(T_2)},
\]
which belongs to $\Ee_X$ as required.

It remains to prove that $\Ee_X$ is dense in
$C^*_r(\Gg_\Lambda, E_X)$. Since $C_c( \Gg_\Lambda , E_X)$ is
dense in $C^*_r(\Gg_\Lambda, E_X)$ it suffices to show that we
may approximate each $f \in C_c ( \Gg_\Lambda , E_X )$ by an
element of $\Ee_X$. Given $f \in C_c(\Gg_\Lambda, E_X)$, we may
rewrite $f$ as a finite sum of sections,  each of which is
supported on an element  of $\Uu_\Lambda$. So it suffices to
consider the case where the support of $f$ is contained in $U
\in \Uu_\Lambda$. On such sections, the uniform norm agrees
with the $C^*$-norm. Fix $\varepsilon > 0$. For each point $g
\in U$ fix $f_g \in \Ee_X$ such that $\|f_g(g) - f(g)\| <
\varepsilon/2$. Then there is an element $V_g$ of $\Uu_\Lambda$
containing $g$ such that $\|f_g(h) - f(h)\| < \varepsilon$ for
all $h \in V_g$. We pass to a finite subcover and then
disjointize to obtain a cover of $U$ by disjoint compact open
sets $V_1, \dots, V_n$ and elements $f_1, \dots f_n \in \Ee_X$
such that the support of each $f_i$ is contained in $V_i$ and
each $f_i$ is within $\varepsilon$ of $f$ uniformly on $V_i$.
Now $\sum^n_{i=1} f_i$ approximates $f$ within $\varepsilon$.
\end{proof}

To reduce confusion in the next two results and their proofs, we
adopt the following notation. For $\lambda \in \Lambda$ and
$\xi \in X_\lambda$, there is an element of
$\Kk(A_{s(\lambda)}, X_\lambda)$ defined by $a \mapsto \xi
\cdot a$; we denote this operator by $l_\xi$. In particular,
for $a \in A_v$, we write $l_a$ for the compact operator on
$X_v = A_v$ implemented by left multiplication by $a$. Observe
that $l_\xi^* \in \Kk(X_\lambda, A_{s(\lambda)})$ is defined by
$l_\xi^*(\eta) := \langle \xi, \eta\rangle_{A_{s(\lambda)}}$,
and in particular that $l_\xi l_\eta^* = \theta_{\xi, \eta}$
whilst $l^*_\xi l_\eta =
l_{\langle\xi,\eta\rangle_{A_{s(\lambda)}}}$.

\begin{prop}\label{prp:repn in Cr(E)}
The assignments $\pi_v(a) := f^{v,v}_{l_a}$ and
$\rho_\lambda(x) := f^{\lambda, s(\lambda)}_{l_x}$ determine a
Cuntz-Pimsner covariant representation $(\rho, \pi)$ of
$(A,X,\chi)$ in $C^*_r(\Gg_\Lambda, E_X)$.
\end{prop}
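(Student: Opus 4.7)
The plan is to verify the four relations in Definition~\ref{dfn:representation} one at a time, using Lemma~\ref{lem:star}, Remark~\ref{rmk:product}, and the general convolution formula for elements of $\Ee_X$ worked out in the proof of Lemma~\ref{lem:fxdense}. Relation~(\ref{rel:rho=pi}) is immediate from the definitions, since under the identification $X_v = {}_{\id}A_v$, both $\rho_v(a)$ and $\pi_v(a)$ equal $f^{v,v}_{l_a}$.

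For relation~(\ref{rel:rho multiplicative}), I compute
\[
\rho_\alpha(x)\rho_\beta(y) = f^{\alpha, s(\alpha)}_{l_x} f^{\beta, s(\beta)}_{l_y}
\]
using the convolution formula from Lemma~\ref{lem:fxdense}. Since the middle indices are $s(\alpha)$ and $\beta$, the set $\Lmin(s(\alpha),\beta)$ is empty when $s(\alpha) \neq r(\beta)$ (giving zero) and equals $\{(\beta, s(\beta))\}$ when $s(\alpha) = r(\beta)$. In the latter case the formula collapses to $f^{\alpha\beta, s(\beta)}_{T}$ where $T = i^{\alpha\beta, \beta}_{\alpha, s(\alpha)}(l_x) \cdot l_y$. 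Unwinding the definition~\eqref{eq:i-maps} and using Definition~\ref{dfn:Lambda system}(\ref{it:sys theta}), one checks that $i^{\alpha\beta, \beta}_{\alpha, s(\alpha)}(l_x)$ acts on $z \in X_\beta$ as $z \mapsto \chi_{\alpha,\beta}(x \otimes z)$; composing with $l_y$ and using that the tensor is balanced yields $T = l_{\chi_{\alpha,\beta}(x \otimes y)}$, which is precisely $\rho_{\alpha\beta}(\chi_{\alpha,\beta}(x \otimes y))$.

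For relation~(\ref{rel:rho and inner-product}), Lemma~\ref{lem:star} gives $\rho_\alpha(x)^* = f^{s(\alpha),\alpha}_{l_x^*}$, so I again apply the convolution formula. With $d(\alpha) = d(\beta)$, the set $\Lmin(\alpha,\beta)$ is empty unless $\alpha = \beta$, in which case it is $\{(s(\alpha), s(\beta))\}$. The formula then reduces to $f^{s(\alpha), s(\alpha)}_{l_x^* l_y}$, and a direct computation gives $l_x^* l_y = l_{\langle x, y\rangle_{A_{s(\alpha)}}}$, so this equals $\pi_{s(\alpha)}(\langle x, y\rangle_{A_{s(\alpha)}})$ as required.

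Finally, for Cuntz-Pimsner covariance~(\ref{rel:rho,pi covariant}), I first use~\eqref{eq:product formula} to observe that $\rho^{(\lambda)}(\theta_{x,y}) = \rho_\lambda(x)\rho_\lambda(y)^* = f^{\lambda,\lambda}_{\theta_{x,y}}$, so by linearity and continuity $\rho^{(\lambda)}(T) = f^{\lambda,\lambda}_T$ for every $T \in \Kk(X_\lambda)$. Then for $v \in \Lambda^0$, $n \in \NN^k$ and $a \in A_v$, the decomposition $Z(v) = \bigsqcup_{\lambda \in v\Lambda^n} Z(\lambda,\lambda)$ reduces the verification of $\pi_v(a) = \sum_{\lambda \in v\Lambda^n} f^{\lambda,\lambda}_{\phi_\lambda(a)}$ to proving that, for every $g \in Z(\lambda,\lambda)$, one has $i^g_{v,v}(l_a) = i^g_{\lambda,\lambda}(\phi_\lambda(a))$. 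By compatibility of the maps in the inductive limit, this reduces to the identity $i^{\lambda,\lambda}_{v,v}(l_a) = \phi_\lambda(a)$ in $\Kk(X_\lambda)$, which I verify by unwinding~\eqref{eq:i-maps} and using Definition~\ref{dfn:Lambda system}(\ref{it:sys theta}). The most delicate step throughout is this bookkeeping with the linking maps $i^{\alpha\beta,\mu\beta}_{\alpha,\mu}$ applied to operators of the form $l_x$; regularity enters crucially to ensure that $\phi_\lambda(a) \in \Kk(X_\lambda)$ so that the identification makes sense.
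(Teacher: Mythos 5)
Your proof is correct and follows essentially the same route as the paper's: the paper verifies each relation by computing the convolution sums $\sum_{g_1g_2=g}$ from scratch, whereas you specialize the general product formula already established in the proof of Lemma~\ref{lem:fxdense}, together with \eqref{eq:stardef} and the identity $\phi_\lambda(a)=i^{\lambda}_{r(\lambda)}(l_a)$ --- the same ingredients, packaged once rather than re-derived each time. The one omission is that Definition~\ref{dfn:representation} requires each $\pi_v$ to be a $*$-homomorphism, which you should record explicitly; it follows immediately from your own machinery, since $f^{v,v}_{l_a}f^{v,v}_{l_b}=f^{v,v}_{l_{ab}}$ by Remark~\ref{rmk:product} and $(f^{v,v}_{l_a})^{*}=f^{v,v}_{l_{a^*}}$ by \eqref{eq:stardef}.
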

\begin{proof}
Throughout this proof, we use $1_{Z(\lambda,\mu)}$ to denote
the indicator function of a cylinder set in $\Gg_\Lambda$.

We first show that each $\pi_v$ is a $C^*$-homomorphism. They
are linear by definition. We have $1_{Z(v,v)}(g) = 1$ if and
only if $g = g^{-1} = (x,0,x)$ for some $x \in
v\Lambda^\infty$, and since $a \mapsto l_a$ is a homomorphism
we obtain $\pi_v(a)^* = (f^{v,v}_{l_a})^*(g) = 1_{Z(v,v)}(g)
i^{g}_v(l_{a^*}) = f^{v,v}_{a^*}(g) = \pi_v(a^*)$
from~\eqref{eq:stardef}. Similarly,
\[\begin{split}
\pi_v(a) \pi_v(b)
    = (f^{v,v}_{l_a} f^{v,v}_{l_b})(g)
    &= \sum_{g_1g_2 = g} 1_{Z(v,v)}(g_1)1_{Z(v,v)}(g_2) i^{g_1}_v(a)i^{g_2}_v(b) \\
    &= 1_{Z(v,v)}(g) i^g_v(a)i^g_v(b)
    = f^{v,v}_{l_{ab}}(g)
    = \pi_v(ab).
\end{split}\]
since $i^g_v$ and $a \mapsto l_a$ are homomorphisms.

The $\rho_\lambda$ are clearly linear maps, and $\rho_v =
\pi_v$ for $v \in \Lambda^0$ by definition.

We must next check the multiplicative property
Definition~\ref{dfn:representation}(\ref{rel:rho
multiplicative}). Fix $\alpha,\beta \in \Gg_\Lambda$, and fix
$\xi \in X_\alpha$ and $\eta \in X_\beta$. Then for $g \in
\Gg_\Lambda$,
\[
\rho_\alpha(\xi)\rho_\beta(\eta)
    = \sum_{g_1g_2 = g} 1_{Z(\alpha, s(\alpha))}(g_1)1_{Z(\beta,s(\beta))}(g_2)
        i^{g_1}_{\alpha,s(\alpha)}(l_\xi) i^{g_2}_{\beta,s(\beta)}(l_\eta).
\]
The conditions $g_1g_2 = g$, $1_{Z(\alpha, s(\alpha))}(g_1) =
1$ and $1_{Z(\beta,s(\beta))}(g_2) = 1$ combine to force $g =
(\alpha\beta x, d(\alpha\beta), x) \in Z(\alpha\beta,s(\beta))$
for some $x \in s(\beta)\Lambda^\infty$, and $g_1 =
(\alpha\beta x, d(\alpha), \beta x)$ and $g_2 = (\beta x,
d(\beta), x)$; in particular, they force $s(\alpha) =
r(\beta)$, so $\rho_\alpha(\xi)\rho_\beta(\eta) = 0$ if
$s(\alpha) \not= r(\beta)$.

If $s(\alpha) = r(\beta)$, let $g_\alpha := (r(g), d(\alpha),
\sigma^{d(\alpha)}(r(g)))$ and $g_\beta :=
(\sigma^{d(\alpha)}(r(g)), d(\beta), s(g))$. Then we may
continue our calculation:
\begin{align*}
\rho_\alpha(\xi)\rho_\beta(\eta)
    &= 1_{Z(\alpha\beta, s(\beta))}(g)i^{g_\alpha}_{\alpha,s(\alpha)}(l_\xi) i^{g_\beta}_{\beta,s(\beta)}(l_\eta) \\
    &= 1_{Z(\alpha\beta, s(\beta))}(g)i^g_{\alpha\beta,s(\alpha\beta)}\big(i^{\alpha\beta,\beta}_{\alpha, s(\alpha)}(l_\xi) l_\eta\big) \\
    &= 1_{Z(\alpha\beta, s(\beta))}(g)i^g_{\alpha\beta,s(\alpha\beta)}(l_{\chi_{\alpha,\beta}(\xi \otimes \eta)}) \\
    &= \rho_{\alpha\beta}(\chi_{\alpha,\beta}(\xi \otimes \eta)).
\end{align*}

We have next to show that if $d(\alpha) = d(\beta)$, then
$\rho_\alpha(\xi)^* \rho_\beta(\eta) =
\delta_{\alpha,\beta}\pi_{s(\alpha)}(\langle \xi, \eta
\rangle_{A_{s(\alpha)}})$ for all $\xi \in X_\alpha$ and $\eta
\in X_\beta$. Fix such $\alpha$, $\beta$, $\xi$ and $\eta$. For
$g \in \Gg_\Lambda$,
\begin{align*}
\big(\rho_\alpha(\xi)^* \rho_\beta(\eta)\big)(g)
    &= \big((f^{\alpha,s(\alpha)}_\xi)^* f^{\beta,s(\beta)}(\eta)\big)(g) \\
    &= \sum_{g_1g_2 = g} f^{\alpha,s(\alpha)}_\xi(g_1^{-1}) ^* f^{\beta,s(\beta)}_\eta(g_2) \\
    &= \sum_{g_1g_2 = g} 1_{Z(s(\alpha),\alpha)}(g_1)1_{Z(\beta,s(\beta))}(g_2) i^{g_1}_{s(\alpha),\alpha}(l_\xi^*)i^{g_2}_{\beta,s(\beta)}(l_\eta).
\end{align*}
The conditions $g_1g_2 = g$, $ 1_{Z(s(\alpha),\alpha)}(g_1) =
1$ and $1_{Z(\beta,s(\beta))}(g_2) = 1$ combine to force $g =
(x,0,x)$ for some $x \in s(\alpha)\Lambda^\infty$, $g_1 = (x,
-d(\alpha), \alpha x)$ and $g_2 = g^{-1}_1$. In particular, if
$\alpha \not= \beta$, then
$\big(\rho_\alpha(\xi)^* \rho_\beta(\eta)\big)(g) = 0$. Hence,
writing $g_\alpha = (r(g),-d(\alpha), \alpha r(g))$,
\[\begin{split}
\big(\rho_\alpha(\xi)^* \rho_\beta(\eta)\big)(g)
    &= \delta_{\alpha,\beta} 1_{Z(s(\alpha),s(\alpha))}(g) i^{g_\alpha}_{s(\alpha),\alpha}(l_\xi^*) i^{g^{-1}_\alpha}_{\alpha,s(\alpha)}(l_\eta) \\
    &= \delta_{\alpha,\beta} 1_{Z(s(\alpha),s(\alpha))}(g) i^g_{s(\alpha),s(\alpha)}(l_\xi^* l_\eta)
    = \delta_{\alpha,\beta} \pi_{s(\alpha)}(\langle \xi, \eta \rangle_{A_{s(\alpha)}})
\end{split}\]
as required.

It remains to show that $(\rho, \pi)$ is Cuntz-Pimsner
covariant. To do this, we first show that for $\lambda \in
\Lambda$, $T \in \Kk(X_\lambda)$ and $g \in \Gg_\Lambda$, we
have $\rho^{(\lambda)}(T) = 1_{Z(\lambda,\lambda)}(g)
i^g_{\lambda,\lambda}(T)$. By linearity and continuity, it
suffices to consider $T = \theta_{\xi,\eta}$ for some $\xi,\eta
\in X_\lambda$. For this we calculate
\begin{align}
\rho^{(\lambda)}(\theta_{\xi,\eta})
    &= \rho_\lambda(\xi)\rho_\lambda(\eta)^* \nonumber\\
    &= \sum_{g_1g_2 = g} f^{\lambda,s(\lambda)}_\xi(g_1) (f^{\lambda,s(\lambda)}_\eta)^*(g_2) \nonumber\\
    &=\sum_{g_1g_2 = g} f^{\lambda,s(\lambda)}_\xi(g_1) (f^{\lambda,s(\lambda)}_\eta)(g_2^{-1})^* \nonumber\\
    &= \sum_{g_1g_2 = g} 1_{Z(\lambda,s(\lambda))}(g_1) 1_{Z(s(\lambda),\lambda)}(g_2)
        i^{g_1}_{\lambda,s(\lambda)}(l_\xi) i^{g_2}_{s(\lambda),\lambda}(l^*_\eta)\label{eq:rho(theta)}
\end{align}
by~\eqref{eq:stardef}. The conditions $g_1g_2 = g$,
$1_{Z(\lambda,s(\lambda))}(g_1) = 1$ and
$1_{Z(s(\lambda),\lambda)}(g_2) = 1$ combine to force $g =
(\lambda x,0,\lambda x)$ for some $x \in
s(\lambda)\Lambda^\infty$, $g_1 = (\lambda x, d(\lambda), x)$
and $g_2 = g^{-1}_1$. Hence~\eqref{eq:rho(theta)} together with
the characterisation~\eqref{eq:composition in limit} of
multiplication in $E_X$ implies that
\[
\rho^{(\lambda)}(\theta_{\xi,\eta}) = 1_{Z(\lambda,\lambda)}(g)
i^g_{\lambda,\lambda}(\theta_{\xi,\eta})
\]
as claimed.

By Condition~\ref{dfn:Lambda system}(\ref{it:sys theta}), we
have $\phi_\lambda(a) = i^\lambda_{r(\lambda)}(l_a)$ for all $a
\in A_{r(\lambda)}$. Hence
\begin{equation}\label{eq:rho->i}
 \rho^{(\lambda)}(\phi_\lambda(a))
    = 1_{Z(\lambda,\lambda)}(g) i^g_{\lambda,\lambda}(i^{\lambda}_{r(\lambda)}(l_a))
    = 1_{Z(\lambda,\lambda)}(g) i^g_{r(\lambda),r(\lambda)}(l_a).
\end{equation}

We are now ready to establish that $(\rho, \pi)$ is
Cuntz-Pimsner covariant. Indeed, fix $n \in \NN^k$, $v \in
\Lambda^0$ and $a \in A_v$. Observe that $Z(v,v) =
\bigsqcup_{\lambda \in v\Lambda^n} Z(\lambda,\lambda)$. Hence,
for $g \in \Gg_\lambda$, using equation~\eqref{eq:rho->i} to
obtain the first equality, we calculate
\[
\sum_{\lambda \in v\Lambda^n} \rho^{(\lambda)}(\phi_\lambda(a))
    = \sum_{\lambda \in v\Lambda^n} 1_{Z(\lambda,\lambda)}(g) i^g_{r(\lambda),r(\lambda)}(l_a)
    = 1_{Z(v,v)}(g) i^g_{r(\lambda),r(\lambda)}(l_a)
    = f^{v,v}_{l_a}(g)
    = \pi_v(a),
\]
so $(\rho, \pi)$ is Cuntz-Pimsner covariant as required.
\end{proof}

Our second main theorem identifies the cross-sectional algebra
of the Fell bundle with the $C^*$-algebra $C^*(A,X,\chi)$ of
the $\Lambda$-system $X$ (see Section~\ref{sec:systems}).


\begin{thm}\label{thm:isomorphism}
There is an isomorphism $\Psi : C^*(A,X,\chi) \to C^*_r(\Gg_\Lambda, E_X)$
such that under the canonical identifications $A_v = \Kk(X_v,
X_v)$ and $X_\lambda = \Kk(X_{s(\lambda)}, X_\lambda)$, we have
\begin{align*}
\Psi(\pi^X_v(a)) &= f^{v,v}_{l_a}\quad\text{ for all $v\in \Lambda^0$ and $a \in A_v$, and}\\
\Psi(\rho^X_\lambda(x)) &= f^{\lambda, s(\lambda)}_{l_x} \quad\text{ for all $\lambda \in \Lambda$ and $x \in X_\lambda$.}
\end{align*}
\end{thm}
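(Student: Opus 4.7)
The idea is to produce $\Psi$ from the universal property of $C^*(A,X,\chi)$, verify surjectivity directly from the dense subalgebra $\Ee_X$, and obtain injectivity from the gauge-invariant uniqueness theorem (Theorem~\ref{thm:giut}). By Proposition~\ref{prp:repn in Cr(E)}, the assignments $\pi_v(a) = f^{v,v}_{l_a}$ and $\rho_\lambda(x) = f^{\lambda, s(\lambda)}_{l_x}$ form a Cuntz-Pimsner covariant representation of $(A,X,\chi)$ in $C^*_r(\Gg_\Lambda, E_X)$. Applying Corollary~\ref{cor:universal property} immediately yields a homomorphism $\Psi : C^*(A,X,\chi) \to C^*_r(\Gg_\Lambda, E_X)$ with the stated action on generators.

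For surjectivity, by Lemma~\ref{lem:fxdense} it suffices to show that every $f^{\lambda,\mu}_T$ with $s(\lambda) = s(\mu)$ and $T \in \Kk(X_\mu, X_\lambda)$ lies in the image of $\Psi$. For $x \in X_\lambda$ and $y \in X_\mu$, the identity $\theta_{x,y} = l_x l_y^*$ combined with~\eqref{eq:stardef} and Remark~\ref{rmk:product} gives
\[
\Psi\bigl(\rho^X_\lambda(x)\rho^X_\mu(y)^*\bigr)
   = f^{\lambda,s(\lambda)}_{l_x} \cdot f^{s(\mu),\mu}_{l_y^*}
   = f^{\lambda,\mu}_{l_x l_y^*}
   = f^{\lambda,\mu}_{\theta_{x,y}}.
\]
Since $\Kk(X_\mu, X_\lambda)$ is the closed linear span of rank-one operators $\theta_{x,y}$ and $T \mapsto f^{\lambda,\mu}_T$ is isometric by Lemma~\ref{lem:sections} (the $C^*_r$-norm of a section supported on a bisection coincides with its sup norm), the image of $\Psi$ contains every $f^{\lambda,\mu}_T$, hence all of $\Ee_X$, hence all of $C^*_r(\Gg_\Lambda, E_X)$.

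For injectivity, I will invoke Theorem~\ref{thm:giut}, which requires (a) a strongly continuous action $\beta$ of $\TT^k$ on $C^*_r(\Gg_\Lambda, E_X)$ satisfying $\beta_z(\pi_v(a)) = \pi_v(a)$ and $\beta_z(\rho_\lambda(x)) = z^{d(\lambda)}\rho_\lambda(x)$, and (b) injectivity of each $\pi_v$. For~(a), the degree cocycle $c : \Gg_\Lambda \to \ZZ^k$ defined by $c(x,n,y) := n$ is continuous, and the standard construction yields an action on $C_c(\Gg_\Lambda, E_X)$ by $(\beta_z f)(g) := z^{c(g)} f(g)$. That $c$ is a cocycle and each $E_g$ is left untouched makes $\beta_z$ a $*$-automorphism of $C_c(\Gg_\Lambda, E_X)$; it extends to $C^*_r(\Gg_\Lambda, E_X)$ because the same pointwise formula defines a unitary multiplier on the Hilbert $C_0(\Gg_\Lambda^{(0)}, E^{(0)})$-module $L^2(\Gg_\Lambda, E_X)$ that implements $\beta_z$; and it is strongly continuous because of the continuity of $c$ and compact support of elements of $C_c(\Gg_\Lambda, E_X)$. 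Since $f^{\lambda,\mu}_T$ is supported on $Z(\lambda,\mu)$ where $c \equiv d(\lambda) - d(\mu)$, the covariance conditions follow. For~(b), for $a \in A_v$, Lemma~\ref{lem:sections} gives $\|f^{v,v}_{l_a}(g)\| = \|l_a\| = \|a\|$ for each $g \in Z(v,v)$, so $\pi_v(a) = 0$ forces $a = 0$. Theorem~\ref{thm:giut} now yields that $\Psi$ is injective.

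The main technical obstacle is establishing the gauge action $\beta$: although this is standard for cocycles on Fell bundles over étale groupoids, one must verify compatibility with the convolution and involution on $C_c(\Gg_\Lambda, E_X)$, strong continuity, and the extension to the reduced norm via the implementing unitary on $L^2(\Gg_\Lambda, E_X)$. Everything else in the proof is a direct bookkeeping exercise with the formulas from Proposition~\ref{prp:repn in Cr(E)}, Lemma~\ref{lem:sections}, and Remark~\ref{rmk:product}.
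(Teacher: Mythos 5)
Your proposal is correct and follows essentially the same route as the paper's proof: $\Psi$ from Proposition~\ref{prp:repn in Cr(E)} together with the universal property, injectivity via the degree cocycle $c(x,n,y)=n$ and Theorem~\ref{thm:giut}, and surjectivity by reducing to finite-rank $T=\sum l_{x_j}l_{y_j}^*$ and applying Lemma~\ref{lem:fxdense} with the product formula of Remark~\ref{rmk:product}. The only difference is that you spell out the extension of the gauge action to the reduced cross-sectional algebra via a unitary multiplier on $L^2(\Gg_\Lambda,E_X)$, a point the paper simply asserts.
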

\begin{proof}
Let $(\rho,\pi)$ be the Cuntz-Pimsner covariant representation
of $(A,X,\chi)$ in $C^*_r(\Gg_\Lambda, E_X)$ obtained from
Proposition~\ref{prp:repn in Cr(E)}. This gives a homomorphism
\[
\Psi = \Psi_{\rho, \pi} : C^*(A,X,\chi) \to C^*_r(\Gg_\Lambda, E_X)
\]
as in Definition \ref{dfn:C^*(X)} satisfying the given formulae. It remains to
show that $\Psi$ is bijective.

We use Theorem \ref{thm:giut} to prove injectivity. Observe
that for each $v\in \Lambda^0$, the map $\pi_v$ is injective,
so condition Theorem \ref{thm:giut}(\ref{it:giut-injective})
holds.

Define a cocycle $c : \Gg_\Lambda \to \ZZ^k$ by $c(x,n,y):=n$.
Then $c$ is locally constant and therefore continuous. Hence
there is a strongly continuous  action $\beta: \TT^k\to
\Aut(C^*_r(\Gg_\Lambda, E_X))$ determined by $\beta_z(f)(g) =
z^{c(g)} f(g)$. Observe that $\beta_z(f^{\lambda,\mu}_T) =
z^{d(\lambda) - d(\mu)} f^{\lambda,\mu}_T$ for all $T\in
\Kk(X_\mu,X_\lambda)$. In particular, for all $v\in\Lambda^0$
and  $a\in A_v=\Kk(X_v,X_v)$ we have
\[
\beta_z(\pi_v(a))=\beta_z(f^{v,v}_{l_a})=f^{v,v}_{l_a}=\pi_v(a),
\]
and for all $\lambda\in\Lambda$ and $x\in X_\lambda$ we have
\[
\beta_z(\rho_\lambda(x))=\beta_z(f^{\lambda, s(\lambda)}_{l_x})=
z^{d(\lambda)}f^{\lambda, s(\lambda)}_{l_x}=z^{d(\lambda)}\rho_\lambda(x).
\]
Hence Theorem~\ref{thm:giut}(\ref{it:giut-gauge}) is also
satisfied. Thus $\Psi$ is injective.

For surjectivity, it suffices by Lemma~\ref{lem:fxdense} to
show that the set $\Ee_X$ is contained in the range of $\Psi$.
For this it suffices by definition of $\Ee_X$ to show that
$f^{\lambda,\mu}_T$ is in the range of $\Psi$ for all
$\lambda,\mu\in \Lambda$ with $s(\lambda)=s(\mu)$ and all $T\in
\Kk(X_\mu,X_\lambda)$. Fix such $\lambda$, $\mu$ and $T$, and
let $v := s(\lambda)=s(\mu)$. Since the range of $\Psi$ is
closed, we may assume that $T$ is finite rank, that is, $T =
\sum_{j=1}^n \theta_{x_j, y_j}$ where $x_j \in X_\lambda$ and
$y_j \in X_\mu$ for $j = 1, \dots, n$.  We have $T = \sum_{j=1}^n l_{x_j}l^*_{y_j}$.
Hence, by the linearity of the section map $S\mapsto
f^{\lambda,\mu}_S$ and equation (\ref{eq:product formula}), we
have:
\begin{align*}
f^{\lambda,\mu}_T
    &= \sum_{j=1}^n f^{\lambda,\mu}_{l_{x_j}l^*_{y_j}}
            = \sum_{j=1}^n f^{\lambda,v}_{l_{x_j}}  f^{v,\mu}_{l^*_{y_j}}
            = \sum_{j=1}^n f^{\lambda,v}_{l_{x_j}}  (f^{\mu, v}_{l_{y_j}})^* \\
    &=  \sum_{j=1}^n \Psi(\rho^X_\lambda(x_j))\Psi(\rho^X_\lambda(y_j))^*.
\end{align*}
Thus $f^{\lambda,\mu}_T$ is in the range of $\Psi$. This concludes the proof of surjectivity.
\end{proof}

\section{Examples}\label{sec:examples}

\subsection{Higher-rank graph $C^*$-algebras}\label{sec:k-graph alg
example} Let $\Lambda$ be a higher-rank graph. For each $v \in
\Lambda^0$, let $A_v$ be the 1-dimensional $C^*$-algebra $\CC$,
and for each $\lambda \in \Lambda$, let $X_\lambda$ be the
1-dimensional Hilbert space $\CC$ regarded as an
$A_{r(\lambda)}$--$A_{s(\lambda)}$ $C^*$-correspondence. The
multiplication isomorphisms $\chi_{\alpha,\beta}: w\otimes
z\mapsto wz$ determine a $\Lambda$-system $(A,X,\chi)$ of
$C^*$-correspondences. Observe that the product system $Y$ of
$C^*$-correspondences constructed from this example as in
Proposition~\ref{prp:assoc prod sys} is precisely the product
system associated to $\Lambda$
by~\cite[Proposition~3.2]{RS2005}. The $\Lambda$-system
$(A,X,\chi)$ is regular precisely when $\Lambda$ is row-finite
and has no sources, and then $C^*(A,X,\chi)$ is canonically
isomorphic to $C^*(\Lambda)$ by \cite[Theorem~4.2]{RS2005}.

Suppose that $\Lambda$ is indeed row-finite with no sources.
Then the Fell-bundle $E_X$ is the trivial bundle $\Gg_\Lambda
\times \CC$, and its reduced cross-sectional algebra is
 the completion of the image of $C_c(\Gg_\Lambda)$
under the regular representation, that is, the reduced groupoid
$C^*$-algebra $C^*_r(\Gg_\Lambda)$. The isomorphism
$C^*(\Lambda)\cong C^*_r(\Gg_\Lambda)$ of Kumjian and Pask (see
\cite[Corollary 3.5(i)]{KP}) is a special case of Theorem
\ref{thm:isomorphism}.

\subsection{Strong shift equivalent
$C^*$-correspondences.}\label{sec:MPT}

Consider the situation of
Example~\ref{ex:systems}(\ref{it:MPT}). That is, let $A$ and
$B$ be $C^*$-algebras, let $R$ be an $A$--$B$ correspondence,
and let $S$ be a $B$--$A$ correspondence. Suppose that $R$ and
$S$ are each full and nondegenerate, and that the left action
of the coefficient algebra on each is implemented by an
injective homomorphism into the compact operators.
Let $(\Lambda^0,\Lambda^1)$ be the directed graph with vertices ${v,w}$ and edges
$\{e,f\}$ where $s(e) = r(f) = v$ and $r(e) = s(f) = w$. Define
$A_v = A$ and $A_w = B$, $X_e = R$ and $X_f = S$, and for $m
\ge 2$ and $\mu \in \Lambda^m$, define
\[
X_\mu := X_{\mu_1} \otimes_{A_{s(\mu_1)}} X_{\mu_2}
\otimes_{A_{s(\mu_2)}} \cdots \otimes_{s(\mu_{m-1})} X_{\mu_m}.
\]
Define isomorphisms $\chi_{\mu,\nu} : X_\mu
\otimes_{A_{s(\mu)}} X_\nu$ by
\[
\chi_{\mu,\nu}\big((x_{\mu_1} \otimes \cdots \otimes x_{\mu_m}) \otimes (x_{\nu_1} \otimes  \cdots \otimes x_{\nu_n})\big)
    = x_{\mu_1} \otimes \cdots \otimes x_{\mu_m} \otimes x_{\nu_1} \otimes  \cdots \otimes x_{\nu_n}.
\]
The result is a regular $\Lambda$-system $(A,X,\chi)$ of
correspondences. Notice that our notion of regularity implies the one in \cite{MPT}.

Muhly, Pask and Tomforde show in \cite{MPT} that $\Oo_{R
\otimes_B S}$ and $\Oo_{S \otimes_A R}$ are Morita-Rieffel
equivalent.  We can reinterpret this in terms of the
$\Lambda$-system. Indeed, if $1_v, 1_w \in \Mm(C^*(A,X,\chi))$ are
the images of the units of the multiplier algebras of $A_v$ and $A_w$
respectively, then each of $1_v$ and $1_w$ is a full projection, and two
applications of the gauge-invariant uniqueness theorem show
that
\[
1_v C^*(A,X,\chi) 1_v \cong \Oo_{R \otimes_B S}
    \quad\text{ and }\quad
1_w C^*(A,X,\chi) 1_w \cong \Oo_{S \otimes_A R},
\]
so that $1_v C^*(A,X,\chi) 1_w$ implements the desired
Morita-Rieffel equivalence.

The graph $\Lambda$ has precisely two infinite paths, namely
$(ef)^\infty \in v\Lambda^\infty$ and $(fe)^\infty \in w\Lambda^\infty$. By
construction, 
\[
E_{(ef)^\infty} = \varinjlim_n \Kk((R \otimes_B S)^{\otimes n})
    \quad\text{ and }\quad
E_{(fe)^\infty} = \varinjlim_n \Kk((S \otimes_A R)^{\otimes n}),
\]
which are precisely the fixed-point subalgebras of the copies
of $\Oo_{R \otimes_B S}$ and $\Oo_{S \otimes_A R}$ embedded in
$C^*(A,X,\chi)$ as above.

More generally, let $\Lambda$ be the path-category of the
directed graph consisting of a simple cycle of length $n$.
Let $(A,X,\chi)$  be a regular $\Lambda$-system. For any two
vertices $v,w \in \Lambda^0$ let $\mu_{v,w}$ denote the unique
path of minimal length from $w$ to $v$, and for each vertex $v$, let
$\lambda_v$ denote the unique cycle of length $n$ with range
$v$. Proposition~\ref{prp:Ex as corner} implies that the fibre
$E_{x_v}$ is isomorphic to the fixed point
algebra $\Oo_{X_{\lambda_v}}^\gamma$ for the gauge action on
the Cuntz-Pimsner algebra of $X_{\lambda_v}$ (see the final
paragraph of Section~\ref{sec:Pimsner}). Then for distinct $v,w$, the correspondences
$X_{\mu_{v,w}}$ and $X_{\mu_{w,v}}$ become an instance of the
situation considered above, and we obtain a Morita-Rieffel
equivalence
\begin{equation}\label{eq:loop MREs}
\Oo_{X_{\lambda_v}} \cong \Oo_{X_{\mu_{v,w}} \otimes_{A_w} X_{\mu_{w,v}}} \sim
    \Oo_{X_{\mu_{w,v}} \otimes_{A_v} X_{\mu_{v,w}}} \cong \Oo_{X_{\lambda_w}}.
\end{equation}
Indeed, as argued above, each $1_v C^*(A, X, \chi) 1_v \cong
\Oo_{X_{\lambda_v}}$ and the $1_v  C^*(A, X, \chi) 1_w$
implement the Morita-Rieffel equivalences~\eqref{eq:loop MREs}.
For each vertex  $v \in \Lambda^0$ there is a unique infinite path
$x_v=(\lambda_v)^\infty$ such that $v = r(x_v)$.
Recall that we may identify $\Lambda^\infty$ with $\Gg_\Lambda^{(0)}$, and then
for each  $v \in \Lambda^0$  the fibre $E_{x_v}$  is isomorphic to the
fixed-point algebra for the gauge action on
$\Oo_{X_{\lambda_v}}$.

\subsection{$\Gamma$-systems of $k$-morphs.}
Let $\Gamma$ be a row-finite $\ell$-graph with no sources, and let
$W$ be a $\Gamma$-system of $k$-morphs satisfying the technical
assumptions (\maltese) of \cite{KPS2}. Proposition~6.4 of
\cite{KPS2} shows how to associate to each $k$-morph $W_\gamma$
(where $\gamma \in \Gamma$) a
$C^*(\Lambda_{r(\gamma)})$--$C^*(\Lambda_{s(\gamma)})$
correspondence $\Hh(W_\gamma)$. The proof of
\cite[Theorem~6.6]{KPS2} shows how to construct isomorphisms
$\chi_{\mu,\nu} : \Hh(W_\mu) \otimes_{C^*(\Lambda_{s(\mu)})}
\Hh(W_\nu) \to \Hh(W_{\mu\nu})$, and it is routine to verify
using the associativity conditions imposed on the system $W$ of
$k$-morphs that the $\chi_{\mu,\nu}$ satisfy the associativity
condition~(\ref{it:sys assoc}) of Definition~\ref{dfn:Lambda
system}. Let $A_v := C^*(\Lambda_v)$ for all $v \in \Gamma^0$ and
let $X_\gamma := \Hh(W_\gamma)$ for each $\gamma \in \Gamma$.
Then $(A, X, \chi)$ is a $\Gamma$-system of
$C^*$-correspondences.

Let $\Sigma$ be a $\Gamma$-bundle for the $\Gamma$-system of
$k$-morphs. This is a $(k+ \ell)$-graph $\Sigma$ together with a
functor $f : \Sigma \to \Gamma$ called the bundle map such that
the degree $d(f(\lambda))=(d(\lambda)_{k+1}, \dots,
d(\lambda)_{k+ \ell})$  for all $\lambda \in
\Sigma$, each $f^{-1}(v) \cong \Lambda_v$, and each
$f^{-1}(\lambda) \cap \{\mu \in \Sigma : d(\mu)_i = 0\text{ for
$i \le k+1$}\}$ is isomorphic to $X_\lambda$. We claim that
$C^*(\Sigma) \cong C^*(A,X,\chi)$. To see this, fix $\sigma \in
\Sigma$. Factorise $\sigma = y\lambda$ where $d(\lambda)_i = 0$
for $i \ge k+1$ and $d(y)_i = 0$ for $i \le k$. Then we may
identify $\lambda$ with an element of $\Lambda_{s(\gamma)}$ and
$y$ with an element of $W_{f(\sigma)}$. Let $\gamma=f(\sigma)$,
so that $y \in W_\gamma$.

Proposition~6.7 of \cite{KPS2} shows that
\[
X_\gamma = \Hh(W_\gamma) \cong \overline{C_c(W_\gamma) \otimes_{C_0(\Lambda_{s(\gamma)}^0)} C^*(\Lambda_{s(\gamma)})}.
\]
In particular if $\delta_y \in C_c(W_\gamma)$ is the
point-mass, and if $s_\lambda$ denotes the canonical generator
of $C^*(\Lambda_{s(\gamma)})$, 
then
$\delta_y \otimes s_\lambda$ is an element of $X_\gamma$. Let
$(\rho, \pi)$ denote the universal representation of $(A, X,
\chi)$ in $C^*(A,X,\chi)$, and define $t_\sigma \in
C^*(A,X,\gamma)$ by $t_\sigma := \rho_\gamma(\delta_y \otimes
s_\lambda)$. It is routine to check that $\{t_\sigma : \sigma
\in \Sigma\}$ is a Cuntz-Krieger $\Sigma$-family. These
elements generate $C^*(A,X,\chi)$ because the elements
$\delta_y \otimes s_\lambda$ span a dense subspace of each
$X_\gamma$. Hence the universal property of $C^*(\Sigma)$
ensures that there is a surjective homomorphism $\phi :
C^*(\Sigma) \to C^*(A,X,\chi)$ such that $\phi(s_\sigma) =
t_\sigma$ for all $\sigma$. A straightforward application of
the gauge-invariant uniqueness theorem \cite[Theorem~3.4]{KP}
shows that $\phi$ is injective.

For $g = (\alpha x, d(\alpha) - d(\beta), \beta x) \in
\Gg_\Gamma$, the fibre $E_g$ of the Fell-bundle $E$
 can be described using the
isomorphism $\phi : C^*(\Sigma) \to C^*(A,X,\chi)$ as
\[\textstyle
E_g = \clsp\Big(\bigcup_{n \in \NN^k} \{\phi(s_\mu  s^*_\nu) :
        f(\mu) = \alpha x(0,n) \text{ and } f(\nu) = \beta x(0,n)\}\Big).
\]

\subsection{Systems of endomorphisms of a single $C^*$-algebra}

Consider a $C^*$-algebra $A$ and a row-finite $k$-graph
$\Lambda$ with no sources. Suppose that $\lambda \mapsto
\varphi_\lambda$ is a contravariant functor from $\Lambda$ to
$\End_1(A)$, the semigroup  of  approximately unital  endomorphisms
of $A$ regarded as a category with one object $A$.
Assuming each $\varphi_\lambda$ is injective,
we construct a regular $\Lambda$-system $(A,X,\chi)$
as
in Example \ref{ex:systems}(\ref{ex:endo}), where
 $A_v = A$ for  $v \in \Lambda^0$ and  $X_\lambda={_{\varphi_\lambda} A}$ for $\lambda\in\Lambda^1$.
 In this case,  $E_x=\varinjlim(A_{x(n)}, \varphi_{x(m,n)})$
 for $x\in\Lambda^\infty={\mathcal G}_\Lambda^{(0)}$; if all $\varphi_\lambda$ are automorphisms,
 $E_x$ is isomorphic to $A$ for all $x$.

We regard the
$C^*$-algebra $C^*(A,X,\chi)$  as a kind of
crossed-product of $A$ by an action  of $\Lambda$. We consider two special cases (see also section 4.3 in \cite{PWY}).

\begin{example}[Crossed products by $\ZZ^k$]
Consider $k$ commuting automorphisms $\alpha_1,...,\alpha_k$ of a $C^*$-algebra $A$.
For $m \in \mathbb{N}^k$ let $X_m =
{}_{\alpha^m}A$, where $\alpha^m=\alpha_1^{m_1} \cdots \alpha_k^{m_k}$. Then $X = (X_m)$ is a
$T_k$-system of correspondences, where $T_k$ is the $k$-graph
whose path category can be identified with $\mathbb{N}^k$. The
corresponding $C^*$-algebra $C^*(A,X,\chi)$ is isomorphic to the crossed
product $A \rtimes {\mathbb Z}^k$.

In this instance, the groupoid $\Gg_\Lambda$ is isomorphic to
$\ZZ^k$.  Note that $E_0=A$, and, more generally, for $n\in \ZZ^k$ we have $E_n={}_{\alpha^n}A$
with the same multiindex convention as above.
Observe that the Fell bundle $E$ is the Fell bundle over $\ZZ^k$
obtained in the usual fashion from the dual action of $\TT^k$
on the crossed product.
\end{example}

\begin{example}[Cuntz's twisted tensor products]
In \cite{Cu}, Cuntz constructs a twisted tensor product
$A\times_{\mathcal U}{\mathcal O}_n$. Let $A$ be a unital $C^*$-algebra
acting nondegenerately on the Hilbert space $\mathcal H$, let
 ${\mathcal U}=(U_1,...,U_n)$ be a family of commuting unitaries
in ${\mathcal L}({\mathcal H})$ implementing automorphisms
$\alpha_1,...,\alpha_n$ of $A$, and let $S_1,...,S_n$ be the isometries
generating ${\mathcal O}_n$.
Cuntz defines $A\times_{\mathcal U}{\mathcal O}_n$ as the $C^*$-subalgebra
of ${\mathcal L}({\mathcal H})\otimes{\mathcal O}_n $ generated
by $A\otimes 1$ together with $U_1\otimes S_1,...,U_n\otimes
S_n$.
 Notice that we have the relations
\[
(\alpha_i(a)\otimes 1)(U_i\otimes S_i)=(U_i\otimes S_i)(a\otimes 1),\;\text{for}\; a\in A,\; i=1,...,n.
\]

Let $\Lambda$ be the $1$-graph with $\Lambda^0 = \{v\}$ and
$\Lambda^1 = \{e_1,...,e_n\}$. Let $\alpha_1, \dots, \alpha_n$
be any collection of automorphisms of $A$ (we need not assume
that the $\alpha_i$ commute, though this is the case in Cuntz's
setting). Let $A_v := A$,  and for each edge $e_i$ let $X_{e_i}
:= {_{\alpha_i^{-1}} A}$. Let $(A,X,\chi)$ be the corresponding
$\Lambda$-system. The $C^*$-algebra $C^*(A,X,\chi)$ is
generated by $A$ and $n$ isometries $V_1,...,V_n$ with relations
\[
V_i^*V_j=\delta_{ij},\;\; \sum_{i=1}^nV_iV_i^*=1,\;\; \alpha_i(a)V_i=V_ia,\;\; i=1,...,n.
\]
Indeed, for $i\le n$, let $x_i=(0,...,1_A,...,0)\in
\bigoplus_{i=1}^nX_{e_i}$, where the $1_A$ is in position $i$. Then $\langle x_i,
x_j\rangle=\delta_{ij}$ and $x=\sum_{i=1}^nx_i\langle x_i,
x\rangle$ for all $x\in \bigoplus_{i=1}^nX_{e_i}$. Consider the image $V_i$ of $x_i$
in $C^*(A,X,\chi)$. Then $V_i^*V_j=\delta_{ij}$, $\sum_{i=1}^n V_iV_i^*=1$, and
since $a\cdot x_i=x_i\langle x_i, a\cdot x_i\rangle$, we get $aV_i=V_i\alpha_i^{-1}(a)$
or $\alpha_i(a)V_i=V_ia$ for $a\in A$. If the $\alpha_i$ commute, the isomorphism
between $C^*(A,X,\chi)$ and
$A\times_{\mathcal U}{\mathcal O}_n$ is given by
\[
a\mapsto a\otimes 1,\;\; V_i\mapsto U_i\otimes S_i.
\]
Notice that $C^*(A,X,\chi)$ unitally contains an isomorphic
copy of $C^*(\Lambda)={\mathcal O}_n$, and the groupoid
$\Gg_\Lambda$ is the Cuntz groupoid  described in
\cite[Definition~III.2.1]{Renault1980}. As noted above, each fibre of the  Fell bundle
over $\Gg_\Lambda^{(0)}$ is isomorphic to $A$.
\end{example}

\subsection{Ionescu's $C^*$-algebras associated to
Mauldin-Williams graphs}\label{sec:Mauldin-Williams}
Our discussion is based on the class of examples considered
by Ionescu in \cite{Ion}.
Let $\Lambda$ be a $k$-graph (Ionescu considers a finite
directed graph).
Let $\LCHS$ be the category whose
objects are compact metric spaces $T$, and whose
morphisms are contractions. Let $\lambda \mapsto
\varphi_\lambda$ be a covariant functor from $\Lambda$ to
$\LCHS$; since we identify the vertices of $\Lambda$ with its
objects, we have $\varphi_v = \id_{T_v}$ for each $v \in
\Lambda^0$.

For $v \in \Lambda^0$, let $A_v := C(T_v)$, and for $\lambda
\in \Lambda$, let $\varphi^*_\lambda : A_{r(\lambda)} \to
A_{s(\lambda)}$ be the induced map $\varphi^*_\lambda(f) = f
\circ \varphi_\lambda$. Now define $X_\lambda$ to be the
$C^*$-correspondence ${_{\varphi^*_\lambda} A_{s(\lambda)}}$
from $A_{r(\lambda)}$ to $A_{s(\lambda)}$. For each composable
pair $\alpha,\beta$ in $\Lambda$, there is an isomorphism
$\chi_{\alpha,\beta} : X_{\alpha} \otimes_{A_{s(\alpha)}}
X_\beta \to X_{\alpha\beta}$ given by
\[
\chi_{\alpha,\beta}(f \otimes_{A(s(\alpha))} g)(t) = (\varphi^*_\beta(f) g)(t) = f(\varphi_\beta(t))g(t).
\]
Since $T \mapsto C(T)$ and $\varphi \mapsto \varphi^*$
determines a contravariant functor from $\LCHS$ to the category
of $C^*$-algebras with $C^*$-homomorphisms as morphisms, the triple
$(A,X,\chi)$ is a $\Lambda$-system of $C^*$-correspondences.

The $X_\lambda$ are always full and nondegenerate with a left
action by compact operators. The left actions are all injective
 when the $\varphi_\lambda$ are all surjective. In
particular, the system $X$ is regular if $\Lambda$ is
row-finite with no sources, and each $\varphi_\lambda$ is
surjective.

For each element $x \in \Lambda^\infty$, the fibre $E_x$ of the
Fell bundle $E$ is given by
\[
E_x = \varinjlim \Kk(X_{x(0,n)}) = \varinjlim(C(T_{x(n)}), \varphi^*_{x(m,n)})
= C(\varprojlim(T_{x(n)}, \varphi_{x(m,n)})).
\]

In Ionescu's setting, $\Lambda$ is the path category of a finite
directed graph with no sinks or sources and there is a constant
 $c < 1$, such that the contraction constant $c_\lambda$
 of $\varphi_\lambda$ satisfies $c_\lambda \le c$ for
every $\lambda \in \Lambda^1$. As in
Remark~\ref{rmk:graph system}, the functor is then determined by
$\{T_v : v \in \Lambda^0\}$ and $\{\varphi_e : e \in \Lambda^1\}$. Since
$c_\lambda \le c< 1$ for every $\lambda \in \Lambda^1$,  the intersection
$\bigcap^\infty_{n=1} \varphi_{x(0,n)}(T_{x(n)})$ is a
singleton $\{t_x\}$ for any infinite path
$x \in \Lambda^\infty$. By the universal property of projective
limits, it follows that $\varprojlim(T_{x(n)},
\varphi_{x(m,n)})$ is a singleton, so $E_x \cong \CC$.
Unfortunately, the $\varphi_{\lambda}$ will typically not be
surjective in this setting. So  the resulting $\Lambda$-system
will fail to be regular.

Quigg (see \cite{Q}) considers graphs of  $C^*$-correspondences
over row-finite $1$-graphs with no sources with a view towards
generalizing Ionescu's result by working in the category of locally
compact Hausdorff spaces with continuous proper maps.

\subsection{Pinzari, Watatani and Yonetani's 
systems of $C^*$-correspondences}

In section 5.3 of \cite{PWY}, Pinzari, Watatani and Yonetani
study KMS states on the $C^*$-algebra defined using a finite family of
 $C^*$-correspondences. More precisely, let $A_1, \dots, A_n$
be unital simple $C^*$-algebras. Fix a matrix
$\Sigma=(\sigma_{i,j})\in M_n(\{0,1\})$ with no row and no
column identically zero. For each pair $(i,j)$ such that
$\sigma_{i,j}=1$, let $X_{i,j}$ be a full, finite projective
$A_i$-$A_j$ $C^*$-correspondence. Pinzari, Watatani and Yonetani
study the KMS states on the Cuntz-Pimsner algebra of the
$C^*$-correspondence $X := \bigoplus_{\sigma_{i,j} \not= 0}
X_{i,j}$ over $A := \bigoplus^n_{i=1} A_i$.

Let $\Lambda_{\Sigma}$ be the $1$-graph with $\Lambda^0 = \{v_1,
\dots, v_n\}$ and $\Lambda^1 = \{e_{i,j} : \sigma_{i,j} = 1\}$ with
$s(e_{i,j}) = v_j$ and $r(e_{i,j}) = v_i$. As in
Remark~\ref{rmk:graph system}, setting $A_{v_i} := A_i$ and
$X_{e_{i,j}} := X_{i,j}$ determines a regular $\Lambda_{\Sigma}$-system of
$C^*$-correspondences. Our construction of $C^*(A,X,\chi)$ in
Definition~\ref{dfn:C^*(X)} is so that $C^*(A,X,\chi) \cong
\Oo_{\bigoplus X_i}$, the $C^*$-algebra studied by
Pinzari-Watatani-Yonetani.

\subsection{Topological graphs fibred over directed graphs}

Let $\Lambda$ be a  $1$-graph, and fix  locally
compact spaces $T_v$ and $U_e$ for each $v\in \Lambda^0$ and $e\in \Lambda^1$. Suppose there are
local homeomorphisms $\sigma_e:U_e\to T_{s(e)}$ and continuous  maps
$\rho_e:U_e\to T_{r(e)}$ for each $e\in\Lambda^1$. These data define a  $\Lambda$-system
of $C^*$-correspondences, by taking $A_v=C_0(T_v)$ and $X_e$ to be the $A_{r(e)}$--$A_{s(e)}$
$C^*$-correspondence obtained from the
completion
of $C_c(U_e)$, with  inner product  defined by
\[
\langle \xi,\eta\rangle(t)=\sum_{\sigma_e(u)=t}\overline{\xi(u)}\eta(u)
\]
and with right and left multiplications defined by
\[
(f\cdot\xi\cdot g)(u)=f(\rho_e(u))\xi(u)g(\sigma_e(u)),
\]
where $\xi, \eta\in C_c(U_e)$, $f\in C_0(T_{r(e)})$, and
$g\in C_0(T_{s(e)})$. This $\Lambda$-system is regular if $\Lambda$ is row-finite
with no sources, the maps $\sigma_e$ are surjective, and the $\rho_e$ are proper
with dense range.
 By construction, $C^*(A,X,\chi)$ is isomorphic to the $C^*$-algebra of the topological graph
with vertex space $\bigsqcup_{v\in \Lambda^0} T_v$, edge space
$\bigsqcup_{e\in \Lambda^1} U_e$, and source and range maps defined using $\sigma_e$ and $\rho_e$. An important special case of this example is given by
the following.

\subsection{Katsura's realisation of nonunital Kirchberg algebras}

Building on earlier work of Deaconu (see \cite{De1}), Katsura (see \cite{Katsura2008})
constructs the topological graph $\Lambda\times_{n,m}{\mathbb T}$  as above,
with $T_v=U_e=\TT$ for all $v\in \Lambda^0$ and $e\in \Lambda^1$. Given two maps
$n:\Lambda^1\to {\mathbb Z}_+$ and
$m:\Lambda^1\to {\mathbb Z}$, define $\sigma_e:U_e\to T_{s(e)}$ by
$\sigma_e(z)=z^{n(e)}$, and $\rho_e:U_e\to T_{r(e)}$ by
$\rho_e(z)=z^{m(e)}$. He shows that
every nonunital Kirchberg algebra is isomorphic to the
$C^*$-algebra of such a topological graph where  the maps $n,m$
are chosen appropriately (see \cite[Lemmas 4.2~and~4.4 and
Proposition~4.5]{Katsura2008}).
The associated $\Lambda$-system of $C^*$-correspondences  is regular precisely when $\Lambda$
is row-finite with no sources and $m(e)\neq 0$ for all $e\in \Lambda^1$.

\end{document}